\newtheorem{example}{Example}
\newtheorem{remark}{Remark}
\def\eqns#1{\begin{equation*}#1\end{equation*}}
\def\eqnl#1#2{\begin{equation}\label{#1}#2\end{equation}}
\def\eqnsml#1{\begin{multline*}#1\end{multline*}}
\def\eqnlml#1#2{\begin{multline}\label{#1}#2\end{multline}}
\def\eqnsa#1{\begin{subequations}\begin{align*}#1\end{align*}\end{subequations}}
\def\eqnmla#1#2{\begin{subequations}\label{#1}\begin{align}#2\end{align}\end{subequations}}
\def\Bi{\mathrm{Bi}}
\def\Po{\mathrm{Po}}
\def\C{\mathrm{C}}
\def\D{\mathrm{D}}
\def\d{\mathrm{d}}
\def\m{\mathrm{m}}
\def\id{\mathrm{id}}
\def\loss{\mathrm{loss}}
\def\T{\mathrm{T}}
\def\one{\mathbf{1}}
\def\bsb{\bm{b}}
\def\bsd{\bm{d}}
\def\bsD{\bm{D}}
\def\bse{\bm{e}}
\def\bsf{\bm{f}}
\def\bsg{\bm{g}}
\def\bsh{\bm{h}}
\def\bsI{\bm{I}}
\def\bsp{\bm{p}}
\def\bsP{\bm{P}}
\def\bsq{\bm{q}}
\def\bsx{\bm{x}}
\def\bsX{\bm{X}}
\def\bsy{\bm{y}}
\def\bsY{\bm{Y}}
\def\bseta{\bm{\eta}}
\def\bsPi{\bm{\Pi}}
\def\bsrho{\bm{\rho}}
\def\bstheta{\bm{\theta}}
\def\bsTheta{\bm{\Theta}}
\def\calB{\mathcal{B}}
\def\calF{\mathcal{F}}
\def\calN{\mathcal{N}}
\def\calX{\mathcal{X}}
\def\bbE{\mathbb{E}}
\def\bbN{\mathbb{N}}
\def\bbP{\mathbb{P}}
\def\bbR{\mathbb{R}}
\def\bbX{\mathbb{X}}
\def\bbY{\mathbb{Y}}
\DeclareMathOperator{\Sym}{Sym}
\DeclareMathOperator{\Sco}{Sco}
\def\AND{\qquad\mbox{ and }\qquad}
\def\And{,\;}
\def\defeq{\doteq}
\def\given{\,|\,}
\def\Given{\,\big|\,}
\def\ind#1{\one_{#1}}
\def\st{:}
\def\tr{\mathrm{t}}
\title{Identification of multi-object dynamical systems: consistency and Fisher information}%
\author{Jeremie Houssineau%
\thanks{DSAP, National University of Singapore. Email: \href{mailto:stahje@nus.edu.sg}{stahje@nus.edu.sg}}
\and
Sumeetpal S.\ Singh%
\thanks{Department of Engineering, University of Cambridge  and The Alan Turing Institute. Email:~\href{mailto:sss40@cam.ac.uk}{sss40@cam.ac.uk}}
\and
Ajay Jasra%
\thanks{DSAP, National University of Singapore. Email: \href{mailto:staja@nus.edu.sg}{staja@nus.edu.sg}}
}
\begin{document}

\maketitle

\begin{abstract}
Learning the model parameters of a multi-object dynamical system from partial and perturbed observations is a challenging task. Despite recent numerical advancements in learning these parameters, theoretical guarantees are extremely scarce. In this article, we study the identifiability of these parameters and the consistency of the corresponding maximum likelihood estimate (MLE) under assumptions on the different components of the underlying multi-object system. In order to understand the impact of the various sources of observation noise on the ability to learn the model parameters, we study the asymptotic variance of the MLE through the associated Fisher information matrix. For example, we show that specific aspects of the multi-target tracking (MTT) problem such as detection failures and unknown data association lead to a loss of information which is quantified in special cases of interest.
\end{abstract}

\begin{keywords}
Identifiability, Consistency, Fisher Information
\end{keywords}

\begin{AMS}
62F12, 
62B10  
\end{AMS}

\section{Introduction}

A multi-object dynamical system is comprised of an unknown and randomly varying number of objects, each of which is a partially observed Markov process. 
Multi-target tracking refers to the problem of estimating the state of each of these objects from noisy observations that are also corrupted by detection failures and false detections (a.k.a.\ false alarms). This type of problem arises in many different fields such as Systems Biology \cite{Chenouard2014}, Robotics \cite{Mullane2011}, Computer Vision \cite{Okuma2004} or Surveillance \cite{Pailhas2016}. Different formulations of multi-target tracking exist, including extensions of the single-target approach to multiple targets \cite{Blackman1986} as well as formulations based on simple point processes~\cite{Mahler2003}.

One of the main challenges in multi-target tracking is the uncertainty in the \emph{data association}, which refers to the problem of finding the right pairing between targets and recorded observations over time, a task further confounded by the corruption of these observations with false positives and detection failures. Inferentially, multi-target tracking is notoriously difficult to solve as it involves an exponentially growing numbers of possible configurations for the data association. Over the past decade there has been significant advancements towards more practical solutions to this inference problem. Some of these include solutions based on sequential Monte Carlo (SMC) \cite{Vo2005}, hierarchical SMC \cite{Pace2013} or Gaussian mixtures \cite{Vo2006}. 

In this article, both the MTT observation model and the motion model of the constituent individual targets are assumed unknown and are instead parameterised and to be inferred from the data. Although multi-target tracking has been an active research field for decades, questions concerning the identifiability and the consistency of the corresponding model parameter estimates have not received the appropriate attention. In this paper we aim to address this gap and shed some light on this issue. Building on results from the literature on Markov processes (e.g.\ see \cite{Leroux1992,Douc2004}), we prove both identifiability and the consistency of the MLE of the MTT model parameters in \cref{thm:consistencyMLE}. Specifically, as each constituent target of the MTT model is a partially observed Markov process, in \cref{thm:transferIdentifiability} we show that identifiability transfers from single to multiple targets under appropriate assumptions. The practical implications of results regarding identifiability include the understanding of the behaviour of Markov chain Monte Carlo (MCMC) techniques in multi-target tracking \cite{Oh2009,Jiang2015}, which is conditioned by the likelihood ratio between the correct parameter value and all the other possible values. The consistency of the maximum likelihood estimator raises the question of its asymptotic normality and the corresponding variance, which in turns motivates the study of the Fisher information matrix for this class of problems. It is demonstrated in \cref{thm:informationLoss} that there is a strict loss of information in the presence of data association uncertainty or detection failures. We characterise the Fisher information more precisely in specific illustrated cases, e.g.\  we show that when increasing the number of targets there is no gain in the Fisher information for the model parameters which are common to all targets if large uncertainties on the origin of the corresponding observations persist (see \cref{ssec:informationDataAssociation}). The Fisher information matrix is useful in applications such as sensor management \cite{Doucet2002} which aims at optimising the position of the sensor or at finding the best ratio between probability of false alarm and probability of detection.

The proof of identifiability of the MTT model as well as our approach for studying the asymptotic variance of the MLE for the MTT model parameters are original and, to the best of our knowledge, the first of their kind. Consistency of the data association problem in MTT has been studied in \cite{Storlie2011} in the context of the estimation of multiple splitting and merging targets observed without noise over a fixed time interval during which $n$ observations of the multiple targets are made at discrete times. The result in \cite{Storlie2011} is limited to the case where the number $n$ of observation tends to infinity which effectively amounts to saying that targets are observed infinitely many times over a fixed interval which is a scenario not typically encountered in practice. In any case, our theoretical results and proof techniques are entirely different as they pertain to the MTT model parameters and not the data association. Point-process-based theoretical studies of MTT have also been conducted in \cite{DelMoral2013,DelMoral2015} for the stability of specific inference methods.

The structure of the article is as follows: after introducing the required notations and background concepts in \cref{sec:notations} and \cref{sec:background}, the consistency of the maximum likelihood estimator is established along with its asymptotic normality for a large class of multi-object systems in \cref{sec:consistency}. Finally, in order to better understand the effect of the various parameters on the asymptotic variance, the Fisher information matrix is computed for important special cases of multi-object systems in \cref{sec:FisherInformation}. The article concludes in \cref{sec:conclusion}.

\section{Notations}
\label{sec:notations}

All random variables will be defined on the same probability space $(\Omega,\calF,\bbP)$ and the expectation of a random variable $X$ w.r.t.\ the probability measure $\bbP$ is denoted $\bbE[X]$. Probability densities will be denoted by lower-case letters while probability measures will be denoted by capital letter. Similarly, random variables will be denoted with capital letters whereas their realisations will be in lower-case.

The time is indexed by the set $\bbN$ of positive integers and for every time $t \in \bbN$, a finite sequence $\bsy_t$ of $M_t \in \bbN_0 \defeq \bbN \cup \{0\}$ observation points in the observation space $\bbY$ is made available. This space can be assumed to be a subset of the Euclidean space $\bbR^d$ with $d>0$. The sequences of observations of the form $(\bsy_1,\dots,\bsy_n)$ will be denoted $\bsy_{1:n}$. In the standard formulation of multi-target tracking, no more than one observation is associated with a given object at a given time step and, conversely, observations are originated from one object only. 

Objects' states are modelled as elements of a set $\bbX$ which is assumed to be a subset of the Euclidean space $\bbR^{d'}$ with $d' > 0$; usually satisfying $d' \geq d$. They are propagated independently according to a Markov kernel density $f_{\theta}$ from the state space $\bbX$ to itself, which depends on a parameter $\theta$ from a compact set $\Theta$. Densities on $\bbX$ are defined w.r.t.\ a reference measure $\mu$. The true value of the parameter $\theta$ is denoted $\theta^*$. The random variable $X_t$ describing the state at time $t$ only depends on the state $x_{t-1}$ at time $t-1$ as follows
\eqns{
X_t \sim f_{\theta}(\cdot \given x_{t-1}).
}
This transition does not depend on time so that the associated Markov chain is said to be \emph{homogeneous}. The observation process at time $t$ given the state $x_t$ is modelled by
\eqns{
Y_t \sim g_{\theta}(\cdot \given x_t)
}
where $g_{\theta}$ is a likelihood function from $\bbX$ to $\bbY$, also parametrised by $\theta$, so that the observation $Y_t$ at time $t$ is independent from the states and observations at other times. The process $(X_t,Y_t)_{t \geq 1}$ 
is usually referred to as a hidden Markov model (HMM). Its law under the parameter $\theta \in \Theta$ is denoted $\bar{P}_{\theta}$ when initialised with its stationary distribution assuming it exists, and $P_{\theta}(\cdot \given x_0)$ when initialised at $x_0 \in \bbX$.

\section{Background}
\label{sec:background}

The definition of specific properties of Markov chains that will be used in the following sections is given here for completeness. Let $(X_t)_{t \geq 0}$ be a $\bbX$-valued Markov chain with transition density $f$ and let $P(\cdot \given x)$ be the probability measure on $(\bbX^{\bbN_0},\calX^{\otimes \bbN_0})$, where $\calX^{\otimes \bbN_0}$ is the cylinder $\sigma$-algebra on $\bbX^{\bbN_0}$, characterising the chain when initialised at point $x \in \bbX$.  Also, let $\tau_A$ be the return time to a set $A \subseteq \bbX$ defined as $\tau_A = \inf\{ t \geq 1 : X_t \in A \}$.

Consider the following concepts: A set $A \subseteq \bbX$ is said to be \emph{accessible} if $\tau_A < \infty$ has positive probability under $P(\cdot \given x)$ for all $x \in \bbX$. The Markov chain $(X_t)_{t \geq 0}$ is said to be \emph{phi-irreducible} if there exists a density $\phi$ on $\bbX$ such that for any subset $A \subseteq \bbX$, $\int_A \phi(x) \d x > 0$ implies that $A$ is accessible. A set $A \subseteq \bbX$ is said to be \emph{Harris recurrent} if the event $\tau_A < \infty$ happens almost surely under $P(\cdot \given x)$ for all $x \in \bbX$. A phi-irreducible Markov chain is said to be Harris recurrent if any accessible set is Harris recurrent. A density $q$ is called \emph{invariant} if for all $x \in \bbX$ it holds that
\eqns{
q(x) = \int f(x \given x') q(x') \d x'.
}
A phi-irreducible Markov chain is called \emph{positive} if it admits an invariant probability density. More details about these notions expressed in a measure-theoretic formulation can be found in \cite{Meyn2012}. These concepts will be useful when considering the long-time behaviour of the Markov chains involved in multi-target tracking problems.

\section{Consistency of the maximum likelihood estimator}
\label{sec:consistency}

\subsection{The multi-target tracking model}

Throughout this section, the true number of objects in the considered system will be assumed to be fixed and will be denoted by $K^* \in \bbN$. We consider a Markov chain $(\bsX_t)_{t \geq 0}$ in $\bbX^{K^*}$ with components independently evolving via the Markov transition $f_{\theta}$ from $\bbX$ to $\bbX$. Observations at time $t$ are gathered into a vector $\bsy_t$ in the space $\bbY^{\times} \defeq \bigcup_{k \geq 0} \bbY^k$ where $\bbY^0$ is a notation for the set containing the empty sequence only. The observation $\bsy_t$ is a superposition of
\begin{enumerate}[wide]
\item the independent observation of components of $\bsX_t$ via the likelihood $g_{\theta}$ from $\bbX$ to $\bbY$ followed by a Bernoulli thinning with parameter $p_{\D}$ corresponding to detection failure, and
\item false alarms, or \emph{clutter}, generated independently of the object-originated observations and assumed to come from an i.i.d.\ process whose cardinality at each time is Poisson with parameter $\lambda$ and common distribution $P_{\psi}$ which depends on the parameter $\psi$ in a compact set $\Psi$ and which true value is denoted $\psi^*$.
\end{enumerate}
The number of objects $K^*$ is not assumed to be known so that it will also be considered as a parameter of the model. The parameter for the multi-target model is then defined as $\bstheta \defeq [\theta,K,p_{\D},\lambda,\psi]^{\tr} \in \bsTheta \defeq \Theta \times S^{\T} \times (0,1) \times S^{\C} \times \Psi$ where $\tr$ is the vector transposition and where $S^{\T}$ and $S^{\C}$ are compact subsets of $\bbN$ and $(0,\infty)$ respectively, with ``$\T$'' and ``$\C$'' standing for target and clutter respectively. The true parameter $\bstheta^*$ is assumed to be an interior point of $\bsTheta$. Special parameter sets that are not subsets of $\bsTheta$ can also be introduced by fixing one or several parameters to special values, for instance $\bsTheta_{\lambda=0} \defeq \Theta \times S^{\T} \times (0,1)$, $\bsTheta_{p_{\D}=1} \defeq \Theta \times S^{\T} \times S^{\C} \times \Psi$ or $\bsTheta_{\lambda = 0, p_{\D}=1} \defeq \Theta \times S^{\T}$ correspond respectively to cases where the parameters $\lambda$, $p_{\D}$ or both have known values that are outside of their domain of definition in $\bsTheta$. Alternatively, if the value of a parameter is known but inside of its domain of definition, e.g.\ it is known that $K=1$, then the corresponding hyperplane will be expressed as $\bsTheta|_{K=1}$. Although the Poisson distribution is not defined for the parameter $\lambda = 0$, this parameter value is simply assumed to represent the case where there is no false alarm.

The Markov transition $\bsf_{\bstheta}$ associated with the $K$-target process $(\bsX_t)_t$ can simply be expressed as
\eqns{
\bsf_{\bstheta}(\bsx \given \bsx') = \prod_{i = 1}^K f_{\theta}(\bsx_i \given \bsx'_i),
}
for any $\bsx, \bsx' \in \bbX^K$, the likelihood however takes a more sophisticated form so that additional notations are required. Let $\Sym(k)$ be the symmetric group over $k$ letters and $u_k$ be the uniform distribution over $\Sym(k)$, also let $\bsq_{\bstheta}$ be a distribution on $\{0,1\}^K$ characterised by
\eqns{
\bsq_{\bstheta}(\bsd) \defeq p_{\D}^{|\bsd|} (1-p_{\D})^{K - |\bsd|},
}
for any $\bsd \in \{0,1\}^K$. The variable $\bsd$ is such that $\bsd_i = 1$ if and only if target $i$ is detected for any $i \in \{1,\dots,K\}$. The $K$-target likelihood $\bsg_{\bstheta}(\bsy_t \given \bsx_t)$ of the observations $\bsy_t \in \bbY^{\times}$ at time $t$ given the state $\bsx \in \bbX^K$ is characterised by
\eqnlml{eq:targetLikelihood}{
\bsg_{\bstheta}(\bsy_t \given \bsx) \defeq \sum_{\substack{\bsd \in \{0,1\}^K \\ |\bsd| \leq M_t}} \bigg[ \Po_{\lambda}(M_t - |\bsd|) \\
\times \sum_{\sigma \in \Sym(M_t)} \prod_{i = |\bsd|+1}^{M_t} p_{\psi}\big(\bsy_{t,\sigma(i)}\big) \prod_{i = 1}^{|\bsd|} g_{\theta}\big(\bsy_{t,\sigma(i)} \given \bsx_{r(i)}\big) u_{M_t}(\sigma) \bsq_{\bstheta}(\bsd) \bigg],
}
where $\Po_{\lambda}$ denotes the Poisson distribution with parameter $\lambda$, where $|\bsd|$ is the 1-norm of $\bsd$, i.e.\ the number of detected targets and where $r(i)$ is the $i$\textsuperscript{th} detected target that is the integer verifying $|\bsd_{1:r(i)}| = i$. This choice of the likelihood $\bsg_{\bstheta}$ corresponds to a marginalisation over the observation-to-track data association. Note that $|\bsd| \leq K$ for any $\bsd \in \{0,1\}^K$ so that $\bsg_{\bstheta}(\bsy \given \bsx) = 0$ for any $\bsx \in \bbX^K$ if $\lambda = 0$ and if the number of observations in $\bsy$, denoted $\#\bsy$, is strictly greater than $K$. The law of the joint Markov chain $(\bsX_t,\bsY_t)_t$ under the parameter $\bstheta \in \bsTheta$ is denoted $\bar\bsP_{\bstheta}$ when initialised by the stationary distribution and $\bsP_{\bstheta}(\cdot \given \bsx_0)$ when assumed to start at the state $\bsx_0 \in \bbX^K$. The corresponding densities are written accordingly with lower-case letters.

The objective is to study the ratio $\bsp_{\bstheta}(\bsy_{1:n} \given \bsx_0) / \bsp_{\bstheta^*}(\bsy_{1:n} \given \bsx'_0)$ for any $\bsx_0 \in \bbX^K$ and any $\bsx'_0 \in \bbX^{K^*}$. 
The assumptions that are considered for this purpose are detailed in the next section.

\subsection{Assumptions and transferability}

In order to bring a better understanding of multi-object systems as a combination of single-object systems corrupted by clutter, assumptions are primarily made on individuals systems. The properties of multi-object systems will be deduced from these whenever this is possible.
\begin{enumerate}[label=\bfseries A.\arabic*,wide,series=hyp]
\item \label{it:boundTransition} The constants
\eqnl{eq:boundTransition}{
\tau_- = \inf_{\theta \in \Theta} \inf_{(x,x') \in \bbX^2} f_{\theta}(x \given x') \AND \tau_+ = \sup_{\theta \in \Theta} \sup_{(x,x') \in \bbX^2} f_{\theta}(x \given x')
}
satisfy $\tau_- > 0$ and $\tau_+ < \infty$.
\end{enumerate}
The condition on $\tau_-$ in Assumption~\ref{it:boundTransition} ensures that any point of the state space can be reached from any other point in a single time step (otherwise $f_{\theta}(x \given x') = 0$ would hold for at least one pair $(x,x') \in \bbX^2$) while the condition on $\tau_+$ ensures the transition is sufficiently regular when compared to the reference measure $\mu$, i.e.\ the transition should be diffuse (in the sense that there should be no concentration of probability mass on a single point of the state space). Under Assumption~\ref{it:boundTransition} it also holds that
\eqnl{eq:boundTransitionMulti}{
\tau_-^K \leq \bsf_{\bstheta}(\bsx \given \bsx') \leq \tau_+^K
}
for any $\bsx,\bsx' \in \bbX^K$, so that $\bsf_{\bstheta}$ straightforwardly satisfies the same type of conditions as $f_{\theta}$, since $S^{\T}$ is compact and hence $K$ is finite.

Let $\Pi_{\theta}$ be the transition kernel of the joint Markov chain $(X_t,Y_t)_t$ on $\bbX \times \bbY$ defined as $\Pi_{\theta}(x,y \given x',y') = g_{\theta}(y \given x) f_{\theta}(x \given x')$. The property \eqref{eq:boundTransitionMulti} is sufficient to ensure that the joint kernel defined as
\eqns{
\bsPi_{\bstheta}(\bsx,\bsy \given \bsx', \bsy') = \bsg_{\bstheta}(\bsy \given \bsx) \bsf_{\bstheta}(\bsx \given \bsx'),
}
for any $\bsx,\bsx' \in \bbX^K$ and any $\bsy,\bsy' \in \bbY^{\times}$ is positive Harris-recurrent and aperiodic. 

In the next assumption, the expectations $\bar\bbE_{\theta^*}[\cdot]$, $\bbE_{\psi^*}[\cdot]$ and $\bar\bbE_{\bstheta^*}[\cdot]$ are taken with respect to $\bar{P}_{\theta^*}$, $P_{\psi^*}$ and $\bar\bsP_{\bstheta^*}$ respectively, also $\Bi_p^k$ denotes the binomial distribution with success probability $p$ and $k$ trials.

\begin{enumerate}[hyp, resume]
\item \label{it:boundIntLikelihood} The constant
\eqnl{eq:likelihoodFiniteSup}{
\hat{b}^{\T}_+ \defeq \sup_{(\theta,x,y) \in \Theta\times\bbX\times\bbY} g_{\theta}(y \given x)
}
satisfies $\hat{b}^{\T}_+ < \infty$, the target- and clutter-related functions
\eqnsa{
b^{\T}_- : y \mapsto \inf_{\theta \in \Theta} \int g_{\theta}(y \given x) \d x & \AND b^{\T}_+ : y \mapsto \sup_{\theta \in \Theta} \int g_{\theta}(y \given x) \d x, \\
b_-^{\C} : y \mapsto \inf_{\psi} p_{\psi}(y) & \AND b_+^{\C} : y \mapsto \sup_{\psi} p_{\psi}(y),
}
satisfy
\eqnmla{eq:likelihoodConditions_positiveFinite}{
\label{eq:likelihoodConditions_positiveFinite1}
b^{\T}_-(y) &> 0 \AND b^{\T}_+(y) < \infty \\
b^{\C}_-(y) &> 0 \AND b^{\C}_+(y) < \infty
}
for any $y \in \bbY$ as well as
\eqnl{eq:likelihoodConditions}{
\bar\bbE_{\theta^*}[|\log b^{\T}_-(Y)|] < \infty \AND \bbE_{\psi^*}[|\log b^{\C}_-(Y)|] < \infty,
}
and it holds that
\eqnl{eq:logInfConvBiPo}{
\bar\bbE_{\bstheta^*}\big[ \big| \log \inf_{\bstheta \in \bsTheta} \Bi_{p_{\D}}^K * \Po_{\lambda}(\#\bsY) \big| \big] < \infty.
}
\end{enumerate}
Assumption~\ref{it:boundIntLikelihood} ensures that all points of the observation space $\bbY$ can be reached from at least some states in $\bbX$ via \eqref{eq:likelihoodConditions_positiveFinite1} although $g_{\theta}(y \given x) = 0$ might hold for some $(x,y) \in \bbX\times\bbY$. Equation \eqref{eq:likelihoodConditions} will ensure boundedness in the calculations related to identifiability. The upper bound \eqref{eq:likelihoodFiniteSup} of the likelihood function is also assumed to be finite so that no concentration of probability mass is allowed at any point of $\bbX\times\bbY$. It is demonstrated in the following lemma that the upper and lower bounds considered in Assumption~\ref{it:boundIntLikelihood} for a single target and for the clutter common distribution are sufficient to guarantee the same type of result for multiple targets. The proof is in \cref{proof:lem:boundedness}.

\begin{lemma}[Transfer of boundedness]
\label{lem:boundedness}
Under Assumption~\ref{it:boundIntLikelihood}, it holds that the constant
\eqns{
\hat\bsb_+ \defeq \sup_{\bstheta \in \bsTheta} \bigg( \sup_{(\bsx,\bsy) \in \bbX^K \times \bbY^{\times}} \bsg_{\bstheta}(\bsy \given \bsx) \bigg)
}
is finite and that the functions
\eqns{
\bsb_- : \bsy \mapsto \inf_{\bstheta \in \bsTheta} \int \bsg_{\bstheta}(\bsy|\bsx) \d \bsx \AND \bsb_+ : \bsy \mapsto \sup_{\bstheta \in \bsTheta} \int \bsg_{\bstheta}(\bsy|\bsx) \d \bsx,
}
verify $\bsb_-(\bsy) > 0$ and $\bsb_+(\bsy) < \infty$ for any $\bsy \in \bbY^{\times}$ as well as $\bar\bbE_{\bstheta^*}[| \log \bsb_-(\bsY)|] < \infty$.
\end{lemma}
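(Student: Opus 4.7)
The plan is to establish each assertion by direct estimation from the explicit expression \eqref{eq:targetLikelihood}, after first rewriting the weighted permutation sum $u_{M_t}(\sigma)\sum_{\sigma \in \Sym(M_t)}$ as a sum over subsets $I \subseteq \{1,\dots,M_t\}$ of size $k = |\bsd|$ specifying the indices of object-originated observations, together with a further sum over bijections from $I$ onto the detected targets. This recasts $\bsg_{\bstheta}(\bsy|\bsx)$ as an additive decomposition over pairs $(I,\bsd)$ with factors $g_\theta(y_i|\bsx_{r(i)})$ for $i \in I$ and $p_\psi(y_i)$ for $i \notin I$, weighted by $\Po_\lambda(M_t-k)\bsq_{\bstheta}(\bsd)$, so that the target- and clutter-related bounds from Assumption~\ref{it:boundIntLikelihood} enter separately.

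The bound $\hat\bsb_+ < \infty$ then follows by replacing each $g_\theta$ by $\hat{b}^{\T}_+$, observing that the Bernoulli and Poisson weights sum to probabilities, and using compactness of $S^{\T}$ to keep $K$ bounded, modulo the uniform upper bound on $p_\psi$ that the paper's setting seems to presuppose. For the pointwise claims on $\bsb_\pm(\bsy)$, I would integrate $\bsg_{\bstheta}$ over $\bsx \in \bbX^K$ term by term: each detected-target integral of $g_\theta(y_i|\cdot)$ lies in $[b^{\T}_-(y_i), b^{\T}_+(y_i)]$ by \eqref{eq:likelihoodConditions_positiveFinite1}, while each undetected-target coordinate contributes $\mu(\bbX) \leq 1/\tau_-$, finite by Assumption~\ref{it:boundTransition}. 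The upper bound is then a finite sum of pointwise-finite terms; for the lower bound I would retain a single well-chosen term, namely the one with no detected targets (so that all observations are treated as clutter), and use compactness of $\bsTheta$ to keep $p_{\D}$ bounded away from $1$ and $\lambda$ bounded away from $0$ and $\infty$.

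The remaining claim, $\bar\bbE_{\bstheta^*}[|\log \bsb_-(\bsY)|] < \infty$, is where I expect the main obstacle. My strategy is to sharpen the previous lower bound into the form $\bsb_-(\bsy) \geq c \cdot \inf_{\bstheta} \Bi_{p_{\D}}^K * \Po_\lambda(\#\bsy) \cdot \prod_{i=1}^{\#\bsy} \min\{b^{\T}_-(y_i), b^{\C}_-(y_i)\}$ for some $c > 0$, so that the absolute log is controlled by \eqref{eq:logInfConvBiPo} plus a sum of per-observation terms dominated by $|\log b^{\T}_-(Y_i)| + |\log b^{\C}_-(Y_i)|$. Under $\bar\bsP_{\bstheta^*}$ each point of $\bsY$ is either target-originated (expected count $K^* p_{\D}^*$, marginal law $\bar P_{\theta^*}$) or clutter (expected count $\lambda^*$, marginal law $P_{\psi^*}$), so a Wald-type summation combined with the single-observation integrabilities \eqref{eq:likelihoodConditions} and the finite expected cardinality yields the claim. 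The delicate point is the appearance of cross-expectations such as $\bbE_{\psi^*}[|\log b^{\T}_-(Y)|]$ that are not explicitly assumed; I would handle these either by refining the lower bound so each observation contributes only the log-factor matching its own type, or by an implicit tail regularity of the two bounds that the assumption seems to presuppose.
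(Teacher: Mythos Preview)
Your strategy matches the paper's closely: the paper also integrates \eqref{eq:targetLikelihood} term by term and obtains exactly your ``sharpened'' lower bound $\bsb_-(\bsy) \geq c \cdot \inf_{\bstheta} \Bi_{p_{\D}}^K * \Po_\lambda(\#\bsy) \prod_i [b^{\T}_-(y_i) \land b^{\C}_-(y_i)]$, then invokes \eqref{eq:logInfConvBiPo}. Your preliminary single-term bound (retain only the term with no detected targets) is a detour the paper skips: keeping the full sum over $\bsd$ and bounding every factor of every summand below by $b^{\T}_-(y_i) \land b^{\C}_-(y_i)$ collapses the $\bsd$-sum directly into the convolution, giving pointwise positivity and the input to the integrability argument in one stroke. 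Incidentally, you are more careful than the paper in tracking the $\mu(\bbX)^{K-|\bsd|}$ factor from the undetected coordinates, which the paper's proof silently drops.

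The cross-expectation concern you flag is real, and the paper's proof does not address it either: after the min-bound it asserts in one line that \eqref{eq:logInfConvBiPo} gives $\bar\bbE_{\bstheta^*}[|\log \bsb_-(\bsY)|] < \infty$, but the estimate $|\log(b^{\T}_- \land b^{\C}_-)| \leq |\log b^{\T}_-| + |\log b^{\C}_-|$ indeed produces the terms $\bbE_{\psi^*}[|\log b^{\T}_-(Y)|]$ and $\bar\bbE_{\theta^*}[|\log b^{\C}_-(Y)|]$, neither of which is covered by \eqref{eq:likelihoodConditions}. Your first proposed fix (arranging the lower bound so that each observation contributes only the factor matching its true type) cannot work as stated, since $\bsb_-(\bsy)$ depends on $\bsy$ alone and the infimum over $\bstheta$ is taken without access to the latent origin of each point. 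Your second fix is the honest one: finiteness of these cross-expectations is an additional hypothesis that the paper appears to presuppose rather than derive from Assumption~\ref{it:boundIntLikelihood}.
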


An important result that follows from the assumptions introduced so far is the uniform forgetting of the conditional Markov chain: it can be proved under Assumptions~\ref{it:boundTransition} and \ref{it:boundIntLikelihood} that for any $k,l \in \bbN_0$ such that $k \leq l$ and any parameter $\bstheta \in \bsTheta$, it holds that
\eqns{
\int \bigg| \int \bar\bsp_{\bstheta}(\bsx_t \given \bsx_k, \bsy_{k+1:l}) \bsp(\bsx_k) \d \bsx_k - \int \bar\bsp_{\bstheta}(\bsx_t \given \bsx_k, \bsy_{k+1:l}) \bsp'(\bsx_k) \d \bsx_k \bigg| \d \bsx_t \leq \bsrho^{t-k}_{\bstheta},
}
for all $t \geq k$, all probability densities $\bsp$, $\bsp'$ on $\bbX^K$ and all sequences of observations $\bsy_{k+1:l}$, where $\bsrho_{\bstheta} \defeq 1 - (\tau_- / \tau_+)^K$. The $K$-target forgetting rate $\bsrho_{\bstheta}$ will generally be smaller than the single-target rate $1 - \tau_-/\tau_+$, although mixing is still guaranteed since $K$ is finite and hence $\bsrho_{\bstheta} \in [0,1)$. It is also possible to conclude about the pointwise convergence of the log-likelihood function to the function $\ell$ defined on $\bsTheta$ as follows
\eqns{
\ell : \bstheta \mapsto \bar\bbE_{\bstheta^*}\big[ \ell_{\bsY_{-\infty:0}}(\bstheta) \big],
}
where $\ell_{\bsy_{-\infty:0}}$ is defined on $\bsTheta$ for any realisation $\bsy_{-\infty:0}$ of the observation process as
\eqns{
\ell_{\bsy_{-\infty:0}} : \bstheta \mapsto \lim_{m \to \infty} \log \bar\bsp_{\bstheta}(\bsy_0 \given \bsy_{-m:-1}, \bsx_{-m-1}),
}
and this limit does not depend on $\bsx_{-m-1}$. Indeed, under Assumptions~\ref{it:boundTransition} and \ref{it:boundIntLikelihood}, it holds for all $K \in S^{\T}$ and all $\bsx_0 \in \bbX^K$ that
\eqnl{eq:pointwiseConvergence}{
\lim_{n \to \infty}  \dfrac{1}{n} \log \bsp_{\bstheta}(\bsY_{1:n} \given \bsx_0) = \ell(\bstheta), \qquad \bar\bsP_{\bstheta^*}\text{-a.s}.
}
This result shows that for any realisation $\bsy_{1:\infty}$ of the observation process, the empirical average $n^{-1} \log \bsp_{\bstheta}(\bsy_{1:n} \given \bsx_0)$ will converge to $\ell(\bstheta)$ irrespectively of the assumed initial state $\bsx_0$. A continuity assumption is required in order to turn the pointwise convergence result of \eqref{eq:pointwiseConvergence} into a uniform convergence result.
\begin{enumerate}[hyp, resume]
\item \label{it:continuity} For all $x,x' \in \bbX$ and all $y \in \bbY$, the mappings $\theta \mapsto f_{\theta}(x \given x')$, $\theta \mapsto g_{\theta}(y \given x)$ and $\psi \mapsto p_{\psi}(y)$ are continuous.
\end{enumerate}
It follows directly from Assumption~\ref{it:continuity} that for all $K \in S^{\T}$, all $\bsx, \bsx' \in \bbX^K$ and all $\bsy \in \bbY^{\times}$, the mappings $\bstheta \mapsto \bsf_{\bstheta}(\bsx \given \bsx')$ and $\bstheta \mapsto \bsg_{\bstheta}(\bsy \given \bsx)$ are continuous on the hyperplane of $\bsTheta$ made of parameters with a number of targets equal to $K$, since these mappings are sums and products of continuous functions. Although the continuity for the multi-target Markov kernel and likelihood function is limited to hyperplanes, the result of \cite[Lemma~4]{Douc2004} can be extended to: for all $\bstheta \in \bsTheta$
\eqns{
\lim_{\delta \to 0} \bar\bbE_{\bstheta^*}\bigg[ \sup_{|\bstheta' - \bstheta| \leq \delta} \big| \ell_{\bsY_{-\infty:0}}(\bstheta') - \ell_{\bsY_{-\infty:0}}(\bstheta) \big| \bigg] = 0,
}
where $|\cdot|$ is the $1$-norm on $\bsTheta$, since $\bstheta'$ and $\bstheta$ will be in the same hyperplane for $\delta$ small enough. The addition of the continuity assumption enables the derivation of the following result regarding the uniform convergence of the log-likelihood function:
Under Assumptions~\ref{it:boundTransition} to \ref{it:continuity}, it holds that
\eqnl{eq:uniformConvergence}{
\lim_{n \to \infty} \sup_{\bstheta \in \bsTheta} \sup_{\bsx_0 \in \bbX^K} \bigg| \dfrac{1}{n} \log \bsp_{\bstheta}(\bsY_{1:n} \given \bsx_0) - \ell(\bstheta) \bigg| = 0, \qquad \bar\bsP_{\bstheta^*}\text{-a.s}.
}

Since the conditional log-likelihood function $\log \bsp_{\bstheta}(\bsy_{1:n} \given \bsx_0)$ is continuous and uniformly bounded, it follows from \eqref{eq:uniformConvergence} that $\ell$ is also continuous on the hyperplanes of $\bsTheta$ of constant target number.

The following identifiability assumption is considered in order to show the consistency of the maximum likelihood estimator
\begin{enumerate}[hyp, resume]
\item \label{it:identifiability} $\bar{P}_{\theta} = \bar{P}_{\theta^*}$ if and only if $\theta = \theta^*$ and $P_{\psi} = P_{\psi^*}$ if and only if $\psi = \psi^*$
\end{enumerate}

Assumption~\ref{it:identifiability} is fundamental since there would be no chance to discriminate the true value $\theta^*$ among all the other possible $\theta \in \Theta\setminus\{\theta^*\}$ if some of these parameters did yield the same law for the observations. For instance, if the colour of the target is considered as a parameter but if the likelihood of the observations does not depend on this characteristics of the target, e.g.\ if the observations come from a radar, then any $\theta$ obtained by changing the colour in $\theta^*$ would induce a law $\bar{P}_{\theta}$ that is equal to $\bar{P}_{\theta^*}$ and Assumption~\ref{it:identifiability} would not be verified. It is shown in the next theorem that identifiability of the multi-target problem can be deduced from the identifiability of the single-target one under important special cases. The proof is in \cref{proof:thm:transferIdentifiability}.

\begin{theorem}[Transfer of identifiability]
\label{thm:transferIdentifiability}
Under Assumption~\ref{it:identifiability} it holds that
\begin{enumerate}[label=\alph*)]
\item if the true parameter $\bstheta^*$ is in $\bsTheta_{\lambda = 0}$, then it holds that $\bar\bsP_{\bstheta} = \bar\bsP_{\bstheta^*}$ if and only if $\bstheta = \bstheta^*$ for any $\bstheta \in \bsTheta_{\lambda = 0}$,
\item if the true parameter $\bstheta^*$ is in the subset $\bsTheta|_{K=1}$ of $\bsTheta$ made of parameters of the form $(\theta,K,p_{\D},\lambda,\psi)$ with $K = 1$, then it holds that $\bar\bsP_{\bstheta} = \bar\bsP_{\bstheta^*}$ if and only if $\bstheta = \bstheta^*$ for any $\bstheta \in \bsTheta|_{K=1}$.
\end{enumerate}
\end{theorem}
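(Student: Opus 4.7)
My strategy is to analyse the probability generating functional (PGFL) of the observation point process under $\bar\bsP_\bstheta$. For test functions $\phi_1,\dots,\phi_n : \bbY \to [0,1]$, set
\eqns{
G_n^\bstheta[\phi_{1:n}] \defeq \bar\bbE_\bstheta\bigg[\prod_{t=1}^n \prod_{j=1}^{M_t} \phi_t(\bsY_{t,j})\bigg].
}
Conditioning on target trajectories and exploiting the independence of the $K$ stationary single-target chains together with the independence of the Poisson clutter, the PGFL factorises as
\eqns{
G_n^\bstheta[\phi_{1:n}] = \bigg(\bar\bbE_\theta\Big[\prod_{t=1}^n \big(1 - p_\D + p_\D \int g_\theta(y \given X_t)\phi_t(y)\d y\big)\Big]\bigg)^{\!K} \prod_{t=1}^n e^{\lambda(\int p_\psi(y)\phi_t(y)\d y - 1)},
}
where $(X_t)_{t\geq 1}$ denotes a single-target chain started from its stationary distribution. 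Since $\bar\bsP_\bstheta = \bar\bsP_{\bstheta^*}$ forces $G_n^\bstheta = G_n^{\bstheta^*}$ for every $n$ and every $\phi_{1:n}$, the proof reduces to algebraically inverting this factorisation in each of the two regimes.

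For case (a), the clutter factor vanishes ($\lambda = 0$). Taking $n = 1$ and $\phi = \ind{B}$ with $B \subseteq \bbY$ one has $G_1^\bstheta[\ind{B}] = (1 - p_\D(1-\alpha))^K$ where $\alpha = \bbP_\theta(Y_1 \in B) \in [0,1]$, which is the probability generating function of $\Bi(K,p_\D)$ evaluated at $1-\alpha$; its shape uniquely determines $(K,p_\D) \in S^\T \times (0,1)$, giving $K = K^*$ and $p_\D = p_\D^*$. For general $n$, after taking $K$-th roots (licit as both sides lie in $(0,1]$) and using the identity $1 - p_\D + p_\D \int g_\theta(y\given X_t)\phi_t(y)\d y = \int g_\theta(y\given X_t)[(1-p_\D) + p_\D\phi_t(y)]\d y$, the residual identity reads $\bar\bbE_\theta[\prod_t \psi_t(Y_t)] = \bar\bbE_{\theta^*}[\prod_t \psi_t(Y_t)]$ for all $\psi_t$ of the form $(1-p_\D) + p_\D\phi_t$ with $\phi_t : \bbY \to [0,1]$. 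Expanding such products as $\sum_{S}(1-p_\D)^{n-|S|}p_\D^{|S|}\prod_{t\in S}\phi_t(Y_t)$ and inverting the resulting triangular system by Möbius inversion on subsets of $\{1,\dots,n\}$ recovers $\bar\bbE_\theta[\prod_{t\in S}\ind{Y_t \in A_t}]$ for arbitrary $S$ and $A_t \subseteq \bbY$, so the finite-dimensional laws of the single-target observation process agree under $\theta$ and $\theta^*$; Assumption~\ref{it:identifiability} then gives $\theta = \theta^*$.

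For case (b), $K = 1$ and the single-time PGFL is
\eqns{
G_1^\bstheta[\phi] = \big(1 - p_\D + p_\D \textstyle\int h_\theta(y)\phi(y)\d y\big) \exp\big(\lambda\textstyle\int p_\psi(y)\phi(y)\d y - \lambda\big),
}
with $h_\theta(y) \defeq \int g_\theta(y\given x)\bar\pi_\theta(x)\d x$ and $\bar\pi_\theta$ the stationary density of the single-target chain. Taking logarithms and substituting $\phi = 1 + \varepsilon\tilde\phi$ yields the absolutely convergent expansion
\eqns{
\log G_1^\bstheta[1+\varepsilon\tilde\phi] = \varepsilon\bigg(p_\D\!\int h_\theta\tilde\phi\,\d y + \lambda\!\int p_\psi\tilde\phi\,\d y\bigg) + \sum_{k\geq 2}\frac{(-1)^{k+1}}{k}\bigg(\varepsilon p_\D\!\int h_\theta\tilde\phi\,\d y\bigg)^{\!k}.
}
The coefficient at order $\varepsilon^k$ for $k \geq 2$ is a rank-one symmetric $k$-tensor in $\tilde\phi$, so from any $k \geq 2$ (the $k=2$ coefficient, viewed as a bilinear form, has kernel $-p_\D^2 h_\theta(y)h_\theta(y')$) one identifies the non-negative function $p_\D h_\theta$; since $\int h_\theta(y)\d y = 1$, normalisation yields $p_\D$ and $h_\theta$ separately, and subtracting $p_\D h_\theta$ from the linear coefficient gives $\lambda p_\psi$, hence $\lambda$ and $p_\psi$. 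Assumption~\ref{it:identifiability} (clutter part) then delivers $\psi = \psi^*$. With $\lambda$ and $p_\psi$ now identified, the clutter factor in $G_n^\bstheta$ can be divided out for any $n$, and the same Möbius-inversion argument as in case (a) (with $K = 1$) recovers the single-target observation law, whence $\theta = \theta^*$ by Assumption~\ref{it:identifiability}.

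The principal difficulty lies in case (b), where both $p_\D$ and $\lambda$ feed into the single-time observation statistics and in particular both appear in the linear coefficient of the PGFL expansion, so neither can be pinned down from the linear term alone. The key leverage is the algebraic asymmetry between the two effects: the target factor is a logarithm of an affine functional and therefore generates a full rank-one tensor power series, while the Poisson clutter is purely linear, so the higher-order coefficients carry a pure target signature that cannot be mimicked by any choice of clutter parameters.
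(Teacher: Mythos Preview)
Your argument is essentially correct and takes a genuinely different route from the paper. The paper works with a small set of hand-picked probe events: in case~(a) it looks at the law of the observation count $M_t$ to pin down $(K,p_\D)$ via the binomial distribution $\Bi^K_{p_\D}$, and then at symmetric product events $A_t\times\cdots\times A_t$ to recover the single-target observation law through $w^1_\bstheta = p_\D^{Kn}\,\bar P_\theta(Y_1\in A_1,\dots,Y_n\in A_n)^K$; in case~(b) it only records the count law $w^m_\bstheta = (1-p_\D)\Po_\lambda(m)+p_\D\Po_\lambda(m-1)$ and checks that $w^0,w^1,w^2$ cannot simultaneously match unless $(p_\D,\lambda)=(p_\D^*,\lambda^*)$, leaving the identification of $\psi$ and $\theta$ implicit. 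Your PGFL approach is more systematic: the log-expansion in case~(b) exposes the structural reason why target and clutter can be separated --- the Poisson contribution to $\log G_1$ is purely linear while the Bernoulli target contribution is genuinely nonlinear --- and the M\"obius-inversion step cleanly recovers the full single-target observation law in both cases. In that sense your proof of case~(b) is actually more complete than the paper's.

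One technical gap in your case~(a): taking $\phi=\ind{B}$ makes $\alpha=\bbP_\theta(Y_1\in B)$ depend on the unknown~$\theta$, so from $G_1^\bstheta[\ind{B}]=G_1^{\bstheta^*}[\ind{B}]$ you are comparing $(1-p_\D(1-\alpha))^K$ at $\alpha=\alpha_\theta(B)$ against $(1-p_\D^*(1-\alpha^*))^{K^*}$ at $\alpha^*=\alpha_{\theta^*}(B)$, with no a-priori link between $\alpha$ and $\alpha^*$; the two curves are being sampled at \emph{different} and unknown arguments, so the ``shape'' of the binomial PGF is not directly observed. The obvious fix is to take $\phi\equiv c$ constant, for which $\int g_\theta(y\given X_t)\,c\,\d y=c$ regardless of $\theta$, yielding $G_1^\bstheta[c]=(1-p_\D+p_\D c)^K$ for all $c\in[0,1]$ and hence the full binomial PGF, from which $(K,p_\D)=(K^*,p_\D^*)$ follows.
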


It is more challenging to prove that identifiability transfers to the whole parameter set $\bsTheta$ and this property is assumed to hold rather than demonstrated.
\begin{enumerate}[hyp, resume]
\item \label{it:MultiTargetIdentifiability} $\bar\bsP_{\bstheta} = \bar\bsP_{\bstheta^*}$ if and only if $\bstheta = \bstheta^*$.
\end{enumerate}

Assumption~\ref{it:MultiTargetIdentifiability} is not a stringent condition since \cref{thm:transferIdentifiability} shows that the single-target identifiability is sufficient to ensure multi-target identifiability in some important special cases. Moreover, if there exists a $\bstheta \neq \bstheta^*$ such that $\bar\bsP_{\bstheta} = \bar\bsP_{\bstheta^*}$ then $\bstheta$ has to satisfy very specific equations including
\eqns{
\Bi_{p_{\D}}^K * \Po_{\lambda} = \Bi_{p_{\D}^*}^{K^*} * \Po_{\lambda^*}.
}
Assumption~\ref{it:MultiTargetIdentifiability} would not hold for $p^*_{\D} = 0$ since identifiability w.r.t.\ $\theta^*$ and $K^*$ would clearly be lost in this case because of the absence of observations from the targets. The same remark can be made about $K^* = 0$ for the identifiability w.r.t.\ $\theta^*$ since there is obviously no way to learn about the dynamics and observation of the targets if none of them is present.

The different assumptions considered here are combined in the next section in order to prove the consistency of the maximum likelihood estimator.

\subsection{Consistency and asymptotic normality}

As a consequence of \eqref{eq:pointwiseConvergence} and by the dominated convergence theorem it holds that for any $\bstheta \in \bsTheta$, any infinite observation sequence $\bsy_{1:\infty}$ and any initial states $\bsx_0 \in \bbX^K$ and $\bsx'_0 \in \bbX^{K^*}$
\eqnmla{eq:limitRatio}{
\lim_{n\to\infty} \dfrac{1}{n} \log \dfrac{\bsp_{\bstheta}(\bsy_{1:n} \given \bsx_0)}{\bsp_{\bstheta^*}(\bsy_{1:n} \given \bsx'_0)} & = \ell(\bstheta) - \ell(\bstheta^*) \\
& = \lim_{m \to \infty} \bar\bbE_{\bstheta^*}\bigg[ \bar\bbE_{\bstheta^*}\bigg[ \log\dfrac{\bar\bsp_{\bstheta}(\bsY_0 \given \bsY_{-m:-1})}{\bar\bsp_{\bstheta^*}(\bsY_0 \given \bsY_{-m:-1})} \bigg| \bsY_{-m:-1} \bigg] \bigg] \leq 0,
}
where the inequality holds since the conditional expectations are Kullback-Leibler divergences. Yet, it could happen that some $\bstheta \in \bsTheta$ would verify $\bar\bsp_{\bstheta}(\bsY_0 \given \bsy_{-m:-1}) = \bar\bsp_{\bstheta^*}(\bsY_0 \given \bsy_{-m:-1})$ $\bar\bsP_{\bstheta^*}$-almost surely for all $m \in \bbN_0$ and for all $\bsy_{-m:-1}$, which would compromise identifiability. However, Assumption~\ref{it:MultiTargetIdentifiability} is equivalent to $\bstheta = \bstheta^*$ if and only if
\eqns{
\bar\bbE_{\bstheta^*}\bigg[ \log \dfrac{\bar\bsp_{\bstheta}(\bsY_{1:n})}{\bar\bsp_{\bstheta^*}(\bsY_{1:n})} \bigg] = 0, \qquad \forall n \geq 1.
}
The objective is to show that this, in turn, is equivalent to $\bstheta = \bstheta^*$ if and only if $\ell(\bstheta) - \ell(\bstheta^*) = 0$ since this is the term that appears in \eqref{eq:limitRatio}. Following the same line of arguments as \cite[Proposition~3]{Douc2004}, we find that under Assumptions~\ref{it:boundTransition} to \ref{it:continuity} and \ref{it:MultiTargetIdentifiability}, it holds that  $\ell(\bstheta) = \ell(\bstheta^*)$ if and only if $\bstheta = \bstheta^*$, from which we conclude that the considered approach allows for studying the identifiability of $\bstheta^*$. Applying the strict Jensen inequality to the conditional expectation in the r.h.s.\ of \eqref{eq:limitRatio}, it indeed follows that
\eqns{
\lim_{n\to\infty} \dfrac{1}{n} \log\dfrac{\bsp_{\bstheta}(\bsy_{1:n} \given \bsx_0)}{\bsp_{\bstheta^*}(\bsy_{1:n} \given \bsx'_0)} < 0,
}
for any $\bstheta \neq \bstheta^*$, which implies that the likelihood of the observation sequence $\bsy_{1:n}$ under the parameter $\bstheta$ decreases exponentially fast when compared to the likelihood under $\bstheta^*$, irrespectively of the assumed initial states $\bsx_0$ and $\bsx'_0$. Denoting $\hat\bstheta_{n,\bsx_0}$ the argument of the maximum of $\log \bsp_{\bstheta}(\bsy_{1:n} \given \bsx_0)$, the consistency of the maximum likelihood estimator can be expressed as in \cref{thm:consistencyMLE} below. This theorem also states the asymptotic normality of the estimator which makes use of the Fisher information. The latter involves differentiation with respect to the parameter $\bstheta$, however since the number of target $K$ is a natural number, differentiations has to be performed for a fixed $K$. This is what is understood by default when writing $\nabla_{\bstheta}$. Under assumptions, the Fisher information matrix can be expressed as
\eqns{
\bsI(\bstheta^*) = \lim_{n \to \infty} \dfrac{1}{n} \bar\bbE_{\bstheta^*} \Big[ \nabla_{\bstheta} \log \bar\bsp_{\bstheta^*}(\bsY_{1:n}) \cdot \nabla_{\bstheta} \log \bar\bsp_{\bstheta^*}(\bsY_{1:n})^{\tr} \Big],
}
where $\cdot^{\tr}$ is the matrix transposition.

\begin{theorem}
\label{thm:consistencyMLE}
Under Assumptions~\ref{it:boundTransition} to \ref{it:continuity} and \ref{it:MultiTargetIdentifiability}, it holds that
\eqns{
\lim_{n \to \infty} \hat\bstheta_{n,\bsx_0} = \bstheta^*
}
for any $\bsx_0 \in \bbX^K$ with $K \in \bbN$. Considering additionally Assumptions~\ref{it:asymptoticNormality1} to \ref{it:asymptoticNormality3} (see \cref{sec:assumptionAsymptoticNormality}) and assuming that $\bsI(\bstheta^*)$ is positive definite, it holds that
\eqns{
\sqrt{n} (\hat\bstheta_{n,\bsx_0} - \bstheta^*) \to \calN\big(0, \bsI(\bstheta^*)^{-1}\big),
}
for any $\bsx_0 \in \bbX^K$ and any $K \in \bbN$, where $\to$ denotes the convergence in distribution as $n$ tends to infinity and where $\calN(0,V)$ is the normal distribution with mean $0$ and variance $V$.
\end{theorem}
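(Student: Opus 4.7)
The plan is to combine the uniform convergence result \eqref{eq:uniformConvergence} of the normalised log-likelihood with the fact, asserted just before the theorem statement, that $\ell(\bstheta)=\ell(\bstheta^*)$ if and only if $\bstheta=\bstheta^*$, in order to run a standard $M$-estimator argument. The novelty relative to the classical HMM case is that the parameter set $\bsTheta$ contains the discrete coordinate $K\in S^{\T}$. I would first decompose the problem over the finitely many hyperplanes $\bsTheta|_{K=k}$, $k\in S^{\T}$, handle consistency within each hyperplane by classical arguments, and then aggregate using the finiteness of $S^{\T}$.

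For the consistency statement, fix $k\in S^{\T}$. On $\bsTheta|_{K=k}$ the limit $\ell$ is continuous (continuity of $\bsf_{\bstheta}$ and $\bsg_{\bstheta}$ holds within each hyperplane and transfers to $\ell$ through the uniform convergence), and by identifiability (Assumption~\ref{it:MultiTargetIdentifiability}) it attains its supremum over $\bsTheta$ only at $\bstheta^*$. Thus for any open neighbourhood $U$ of $\bstheta^*$ and each $k\in S^{\T}$ there is $\epsilon_k>0$ such that
\begin{equation*}
\sup_{\bstheta\in (\bsTheta|_{K=k})\setminus U}\ell(\bstheta) \leq \ell(\bstheta^*) - \epsilon_k.
\end{equation*}
Combining with \eqref{eq:uniformConvergence}, taking the minimum over the finite set $S^{\T}$, and noting that the bound is uniform in the initial state $\bsx_0\in\bbX^K$ yields that, $\bar\bsP_{\bstheta^*}$-almost surely, $\hat\bstheta_{n,\bsx_0}\in U$ for $n$ large, which is exactly the claimed convergence.

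For asymptotic normality, consistency first forces $\hat K_n=K^*$ for $n$ large, so the problem reduces to inference on the smooth hyperplane $\bsTheta|_{K=K^*}$ on which $\nabla_{\bstheta}$ is well-defined. I would then run the classical Taylor argument: write
\begin{equation*}
0 = \nabla_{\bstheta}\log \bsp_{\hat\bstheta_{n,\bsx_0}}(\bsY_{1:n}\given\bsx_0) = \nabla_{\bstheta}\log \bsp_{\bstheta^*}(\bsY_{1:n}\given\bsx_0) + \bar\bsH_n\,(\hat\bstheta_{n,\bsx_0}-\bstheta^*),
\end{equation*}
with $\bar\bsH_n$ an integrated Hessian along the segment between $\hat\bstheta_{n,\bsx_0}$ and $\bstheta^*$, and conclude once I establish (i) a CLT $n^{-1/2}\nabla_{\bstheta}\log \bsp_{\bstheta^*}(\bsY_{1:n}\given\bsx_0)\to\calN(0,\bsI(\bstheta^*))$ and (ii) a uniform LLN $-n^{-1}\nabla^2_{\bstheta}\log\bsp_{\bstheta}(\bsY_{1:n}\given\bsx_0)\to\bsI(\bstheta^*)$ on a shrinking neighbourhood of $\bstheta^*$. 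Following the route of \cite{Douc2004}, both follow from approximating the normalised score by a stationary ergodic martingale increment sequence (to which the martingale CLT applies) and establishing an $L^1$ ergodic limit for the normalised Hessian. The engine driving these approximations is the uniform forgetting at rate $\bsrho_{\bstheta}<1$ stated before \eqref{eq:pointwiseConvergence}, together with the moment and smoothness controls provided by Assumptions~\ref{it:asymptoticNormality1}--\ref{it:asymptoticNormality3}.

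The main technical obstacle lies in the combinatorial structure of the MTT likelihood \eqref{eq:targetLikelihood}. Unlike a classical HMM observation density, $\bsg_{\bstheta}$ is a sum over data-association permutations weighted by detection patterns and clutter probabilities, so its logarithmic derivatives do not split across targets and are awkward to bound. I would therefore need to show that the single-target bounds in Assumption~\ref{it:boundIntLikelihood} and the extra smoothness in Assumptions~\ref{it:asymptoticNormality1}--\ref{it:asymptoticNormality3} transfer to uniform bounds on $\nabla_{\bstheta}\log\bsg_{\bstheta}$ and $\nabla^2_{\bstheta}\log\bsg_{\bstheta}$, in the same spirit as \cref{lem:boundedness}. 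Once such transferred bounds are available, the two-sided bound \eqref{eq:boundTransitionMulti} on $\bsf_{\bstheta}$ places the reduced problem squarely within the scope of \cite{Douc2004}, whose arguments for the score CLT and Hessian ergodic theorem then apply essentially verbatim on $\bsTheta|_{K=K^*}$.
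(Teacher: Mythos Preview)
Your approach is correct and coincides with the paper's: the proof of \cref{thm:consistencyMLE} amounts to verifying, via \cref{lem:boundedness} and the discussion leading to \eqref{eq:uniformConvergence}, that the multi-target HMM $(\bsX_t,\bsY_t)_t$ satisfies the hypotheses of \cite{Douc2004}, after which \cite[Theorems~1 and~4]{Douc2004} give consistency and asymptotic normality directly, with the discrete coordinate $K$ handled exactly by the finite-hyperplane argument you describe. One clarification: the ``main technical obstacle'' you anticipate does not in fact arise, because Assumptions~\ref{it:asymptoticNormality1}--\ref{it:asymptoticNormality3} are stated \emph{directly} for the multi-target objects $\bsf_{\bstheta}$ and $\bsg_{\bstheta}$ (not for $f_{\theta}$, $g_{\theta}$), so no additional transfer lemma for $\nabla_{\bstheta}\log\bsg_{\bstheta}$ or $\nabla^2_{\bstheta}\log\bsg_{\bstheta}$ is needed---once \cref{lem:boundedness} has placed the model in the framework of \cite{Douc2004}, the remaining smoothness and moment conditions are assumed outright.
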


The proof of \cref{thm:consistencyMLE} follows from \cref{lem:boundedness} combined with \cite[Theorems~1 and 4]{Douc2004}. It can be demonstrated that the result of \cref{thm:consistencyMLE} also holds for the special parameter sets $\bsTheta_{\lambda=0}$, $\bsTheta_{p_{\D}=1}$ and $\bsTheta_{\lambda = 0, p_{\D}=1}$. These special-parameter sets will be used to understand the behaviour of the Fisher information matrix in simple cases in the next section.

\section{Analysis of the Fisher information}
\label{sec:FisherInformation}

\Cref{thm:consistencyMLE} guarantees the convergence of the maximum likelihood estimator under certain conditions and proves the asymptotic normality of the estimator, the variance of the latter being the inverse of the Fisher information matrix. It is therefore of interest to understand how the Fisher information behaves in different multi-target configurations.

This section is structured as follows: an equivalent observation model for which the Fisher information matrix is easier to study is introduced in \cref{ssec:atlObservationModel} and yields a characterisation of the configurations in which the information loss induced by data association uncertainty and detection failures is strictly positive. Qualitative estimates of the information loss are then obtained when isolating the different sources of loss from \cref{ssec:falseAlarm} to \cref{ssec:detectionFailure}. Each of these qualitative estimates are confirmed by numerical results on simulated data obtained by direct Monte Carlo integration of the original expression of the Fisher information, so as to confirm the validity of the derived alternative expressions.

Henceforth, if $A$ and $B$ are two square matrices of the same dimensions then $A \geq B$ is understood as $A - B \geq 0$, i.e.\ $A-B$ is positive semi-definite, and $A > B$ stand for $A - B > 0$, i.e.\ $A-B$ is positive definite.

\begin{example}
\label{ex:simpleFisher}
Assuming that $\bstheta^*$ is in $\bsTheta_{p_{\D}=1}$ and that the data association is known, the joint probability of the observations becomes
\eqnsml{
\bar\bsp_{\bstheta^*}(\bsy_{1:n}) = \prod_{t = 1}^n \bigg[ \Po_{\lambda^*}(M_t - K^*) \prod_{i = K^*+1}^{M_t} p_{\psi^*}\big(\bsy_{t,i}\big) \bigg] \\
\times \int \pi^{\times K^*}_{\theta^*}(\bsx_0) \prod_{t = 1}^n \prod_{i = 1}^{K^*} \Big[ g_{\theta^*}(\bsy_{t,i} \given \bsx_{t,i}) f_{\theta^*}(\bsx_{t,i} \given \bsx_{t-1,i}) \Big] \d\bsx_{0:n}.
}
The score is then found to be
\eqns{
\nabla_{\bstheta} \log \bar\bsp_{\bstheta^*}(\bsy_{1:n}) = \sum_{i=1}^{K^*} \nabla_{\bstheta} \log \bar{p}_{\theta^*}(\bsy_{1:n,i}) + \sum_{t=1}^n \bigg[ \dfrac{M_t - K^*}{\lambda^*} -1 + \sum_{i=K^*+1}^{M_t} \nabla_{\bstheta} \log p_{\psi^*}(\bsy_{t,i}) \bigg]
}
so that, because of the independence between the targets and clutter,
\eqns{
\bsI(\bstheta^*) = K^* I(\bstheta^*) + \dfrac{1}{\lambda^*} + \lambda^* I^{\C}(\bstheta^*),
}
with $I(\bstheta^*)$ and $I^{\C}(\bstheta^*)$ the Fisher information for the distribution of one target and one clutter point respectively, where the gradient is taken w.r.t.\ $\bstheta$, that is
\eqnsa{
I(\bstheta^*) & = \lim_{n \to \infty} \dfrac{1}{n} \bar\bbE_{\theta^*} \Big[ \nabla_{\bstheta} \log \bar{p}_{\theta^*}(Y_{1:n}) \cdot \nabla_{\bstheta} \log \bar{p}_{\theta^*}(Y_{1:n})^{\tr} \Big] \\
I^{\C}(\bstheta^*) & = \bbE_{\psi^*} \Big[ \nabla_{\bstheta} \log p_{\psi^*}(Y) \cdot \nabla_{\bstheta} \log p_{\psi^*}(Y)^{\tr} \Big].
}
\end{example}

In spite of its simplicity, \cref{ex:simpleFisher} yields important remarks: unsurprisingly, if there is no missing information and no data association uncertainty, the information increases with the number of targets. Similarly, if the Fisher information of the clutter distribution $p_{\psi^*}$ increases, then the overall information increases too. The interpretation for the Poisson parameter $\lambda^*$ is less straightforward, the main objective is however to study the Fisher information w.r.t.\ the targets rather than the false alarms so that it is of interest to compute the score without differentiating with respect to $\psi$ or $\lambda$.

Although the Fisher information becomes more difficult to compute when $p_{\D}^* \in (0,1)$, some conclusions can be drawn by focusing on the cardinality. Since the parameter $\theta$ does not affect the cardinality, only the term
\eqns{
\bbE[\nabla_{p_{\D}} \log \bsq_{\bstheta^*}(\bsD) \cdot \nabla_{p_{\D}} \log \bsq_{\bstheta^*}(\bsD)^{\tr}] = K^*/(p^*_{\D}(1-p^*_{\D}))
}
remains when computing the Fisher information matrix, with $\bsD$ the random variable induced by $\bsY$ on $\{0,1\}^{K^*}$. This term is minimal when $p^*_{\D} = 0.5$ and increases when $p^*_{\D}$ goes toward $0$ or $1$. This is not sufficient to conclude since the fact that information is lost when detection failures happen is not taken into account in the cardinality and the information is the same for, e.g.\ $p^*_{\D}$ equal to $0.99$ or $0.01$. Indeed, it is equally easy to estimate $p^*_{\D}$ when an observation is always or never received. For this reason, it is useful to consider the information w.r.t.\ $\theta^*$ only.

The objective will therefore be to characterise how the Fisher information
\eqnl{eq:FisherInfoTheta}{
\bsI(\theta^*) = \lim_{n \to \infty} \dfrac{1}{n} \bar\bbE_{\bstheta^*} \Big[ \nabla_{\theta} \log \bar\bsp_{\bstheta^*}(\bsY_{1:n}) \cdot \nabla_{\theta} \log \bar\bsp_{\bstheta^*}(\bsY_{1:n})^{\tr} \Big],
}
of a multi-object dynamical system behaves when compared to the information of the unperturbed system that excludes false alarms, detection failures and for which data association is known. We refer to the difference between \eqref{eq:FisherInfoTheta} and the latter as the \emph{information loss}. Since the Fisher information of the unperturbed system is a quantity that depends on the number of objects in the system, the aim is to express the information loss as a function of the single-object Fisher information matrix $I(\theta^*)$. The Fisher information matrix of the unperturbed multi-object system is clearly equal to $K^* I(\theta^*)$ because of the independence between the targets' observation in the absence of data association uncertainty. In order to compute $\bsI(\theta^*)$, we have to take the logarithm of the probability density function
\eqns{
\bar\bsp_{\bstheta^*}(\bsy_{1:n}, \bsx_{0:n}) = \pi^{\times K^*}_{\theta^*}(\bsx_0) \prod_{t=1}^n \big[ \bsg_{\theta^*}(\bsy_t \given \bsx_t ) \bsf_{\theta^*}(\bsx_t \given \bsx_{t-1}) \big].
}
However, the presence of a sum in the term $\bsg_{\bstheta^*}(\bsy_t \given \bsx_t )$ prevents from further analysing the Fisher information in a general setting. To avoid directly dealing with these sums, an equivalent observation model which depends explicitly on the assignment is introduced in the next section. This observation model is an important contribution since it allows us to understand the behaviour of the Fisher information for multi-target tracking.

\subsection{Alternative observation model}
\label{ssec:atlObservationModel}

Let $d_H$ be the Hamming metric on the symmetric group $\Sym(k)$ characterised by letting $d_H(\sigma,\sigma')$ be the number of points moved by $\sigma' \circ \sigma^{-1}$ for any given $\sigma,\sigma' \in \Sym(k)$. For instance, if $k=5$ and if $\sigma$ and $\sigma'$ are given in Cauchy's two-line notation as
\eqns{
\sigma =
\begin{pmatrix}
1 & 2 & 3 & 4 & 5 \\
1 & 5 & 2 & 4 & 3
\end{pmatrix}
\AND
\sigma' =
\begin{pmatrix}
1 & 2 & 3 & 4 & 5 \\
1 & 3 & 5 & 4 & 2
\end{pmatrix}
}
then $d_H(\sigma,\sigma') = 3$ since $\sigma' \circ \sigma^{-1}(i) \neq i$ if and only if $i$ is in the set $\{2,3,5\}$. Let $\oplus$ be the vector concatenation operator such that if $\bsy = [y_1,\dots,y_n]^{\tr} \in \bbY^n$ and $\bsy' = [y'_1,\dots,y'_m]^{\tr} \in \bbY^m$ then
\eqns{
\bsy\oplus\bsy' \defeq [y_1,\dots,y_n,y'_1,\dots,y'_m]^{\tr} \in \bbY^{n+m}.
}
Let $R_{\bsd}$ be the matrix of size $|\bsd| \times K^*$ such that $(R_{\bsd})_{i,j} = \delta_{j,r(i)}$ for any $\bsd \in \{0,1\}^{K^*}$, i.e.\ $R_{\bsd}$ has as many lines as there are detected targets and can be seen as a mask matrix that removes the observations of non-detected ones. Let $S_{\sigma}$ be the permutation matrix corresponding to $\sigma \in \Sym(k)$ for any $k \geq 1$, i.e.\ the matrix defined as
\eqns{
S_{\sigma} \defeq \begin{bmatrix} \bse_{\sigma(1)} \\ \vdots \\ \bse_{\sigma(K^*)} \end{bmatrix},
}
with $\bse_i$ the row vector with $1$ at the $i$\textsuperscript{th} position and $0$ elsewhere. The observation model with known data association is written as
\eqnl{eq:unperturbedModel}{
\bsY_t = \bsh(\bsX_t) + \bseta
}
with $\bsh$ and $\bseta$ the multi-target observation function and the observation noise respectively, where $\bseta$ is i.i.d.\ across its $K^*$ components. The false alarms are defined as a random variable $\hat\bsY$ in $\bbY^{\times}$, independent of $\bsY_t$, such that $\hat\bsY_i \sim p_{\psi}$ and $\hat\bsY_i$ is independent of $\hat\bsY_j$ for any $1 \leq i,j \leq \#\hat\bsY$. The observation model of interest can then be defined for given integers $\alpha > 0$ and $0 \leq \beta \leq K^*$ as
\eqnl{eq:perturbedModel}{
\bsY^{\alpha,\beta}_t = S_{\varsigma} \big( (R_{\bsD} \bsY_t) \oplus \hat\bsY \big),
}
where $\bsD$ is a random element of $B_{\beta} \defeq \{ \bsd' \in \{0,1\}^{K^*} \st |\one - \bsd'| \leq \beta \}$ having as a distribution the restriction $\bsq_{\bstheta}^{\beta}$ of $\bsq_{\bstheta}$ to $B_{\beta}$ and where $\varsigma$ is a random permutation drawn from the uniform law $u^{\alpha}_k$ with $k=\#\hat\bsY + |\bsd|$ on the set $A_k^{\alpha}$ defined by
\eqns{
A_k^{\alpha} \defeq \{ \sigma \in \Sym(k) \st d_H(\id,\sigma) \leq \alpha \}
}
with $\id$ denoting the identity function. Henceforth, the letter $\varsigma$ will be used for a random permutation and $\sigma$ for a realisation. The case $\alpha = 0$ is not considered to avoid redundancy: it holds that $A_k^0 = A_k^1 = \{\id\}$ for any $k \geq 1$ since permutations that are different from the identity move at least two points. The case of \cref{ex:simpleFisher} is recovered by considering $\alpha = 1$ and $\beta = 0$, i.e.\ $\varsigma = \id$ and $\bsD = \one$ almost surely, whereas the full data-association problem corresponds to the choice $\alpha = \infty$ and $\beta = \infty$. The cardinality of $A_k^{\alpha}$ is found to be
\eqns{
N_k^{\alpha} = \sum_{i=0}^{\alpha} \binom{k}{i} !i,
}
with $!i$ the subfactorial of $i$ which is equal to the number of \emph{derangements} of $i$ letters, where a derangement refers to a permutation that moves all the elements of its domain. The subfactorial $!i$ is defined via the same recurrence relation as the factorial $i!$, i.e.\ as $!i = (i-1)( !(i-1) + !(i-2) )$, but with the initialisation $!0 = 1$ and $!1 = 0$. The expression of $N_k^{\alpha}$ can be justified as follows: the number of permutations moving a number of points less or equal to $\alpha$ is also the number of permutations moving exactly $i$ points, i.e.\ the number of derangements of $i$ points, multiplied by the number of ways of picking $i$ points among $k$, for all $0 \leq i \leq \alpha$. It holds that $N^k_k = k!$ since
\eqns{
!n = n! - \sum_{i = 1}^n \binom{n}{i} !(n-i).
}
The alternative observation model \eqref{eq:perturbedModel} brings insight about the Fisher information matrix $\bsI^{\alpha,\beta}(\theta^*)$ corresponding to the observation model \eqref{eq:perturbedModel}, when compared to the unperturbed case. The corresponding information loss is defined as
\eqns{
\bsI^{\alpha,\beta}_{\loss}(\theta^*) \defeq K^* I(\theta^*) - \bsI^{\alpha,\beta}(\theta^*).
}
In some cases, the relative information loss $\bsI^{\alpha,\beta}_{\loss}(\theta^*) / (K^* I(\theta^*))$ will be used instead. The next theorem is the central result of this section, its proof can be found in \cref{proof:thm:informationLoss}.

\begin{theorem}
\label{thm:informationLoss}
Under Assumptions~\ref{it:boundTransition}, \ref{it:boundIntLikelihood} and \ref{it:asymptoticNormality1} to \ref{it:asymptoticNormality3}, the information loss $\bsI^{\alpha,\beta}_{\loss}(\theta^*)$ verifies $\bsI^{\alpha,\beta}_{\loss}(\theta^*) \geq 0$ for any $\alpha \geq 1$ and any $\beta \geq 0$, the inequality being strict if either $\alpha > 1$ or $\beta > 0$ and if $I(\theta^*) \neq 0$.
\end{theorem}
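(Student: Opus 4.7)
The plan is to deduce the theorem from a \emph{data processing inequality for Fisher information} applied to the randomisation that maps the clean multi-target observation $\bsY_{1:n}$ of \eqref{eq:unperturbedModel} into the perturbed one $\bsY^{\alpha,\beta}_{1:n}$ of \eqref{eq:perturbedModel}. The key observation is that at each time $t$ the triple $(\bsD_t,\hat\bsY_t,\varsigma_t)$ is independent across $t$ and its joint law depends only on $(p_{\D}^*, \lambda^*, \psi^*)$, not on $\theta^*$; since the Fisher information in \eqref{eq:FisherInfoTheta} differentiates only with respect to $\theta$, these parameters are held fixed and the conditional density $\bsp(\bsy^{\alpha,\beta}_{1:n} \mid \bsy_{1:n})$ is $\theta^*$-free.

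First I would write
\eqns{
\bar\bsp_{\bstheta^*}(\bsy^{\alpha,\beta}_{1:n}) = \int \bar\bsp_{\bstheta^*}(\bsy_{1:n})\, \bsp(\bsy^{\alpha,\beta}_{1:n}\mid \bsy_{1:n})\, \d\bsy_{1:n}
}
and differentiate under the integral (justified by Assumptions~\ref{it:asymptoticNormality1} to \ref{it:asymptoticNormality3}) to obtain the score identity
\eqns{
\nabla_{\theta} \log \bar\bsp_{\bstheta^*}(\bsY^{\alpha,\beta}_{1:n}) = \bar\bbE_{\bstheta^*}\bigl[ \nabla_{\theta} \log \bar\bsp_{\bstheta^*}(\bsY_{1:n}) \,\big|\, \bsY^{\alpha,\beta}_{1:n} \bigr].
}
The matrix conditional-variance decomposition (i.e.\ matrix Jensen's inequality) then yields, at every finite $n$, the positive semi-definite inequality
\eqns{
\bar\bbE_{\bstheta^*}\bigl[ \nabla_{\theta} \log \bar\bsp_{\bstheta^*}(\bsY^{\alpha,\beta}_{1:n}) \nabla_{\theta} \log \bar\bsp_{\bstheta^*}(\bsY^{\alpha,\beta}_{1:n})^{\tr} \bigr] \leq \bar\bbE_{\bstheta^*}\bigl[ \nabla_{\theta} \log \bar\bsp_{\bstheta^*}(\bsY_{1:n}) \nabla_{\theta} \log \bar\bsp_{\bstheta^*}(\bsY_{1:n})^{\tr} \bigr].
}
Dividing by $n$ and taking the limit, the right-hand side becomes the Fisher information of the unperturbed model, which equals $K^* I(\theta^*)$ thanks to the independence of the $K^*$ target channels in \eqref{eq:unperturbedModel}, so $\bsI^{\alpha,\beta}_{\loss}(\theta^*) \geq 0$.

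For strict positivity, the plan is to exhibit a positive-probability event at $n = 1$ on which conditioning on $\bsY^{\alpha,\beta}_1$ genuinely collapses part of the clean score. When $\beta \geq 1$, the set $B_{\beta}$ contains detection patterns $\bsd \neq \one$, so (since $p_{\D}^* \in (0,1)$) with strictly positive probability at least one clean target observation is deleted and, by independence of the $K^*$ target channels, the corresponding component of the score cannot be reconstructed from the surviving perturbed observation. When $\alpha \geq 2$, the support $A_k^{\alpha}$ contains transpositions, so with positive probability two detected-target observations are swapped and the observation-to-track labelling is genuinely lost. In either regime, provided $I(\theta^*) \neq 0$ so that the clean one-step score is not almost surely constant, the conditional variance of the clean score given $\bsY^{\alpha,\beta}_1$ is not almost surely zero and the inequality is strict at $n = 1$.

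The main obstacle will be to propagate the one-step strict gap to a strict inequality for the asymptotic average after the $n^{-1}$ normalisation. I expect this to follow from the uniform forgetting of the conditional chain at rate $\bsrho_{\bstheta}$ used before \eqref{eq:pointwiseConvergence}, which implies that the per-time contributions to the Fisher information are asymptotically stationary, so that a strict per-step loss survives time-averaging. Throughout, the regularity required for the interchange of differentiation and integration, and for the existence of the relevant expectations, is precisely what is supplied by Assumptions~\ref{it:asymptoticNormality1} to \ref{it:asymptoticNormality3}.
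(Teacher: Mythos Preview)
Your argument for $\bsI^{\alpha,\beta}_{\loss}(\theta^*) \geq 0$ via the score identity and the conditional-variance (Jensen) decomposition is correct and coincides with Part~2 of the paper's \cref{lem:informationLoss}. The genuine difficulties lie entirely in the strict inequality, and there your proposal has two concrete gaps.

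\textbf{First gap: the strictness mechanism for $\alpha>1$.} Your argument says that with positive probability ``two detected-target observations are swapped and the observation-to-track labelling is genuinely lost''. This is not the right mechanism and in fact fails when $K^*=1$, which is allowed. The paper's proof (\cref{lem:multpermute}, \cref{cor:multperm_clutter}) shows that what drives the strict loss under permutation is the mixing of target observations with \emph{clutter} points: given at least one clutter point (an event of positive probability since $\lambda^*>0$), the only function $f(\bsY)$ satisfying $f(\bsY)=\bbE[f(\bsY)\mid S_{\varsigma}(\bsY\oplus\hat\bsY)]$ is a constant. The remark immediately following \cref{thm:informationLoss} makes this explicit: $\alpha>1$ would \emph{not} yield strict loss if $\lambda^*=0$. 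Your plan does not invoke clutter at all in the permutation case. Even for $K^*\geq 2$, merely swapping two target observations at a single time does not force the score to be non-measurable with respect to $\sigma(\bsY^{\alpha,\beta})$; one needs the measure-theoretic argument of \cref{lem:multpermute}, which uses the extra (clutter) coordinate to force the collapse $f(Y_1,Y_2)=f(Y_3,Y_2)=f(Y_1,Y_3)$.

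\textbf{Second gap: passage from finite $n$ to the limit.} Even granting a strictly positive gap at $n=1$, dividing by $n$ and letting $n\to\infty$ may kill it unless you show the per-step gap is bounded below uniformly in $n$. Your appeal to uniform forgetting and ``asymptotic stationarity of per-time contributions'' is a hope, not an argument. The paper avoids this entirely by importing the representation of \cite[Lemma~3, Remark~9]{Dean2014}, which expresses the \emph{asymptotic} information loss directly as
\eqns{
\bsI^{\alpha,\beta}_{\loss}(\theta^*) = \bar\bbE_{\theta^*}\Big[ \bsI^{(m)}_{\bsY_{-\infty:-1},\bsY^{\alpha,\beta}_{m:\infty}}(\theta^*) \Big],
}
a non-negative conditional quantity for every $m$. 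One then argues contrapositively: if this vanishes for all $m$, \cref{lem:informationLoss} applied to the conditional laws forces the conditional score to be identically zero, and \cite[Lemma~4]{Dean2014} then yields $\nabla_{\theta}\log\bsp_{\theta^*}(\bsY_0\mid\bsY_{-\infty:-1})=0$ a.s., i.e.\ $I(\theta^*)=0$. No propagation of a finite-$n$ gap is needed.

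For the deletion case $\beta>0$ your heuristic is closer to the truth, but ``cannot be reconstructed'' still needs to be turned into the statement that $f(\bsY)=\bbE[f(\bsY)\mid\bsY_D]$ implies $f$ constant; the paper does this via \cref{lem:multidel} and its supporting \cref{lem:multidel_sublem1,lem:multidel_sublem2}, which are not one-line facts.
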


Notice that the condition $\alpha > 1$ would not be sufficient to make the inequality in \cref{thm:informationLoss} strict if $\lambda^*$ were equal to $0$ since data association might have no influence in some specific configurations, e.g.\ when the individual likelihood does not depend on the objects' state. \Cref{thm:informationLoss} does not provide a quantitative characterisation of the information loss. Doing so is challenging in the general case, yet, the behaviour of the information loss can be analysed for special cases, and such will be the objective in the remainder of this section.

One of the advantages with the modified observation model \eqref{eq:perturbedModel} is that the Fisher identity can be utilised as an alternative way of computing the score function based on the unobserved random variables in this model:
\eqnl{eq:FisherIdentity}{
\nabla_{\theta} \log \bar\bsp_{\bstheta}(\bsy_{1:n}) = \bar\bbE_{\bstheta} \big[ \nabla_{\theta} \log \bar\bsp_{\bstheta}(\bsY^{\alpha,\beta}_{1:n},\varsigma_{1:n},\bsD_{1:n},\bsX_{0:n}) \given \bsY^{\alpha,\beta}_{1:n} = \bsy_{1:n} \big],
}
where
\eqnsml{
\bar\bsp_{\bstheta}(\bsy_{1:n},\sigma_{1:n},\bsd_{1:n},\bsx_{0:n}) = \pi^{\times K}_{\theta}(\bsx_0) \prod_{t = 1}^n \bigg[ \Po_{\lambda}(M_t-|\bsd_t|) \\
\times \prod_{i = |\bsd_t|+1}^{M_t} p_{\psi}(\bsy_{t,\sigma_t(i)}) \prod_{i=1}^{|\bsd|} g_{\theta}(\bsy_{t,\sigma_t(i)} \given \bsx_{t,r(i)}) \prod_{i = 1}^K f_{\theta}(\bsx_{t,i} \given \bsx_{t-1,i}) u_{M_t}^{\alpha}(\sigma_t) \bsq_{\bstheta}^{\beta}(\bsd_t) \bigg].
}
The simplification of the expression of $\nabla_{\theta} \log \bar\bsp_{\bstheta}(\bsy_{1:n})$ is only notational. The random variables $\varsigma_{1:n}$, $\bsD_{1:n}$ and $\bsX_{0:n}$ are conditioned on the event $\bsY^{\alpha,\beta}_{1:n} = \bsy_{1:n}$ in \eqref{eq:FisherIdentity}, so that their respective distributions are now the conditional distributions given the observations, which are more complex than their priors. Yet, the Fisher identity enabled to move the sums and integrals outside of the logarithm, hence making easier the analysis of the Fisher information matrix.

\subsection{Single static target with false alarm}
\label{ssec:falseAlarm}

Consider the case of one almost-surely detected static target with state $x \in \bbX$ which observation is corrupted by false alarms and unknown data association. The corresponding $\bstheta^*$ is in the hyperplane $\bsTheta_{p_{\D}=1}|_{K=1}$ of the special parameter set $\bsTheta_{p_{\D} = 1}$ composed of parameters for which $K=1$. It is sufficient to study one time step since the observations at different times become independent in this case and it holds that
\eqnsa{
\bsI(\theta^*) & = \lim_{n \to \infty} \dfrac{1}{n} \bar\bbE_{\bstheta^*} \Big[ \nabla_{\theta} \log \bar\bsp_{\bstheta^*}(\bsY_{1:n}) \cdot \nabla_{\theta} \log \bar\bsp_{\bstheta^*}(\bsY_{1:n})^{\tr} \Big] \\
& = \bar\bbE_{\bstheta^*} \Big[ \nabla_{\theta} \log \bar\bsp_{\bstheta^*}(\bsY_1) \cdot \nabla_{\theta} \log \bar\bsp_{\bstheta^*}(\bsY_1)^{\tr} \Big].
}
Making use of the Fisher identity \eqref{eq:FisherIdentity}, the Fisher information matrix $\bsI^{\infty,0}(\theta^*)$ can be expressed as
\eqns{
\bsI^{\infty,0}(\theta^*) = \bbE_{\bstheta^*}\bigg[ \sum_{i,j = 1}^M c_i(\bsY)c_j(\bsY) \nabla_{\theta} \log g_{\theta^*}(\bsY_i \given x) \cdot \nabla_{\theta} \log g_{\theta^*}(\bsY_j \given x)^{\tr} \bigg],
}
where $M = \#\bsY$ and where
\eqns{
c_i(\bsy) = \sum_{\substack{\sigma\in \Sym(M) \\ \sigma(1) = i}} u_M(\sigma \given \bsy) = \dfrac{g_{\theta^*}(\bsy_i \given x) / p_{\psi^*}(\bsy_i)}{\sum_{j = 1}^M g_{\theta^*}(\bsy_j \given x) / p_{\psi^*}(\bsy_j)}.
}
Identifying the parameter $\theta^*$ is most challenging when the distribution of the false alarm is equal to the one of the target-originated observation at $\theta^*$, i.e.\ $p_{\psi^*} = g_{\theta^*}( \cdot \given x)$, since all the observations will look alike for $\theta$ close to $\theta^*$.

In this case it holds that $c_i(\bsY) = 1/M$ for any $1 \leq i \leq M$ so that
\eqnsa{
\bsI^{\infty,0}(\theta^*) & = \sum_{m \geq 1} \dfrac{\Po_{\lambda^*}(m-1)}{m^2} \sum_{i = 1}^m \bbE_{\bstheta^*}\big[ \nabla_{\theta} \log g_{\theta^*}(\bsY_i \given x) \cdot \nabla_{\theta} \log g_{\theta^*}(\bsY_i \given x)^{\tr} \given m \big] \\
& = \bbE\Big[\dfrac{1}{N+1}\Big] I(\theta^*)
}
where the expectation is taken w.r.t.\ the random variable $N \sim \Po_{\lambda^*}$. It follows that the relative information loss is equal to $\bbE[N/(N+1)]$ so that it is strictly increasing with $\lambda^*$ and tends to $1$ when $\lambda^*$ tends to infinity. This result is supported by the experiments displayed in \cref{fig:lossFalseAlarm} where the observation of one static target in $\bbX = \bbR$ at $x = 0$ is corrupted by false alarms. The observation model is assumed to be linear and Gaussian with variance $\theta$ such that $\theta^* = 1$. Cases where the false alarm is uniform over the subset $[-a,a]$ with $a \in \{5,10,25,50,100\}$ are also considered. The scenario where the false alarm is distributed in the same way as the target-originated observation at $\theta^*$, i.e.\ $p_{\psi^*} = g_{\theta^*}(\cdot \given x)$, is also confirmed to be the worst-case scenario.

\begin{figure}
\centering
\includegraphics[width=.5\textwidth,trim = 110pt 265pt 120pt 285pt, clip]{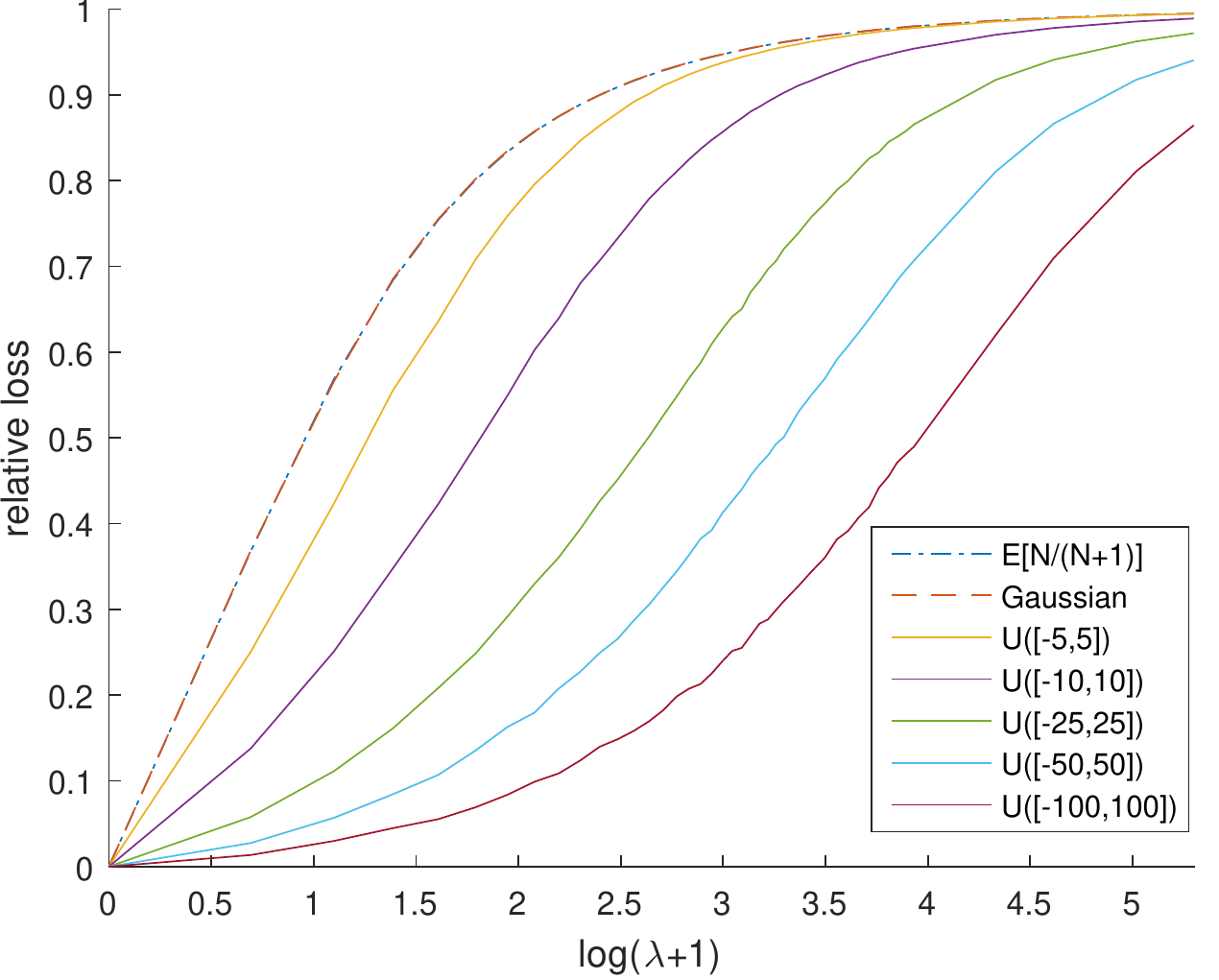}
\caption{Information loss as a function of the Poisson parameter $\lambda$ in log-scale, calculated with $5\times10^5$ samples (Gaussian: worst-case scenario; $U([-a,a])$: uniform distribution over $[-a,a]$).}
\label{fig:lossFalseAlarm}
\end{figure}

In the next two sections, the focus will be on understanding the role played specifically by unknown data association and detection failures.

\subsection{Unknown data association}
\label{ssec:informationDataAssociation}

In order to set the focus on data association, it is assumed that $\bstheta^*$ belongs to the special parameter set $\bsTheta_{\lambda=0,p_{\D}=1}$. In these conditions, the joint probability of the observations and states becomes
\eqnlml{eq:jointDistributionSumAssociations}{
\bar\bsp_{\bstheta^*}(\bsy_{1:n}, \bsx_{0:n}) = \pi^{\times K^*}_{\theta^*}(\bsx_0) \\
\times \prod_{t = 1}^n \sum_{\sigma \in \Sym(K^*)} \bigg[ \prod_{i = 1}^{K^*} \big[ g_{\theta^*}(\bsy_{t,\sigma(i)} \given \bsx_{t,i}) f_{\theta^*}(\bsx_{t,i} \given \bsx_{t-1,i}) \big] u_{K^*}(\sigma) \bigg] .
}
The sum in the previous expression makes it difficult to directly compute the Fisher information matrix. Some insight about it can however be obtained by considering static objects as in the following example.

\begin{example}
\label{ex:staticTargets}
Let $x_1,\dots,x_{K^*}$ be the known position of $K^*$ static objects. The joint distribution of the observations is then found to be
\eqns{
\bsp_{\bstheta^*}(\bsy_{1:n}) = \prod_{t = 1}^n \sum_{\sigma \in \Sym(K^*)} \bigg[ \prod_{i = 1}^{K^*} g_{\theta^*}(\bsy_{t,\sigma(i)} \given x_i) u_{K^*}(\sigma) \bigg].
}
In this simplified setting, we can assume that $g_{\theta^*}$ has finite support so that the objects' state can be chosen far enough from each other for $\prod_{i = 1}^{K^*} g_{\theta^*}(\bsY_{t,\sigma(i)} \given x_i) = 0$ to hold $\bbP$-a.s.\ whenever $\sigma \neq \id$. In this case, and as expected, there is no loss of information when compared to the case with known data association. A less intuitive result can be found when all the objects' state are equal to a given $x \in \bbX$. In this situation, it holds that all permutations are equally probable so that
\eqns{
\bsp_{\bstheta^*}(\bsy_{1:n}) = \prod_{t = 1}^n \prod_{i = 1}^{K^*} g_{\theta^*}(\bsy_{t,i} \given x),
}
and once again, there is no loss of information. These two cases correspond to extreme configurations where the uncertainty on the data association is either resolvable or irrelevant.
\end{example}

The Fisher identity can be used to provide an expression of the Fisher information for static objects as follows. For any fixed $x_1,\dots,x_{K^*}$, the Fisher information for $\alpha = \infty$ (fully unknown association) and $\beta = 0$ can be deduced from
\eqnsa{
\nabla_{\theta} \log \bsp_{\bstheta}(\bsy) & = \bbE_{\bstheta}\big[ \nabla_{\theta} \log \bsp_{\bstheta}(\bsY,\varsigma) \given \bsY = \bsy \big] \\
& = \sum_{\sigma \in \Sym(K)} \sum_{i=1}^K \nabla_{\theta} \log g_{\theta}(\bsy_{\sigma(i)} \given x_i) u_{K}(\sigma | \bsy).
}
The Fisher information matrix $\bsI^{\alpha,\beta}(\theta^*)$ with $\alpha = \infty$, $\beta = 0$ and without false alarm is found to be
\eqnsa{
\bsI^{\infty,0}(\theta^*) & = \bbE_{\bstheta^*}\big[ \nabla_{\theta} \log \bsp_{\bstheta^*}(\bsY^{\infty,0}) \cdot \nabla_{\theta} \log \bsp_{\bstheta^*}(\bsY^{\infty,0})^{\tr} \big] \\
& = \sum_{\sigma,\sigma' \in \Sym(K^*)} \sum_{i,j = 1}^{K^*} \bbE_{\theta^*}\big[ u_{K^*}(\sigma | \bsY) u_{K^*}(\sigma' | \bsY) \Sco_i(\bsY_{\sigma(i)}) \cdot \Sco_j(\bsY_{\sigma'(j)})^{\tr} \big] \\
& = \sum_{i,j,k,l = 1}^{K^*} \bbE_{\theta^*}\big[ c_{i,k}(\bsY)c_{j,l}(\bsY) \Sco_i(\bsY_k) \cdot \Sco_j(\bsY_l)^{\tr} \big]
}
with $\Sco_i(y) = \nabla_{\theta} \log g_{\theta^*}(y \given x_i)$ for any $y \in \bbY$ and with
\eqnsa{
c_{i,k}(\bsy) & = \sum_{\substack{\sigma \in \Sym(K^*) \\ \sigma(i) = k}} u_{K^*}(\sigma | \bsy) \\
& = g_{\theta^*}(\bsy_k \given x_i) \sum_{\substack{\sigma \in \Sym(K^*) \\ \sigma(i) = k}} \prod_{j\neq i} g_{\theta^*}(\bsy_{\sigma(j)} \given x_j) \bigg( \sum_{\sigma \in \Sym(K^*)} \prod_{j = 1}^{K^*} g_{\theta^*}(\bsy_{\sigma(j)} \given x_j) \bigg)^{-1}
}
for any $\bsy \in \bbY^{K^*}$ and any $i,k \in \{1,\dots,K^*\}$. The term $c_{i,k}(\bsy)$ is the conditional probability for the object with state $x_i$ to have generated observation $k$ given all observations $\bsy$. 

In order to obtain a quantitative characterisation of the information loss, a special likelihood has to be introduced. We consider an observation model of the same form as the one displayed in \cref{fig:normalUniform}, i.e.\ such that $\bbY$ is compact and there exists a collection of disjoint subsets $\{B_i\}_{i=1}^{K^*}$ of $\bbY$ such that $g_{\theta}( \cdot \given x_i)$ uniformly distributes a probability mass $\epsilon > 0$ outside of $B_i$. An example of such a distribution is given in \cref{fig:normalUniform} for two objects. Then, for $K$ objects,
\eqnl{eq:infSimpleLikelihood}{
\bsI^{\infty,0}(\theta^*) = \sum_{i,j,k,l=1}^K E_{i,j}^{k,l}(\theta^*)
}
with $E_{i,j}^{k,l}(\theta^*) \defeq \bbE_{\theta^*}\big[ c_{i,k}(\bsY)c_{j,l}(\bsY) \Sco_i(\bsY_k) \cdot \Sco_j(\bsY_l)^{\tr} \big]$ for any $i,j,k,l \in \{1,\dots,K\}$. The objective is now to understand the behaviour of $\bsI^{\infty,0}(\theta^*)$ when $K$ is large. The order of the term $c_{i,k}(\bsy)$ is in $O(1)$ when $i = k$ and in $O(K^{-1})$ when $i \neq k$. The order of the summand in \eqref{eq:infSimpleLikelihood} can then be determined for the different values of $i,j,k,l$. For instance:
\begin{itemize}[wide]
\item If $i \neq k \neq l \neq j$ then
\eqnl{eq:exOrderTerm}{
E_{i,j}^{k,l}(\theta^*) = 
\dfrac{\epsilon^2}{|\bbY\setminus B_k|^2} \int_{C_{i,j}^{k,l}} c_{i,k}(\bsy)c_{j,l}(\bsy) \dfrac{\nabla_{\theta}\, g_{\theta^*}(\bsy_k \given x_i) \cdot \nabla_{\theta}\, g_{\theta^*}(\bsy_l \given x_j)^{\tr}}{g_{\theta^*}(\bsy_k \given x_i)g_{\theta^*}(\bsy_l \given x_j)} \d \bsy,
}
where $C^{k,l}_{i,j} \defeq \{\bsy \in \bbY^K \st \bsy_k \in B_i \And \bsy_l \in B_j\}$, because $g_{\theta^*}(y \given x_k) = \epsilon/|\bbY\setminus B_k|$ for all $y \notin B_k$ and because $B_i \cap B_k = \emptyset$ since $i \neq k$. When $K$ increases, $\bbY$ needs to be augmented at least linearly to ensure that the family $\{B_i\}_{i=1}^K$ is disjoint and \eqref{eq:exOrderTerm} shows inverse proportionality with $|\bbY|^2$, so that it is of order $O(K^{-4})$ at most. There are $O(K^4)$ terms of this form in the sum in the r.h.s.\ of \eqref{eq:infSimpleLikelihood} so that the sum of these terms is of order $O(1)$ at most.
\item If $k=l$ and $i \neq j$ then $E_{i,j}^{k,l}(\theta^*) = 0$ since in this case it holds that $\Sco_i(\bsy_k) \cdot \Sco_j(\bsy_k)^{\tr} = 0$ for any $\bsy \in \bbY^K$ which follows from the facts that $\Sco_i(y) \neq 0$ when $y \in B_i$ only and that $B_i \cap B_j = \emptyset$.
\item If $i=j=k=l$ then
\eqns{
E_{i,j}^{k,l}(\theta^*) = \int \ind{B_i}(\bsy_i) c_{i,i}(\bsy)^2 \dfrac{\nabla_{\theta}\, g_{\theta^*}(\bsy_i \given x_i) \cdot \nabla_{\theta}\, g_{\theta^*}(\bsy_i \given x_i)^{\tr}}{g_{\theta^*}(\bsy_i \given x_i)} \d \bsy,
}
which does not depend on $K$ or $|\bbY|$ and is therefore of order $O(1)$.
\end{itemize}

\begin{figure}
\centering
\includegraphics[width=.5\textwidth,trim = 110pt 335pt 120pt 355pt, clip]{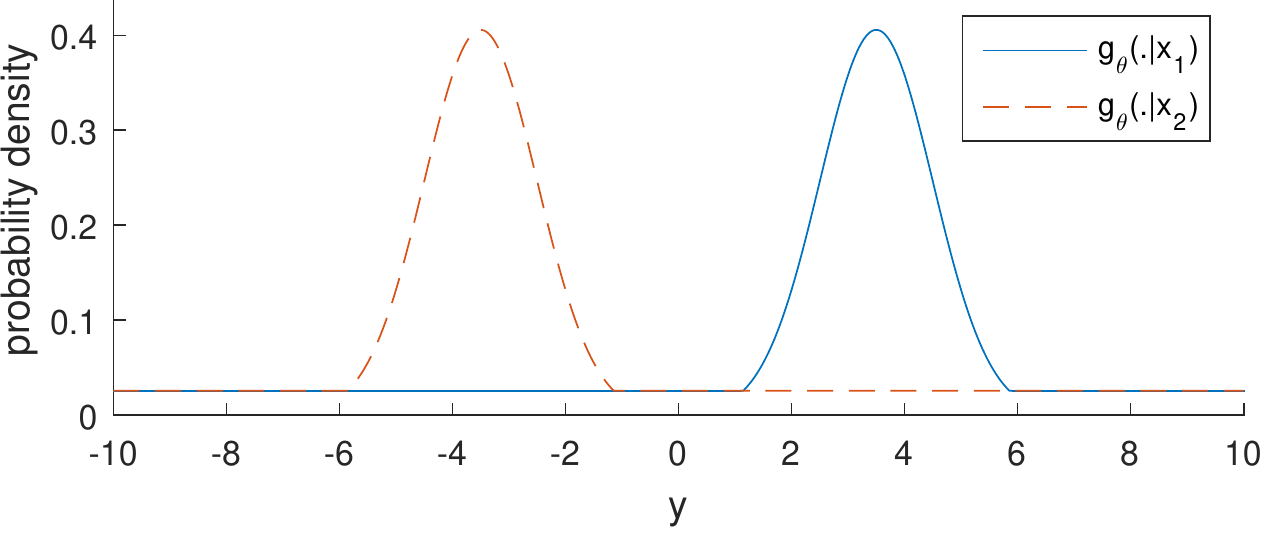}
\caption{Example of likelihood with two objects at states $x_1$ and $x_2$.}
\label{fig:normalUniform}
\end{figure}

Following the same principles for the other values of $i,j,k,l$, we find that $\bsI^{\infty,0}(\theta^*)$ is of order $O(K)$. Since the information in the idealised observation model, i.e.\ when data association is known, is equal to $K I(\theta^*)$, it follows that the relative loss is constant. In other words, for a large number of targets, adding more targets increases the information at the same rate as in the idealised model.

\paragraph{Validation via simulations} The special likelihood is taken of the form
\eqns{
g_{\theta}(y \given x_k) = 
\begin{cases*}
\calN(y; x_k+m,1) & if $y \in B_k \defeq (x_k+m-r, x_k+m+r)$ \\
\epsilon/|\bbY\setminus B_k| & otherwise,
\end{cases*}
}
with $\epsilon = 0.1$ and with $r$ characterised by $\int_{B_k} \calN(y; x_k,1) \d y = 1 - \epsilon$ via $B_k$. In this case, the displacement $m$ is considered as the parameter $\theta$ and the true value is $\theta^* = 0$. The relative information loss associated with this likelihood is displayed in \cref{fig:lossDataAssociation2} under two different configurations. The first one (\emph{Constant observation space} in the figure) corresponds to the case where the observation space is large enough to meet the requirements associated with \eqref{eq:infSimpleLikelihood}; the relative loss can be seen to increase linearly with the number of targets. The second case (\emph{Adaptive observation space} in the figure) corresponds to the case where the observation space has to be augmented to fit new targets and shows a constant relative information loss. This last result is consistent with the conclusion above that the information loss is of the same order as the number of targets when the observation space has to be augmented.

\begin{figure}
    \centering
    \begin{subfigure}[b]{0.49\textwidth}
      \includegraphics[width=\textwidth,trim = 105pt 265pt 120pt 285pt, clip]{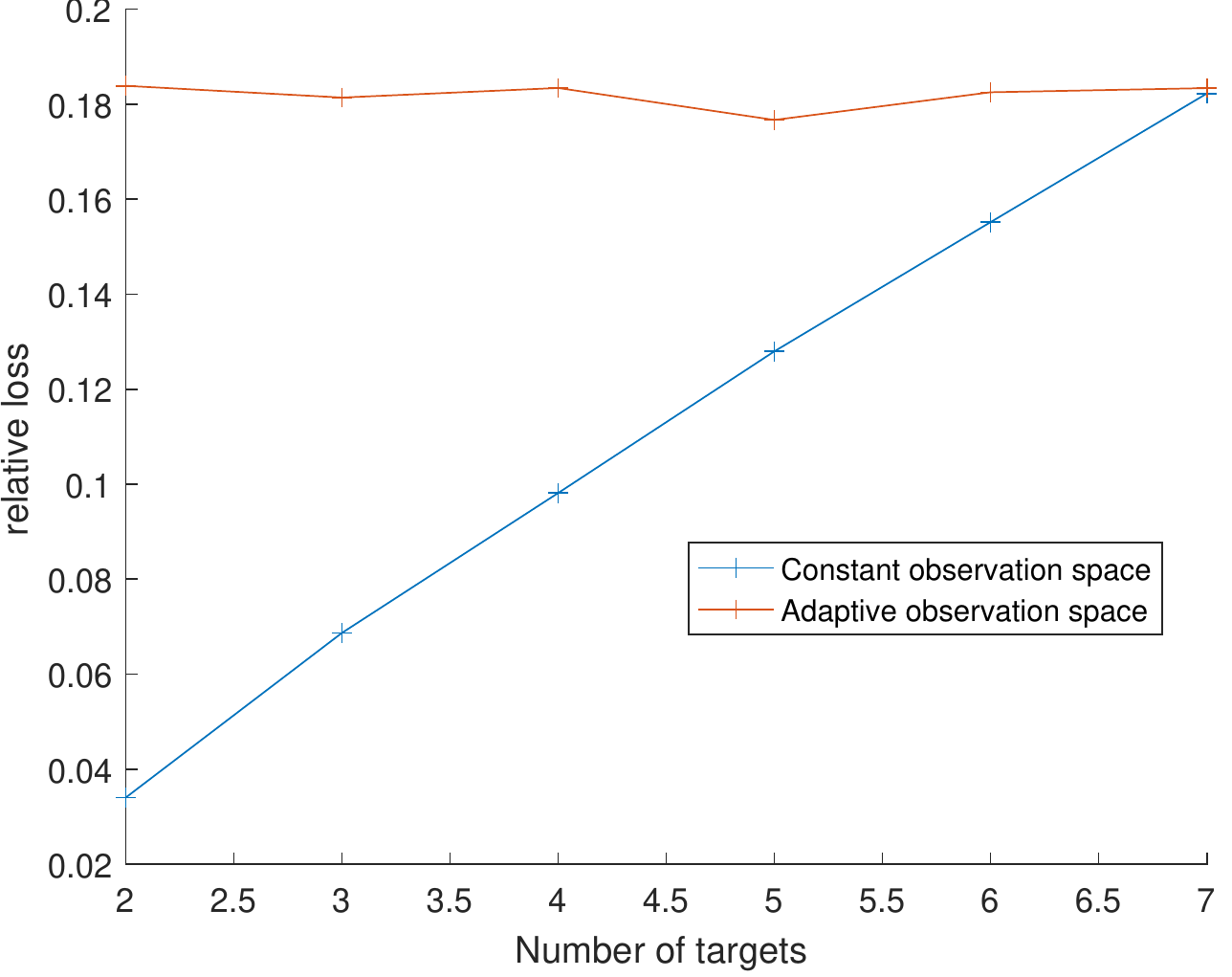}
      \caption{For a varying number of objects for the special likelihood ($10^5$ MC runs).}
      \label{fig:lossDataAssociation2}
    \end{subfigure} 
    \begin{subfigure}[b]{0.49\textwidth}
      \includegraphics[width=\textwidth,trim = 110pt 265pt 120pt 285pt, clip]{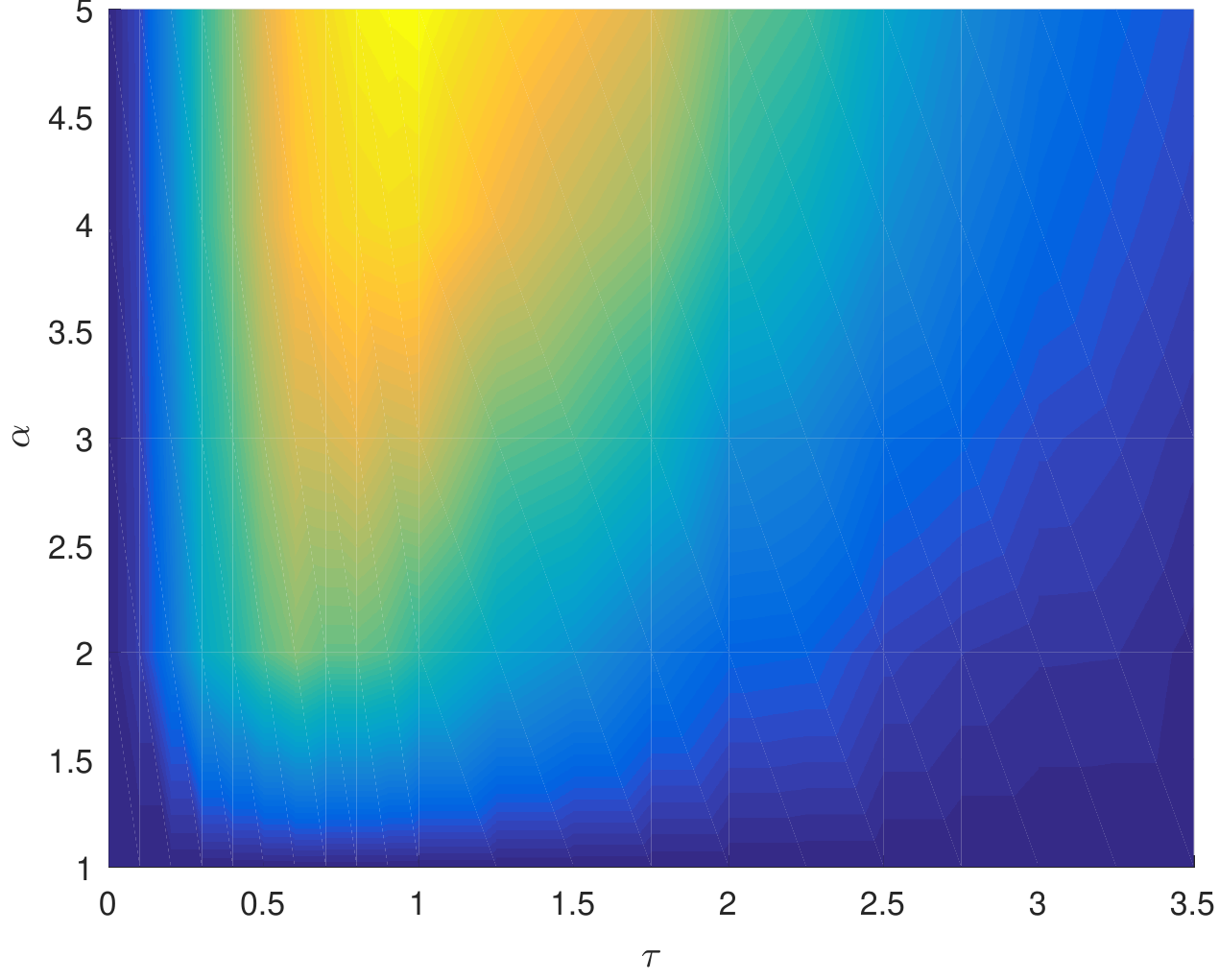}
      \caption{For varying association uncertainty $\alpha$ and spatial separation $\tau$ ($10^4$ MC runs).}
      \label{fig:lossDataAssociation}
    \end{subfigure}
    \begin{subfigure}[b]{0.49\textwidth}
      \includegraphics[width=\textwidth,trim = 105pt 265pt 120pt 285pt, clip]{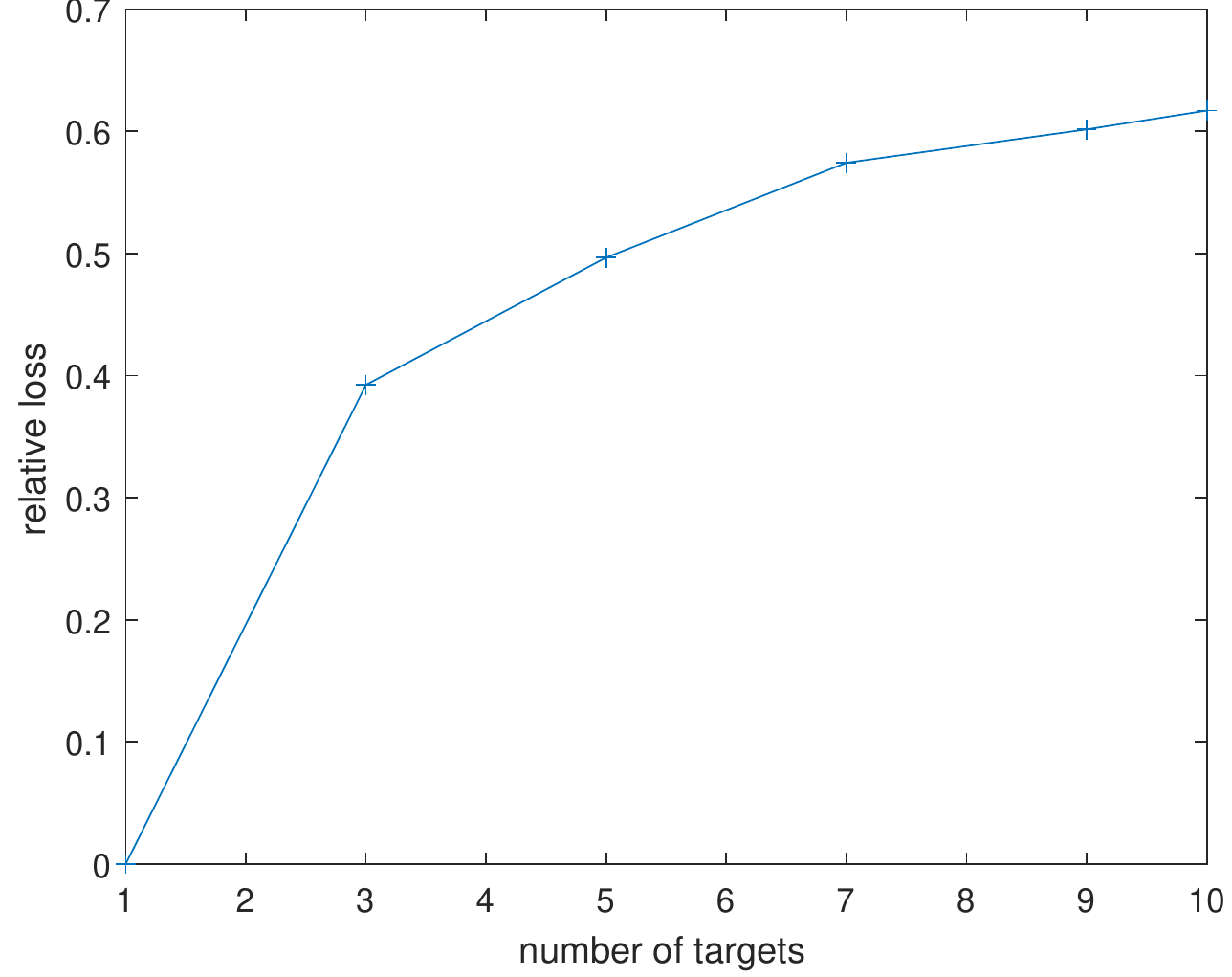}
      \caption{For a varying number of objects with separation $\tau = 1$ and $\alpha=\infty$ ($10^4$ MC runs).}
      \label{fig:lossDataAssociation3}
    \end{subfigure}
    \begin{subfigure}[b]{0.49\textwidth}
      \includegraphics[width=\textwidth,trim = 110pt 265pt 120pt 285pt, clip]{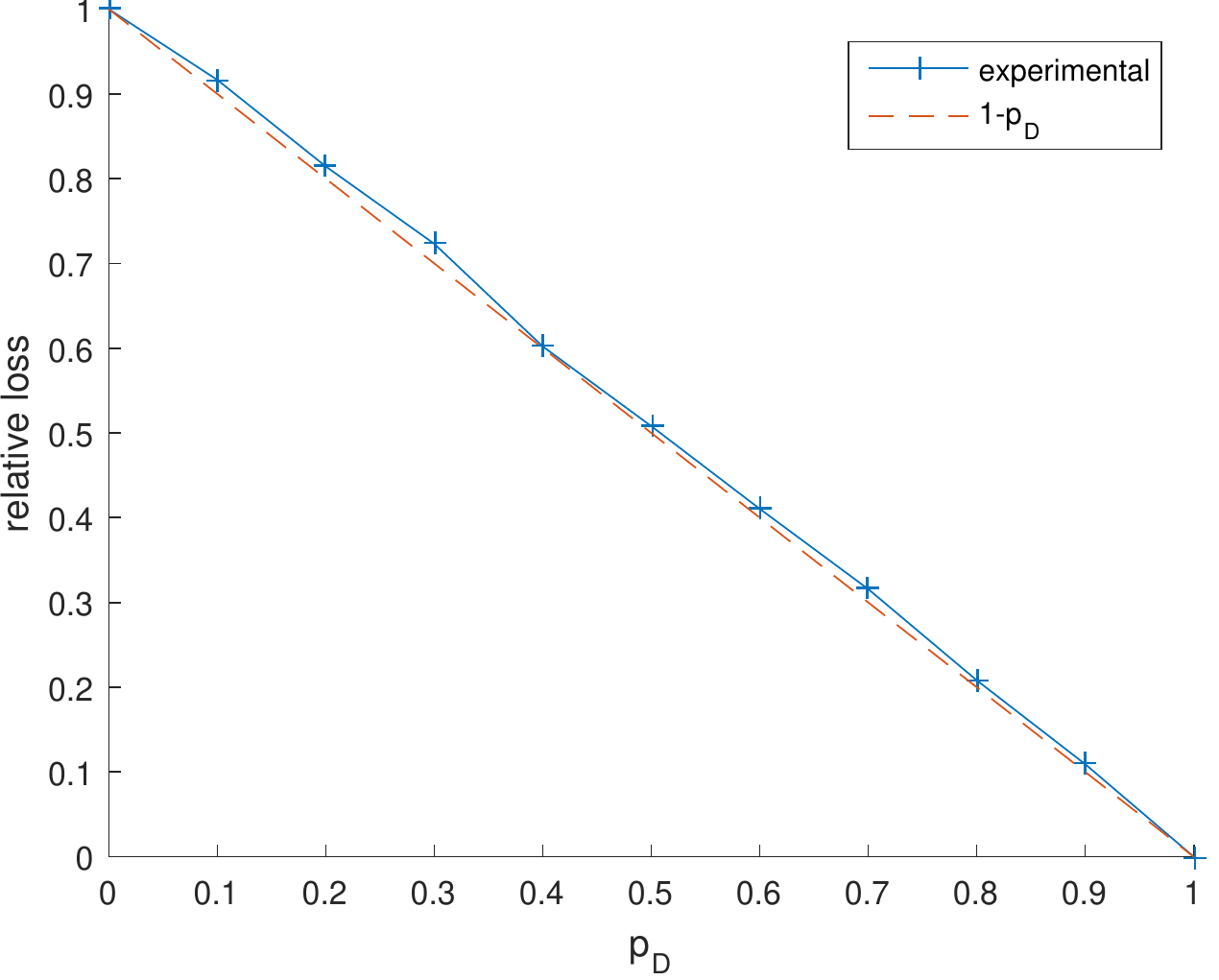}
      \caption{For a varying probability of detection $p_{\D}$, compared to $1-p_{\D}$.}
      \label{fig:lossDetectionFailure}
    \end{subfigure}
    \caption{Information loss with association uncertainty (\ref{fig:lossDataAssociation2}-\ref{fig:lossDataAssociation3}) or detection failures (\ref{fig:lossDetectionFailure}).}
\end{figure}

\paragraph{Further simulations} Five static objects on $\bbX = \bbR$ at positions $x_i = \tau (i-3)$ with $i \in \{1,\dots,5\}$ are observed via a linear Gaussian model with variance equal to $1$. The objective is to understand how the Fisher information matrix $\bsI^{\alpha,0}(\theta^*)$ evolves with $\alpha$ and with the position of the objects. It is assumed that $\theta$ parametrises the variance of the Gaussian observation model only, so that $\bsI^{\alpha,0}(\theta^*)$ is a scalar. The relative information loss is displayed in \cref{fig:lossDataAssociation} and confirms the intuition that the information loss increases with $\alpha$, except in the case $\alpha = 1$ where there is no loss by definition since $A_k^1 = \{\id\}$ for any $k \geq 1$ so that the data association is known in this case. Also, the loss is increased when the individual likelihoods overlap while being increasingly different and then decreases when the overlap becomes negligible. The maximum is reached when $\tau = 1$, that is when the distance $|x_i - x_{i-1}|$ between two consecutive objects is $1$ for any $i \in \{2,\dots,5\}$. The fact that there is no loss when $\tau = 0$ follows from the irrelevance of data association uncertainty when all objects are at the same position, as explained in \cref{ex:staticTargets}. To better understand the behaviour w.r.t.\ the number of targets, \cref{fig:lossDataAssociation3} displays the relative information loss for $1$ to $10$ targets in the case of full data association uncertainty  with $\tau = 1$.

The results for the two sets of simulations are consistent and show the same trend: the relative information loss increases with the number of targets but tend to stabilise. To sum up, there is no loss for $1$ target by construction, the loss is linear in the number of targets when there are sufficiently many, and it increases the fastest during the transition between these two modes.

\subsection{Detection failures}
\label{ssec:detectionFailure}

In this section, the case of detection failures is analysed when assuming that there are no false alarms, that is when $\bstheta^*$ is in the special-parameter set $\bsTheta_{\lambda=0}$.
To establish our main result in this section (\cref{thm:informationLossPd}), we will use the concept of missing information (see for instance \cite{Dean2014} in the context of Approximate Bayesian Computation).

\begin{theorem}
\label{thm:informationLossPd}
Assuming $\bstheta^* \in \bsTheta_{\lambda=0}$, the information loss $\bsI^{\alpha,\beta}_{\loss}(\theta^*)$ for known data association with unconstrained detection failures, i.e\ for $\alpha = 1$, $\beta = \infty$, is found to be
\eqns{
\bsI^{1,\infty}_{\loss}(\theta^*) \defeq (1-p^*_{\D}) K^* I(\theta^*).
}
\end{theorem}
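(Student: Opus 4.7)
The plan is to invoke the missing-information principle (Louis's identity), following the approach of \cite{Dean2014}. Take as \emph{complete} data the pair $(\bsY_{1:n}, \bsD_{1:n})$, where $\bsY_{1:n}$ is the unthinned $K^*$-target observation sequence (all targets observed at every time) and $\bsD_{1:n}$ is the vector of detection indicators, and take as \emph{observed} data the thinned sequence $\bsY^{1,\infty}_{1:n}$. Since the distribution $\bsq_{\bstheta}$ of $\bsD$ does not depend on $\theta$, adding $\bsD_{1:n}$ to the observed side leaves the $\theta$-Fisher information unchanged, so we may equivalently regard $(\bsY^{1,\infty}_{1:n}, \bsD_{1:n})$ as observed. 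Because $\bsq_{\bstheta}$ is $\theta$-ancillary, the complete-data score in $\theta$ equals $\sum_{i=1}^{K^*} \nabla_\theta \log \bar p_{\theta^*}(\bsY_{1:n,i})$, i.e.\ the score of the unperturbed, fully-detected multi-target system whose targets are independent. Louis's identity then reads
\[
\bsI^{1,\infty}(\theta^*) \;=\; \bsI_{\mathrm{comp}}(\theta^*) - \bsI_{\mathrm{miss}}(\theta^*),
\]
with $\bsI_{\mathrm{miss}}(\theta^*)$ the asymptotic $n^{-1}$-rate of the expected conditional variance of the complete score given the observed data.

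First I would argue that $\bsI_{\mathrm{comp}}(\theta^*) = K^* I(\theta^*)$. The complete data has the same law as the fully observed, independent $K^*$-target system of \cref{ex:simpleFisher}; target independence together with the ergodicity result \eqref{eq:pointwiseConvergence} and the forgetting bound $\bsrho_{\bstheta}$ from Assumption~\ref{it:boundTransition} justify passing to the per-time-step rate, which is $K^* I(\theta^*)$.

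Next I would reduce the missing information to a single-target problem. Conditioning on $(\bsY^{1,\infty}_{1:n}, \bsD_{1:n})$ fixes, for each target $i$, the random index set $T_i = \{t : \bsD_{t,i} = 1\}$ of visible times and the values of $\bsY_{t,i}$ for $t \in T_i$; the missing variates are $\{\bsY_{t,i} : t \notin T_i\}$. Because (i) $\bsD$ is independent of $\bsY$ and (ii) targets are independent, the missing-conditional distribution factors as $\prod_{i=1}^{K^*} \bar p_{\theta^*}(\bsY_{\bar T_i, i}|\bsY_{T_i, i})$, and the conditional variance splits into $K^*$ exchangeable single-target terms, reducing the claim to the single-target identity
\[
\lim_{n\to\infty}\frac{1}{n}\bar\bbE_{\theta^*}\bigl[\nabla_\theta \log \bar p_{\theta^*}(Y_{\bar T}|Y_T)^{\otimes 2}\bigr] \;=\; (1-p^*_{\D})\, I(\theta^*),
\]
where $T \subset \{1,\dots,n\}$ is an independent Bernoulli-$p^*_{\D}$ thinning. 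Writing $\log \bar p_{\theta^*}(Y_{1:n}) = \log \bar p_{\theta^*}(Y_T) + \log \bar p_{\theta^*}(Y_{\bar T}|Y_T)$, the cross term in the Fisher identity vanishes by the tower property (the conditional score has zero conditional mean), which yields the Pythagorean decomposition
\[
I(\theta^*) \;=\; \lim_{n\to\infty}\frac{1}{n}\bar\bbE_{\theta^*}\bigl[\nabla_\theta \log \bar p_{\theta^*}(Y_T)^{\otimes 2}\bigr] + \lim_{n\to\infty}\frac{1}{n}\bar\bbE_{\theta^*}\bigl[\nabla_\theta \log \bar p_{\theta^*}(Y_{\bar T}|Y_T)^{\otimes 2}\bigr].
\]

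The hardest step is identifying the first limit on the right above as $p^*_{\D} I(\theta^*)$, i.e.\ showing that Bernoulli-thinning a stationary HMM's observations reduces the Fisher-information rate by exactly the thinning factor. I would establish this via a martingale-increment decomposition of $\nabla_\theta \log \bar p_{\theta^*}(Y_T)$ indexed by the elements of $T$, combined with the uniform forgetting of the conditional Markov chain guaranteed by Assumption~\ref{it:boundTransition} (with rate $\bsrho_{\bstheta}$): forgetting makes the one-step predictive Fisher information at each visible time step equal $I(\theta^*) + o(1)$ regardless of the thinning pattern of the past, and an ergodic averaging with density $p^*_{\D}$ produces the claimed rate. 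Once this is in hand, the second term above equals $(1-p^*_{\D}) I(\theta^*)$, so $\bsI_{\mathrm{miss}}(\theta^*) = K^*(1-p^*_{\D}) I(\theta^*)$, and substitution into Louis's identity gives $\bsI^{1,\infty}(\theta^*) = p^*_{\D} K^* I(\theta^*)$ and hence the stated information loss $\bsI^{1,\infty}_{\loss}(\theta^*) = (1-p^*_{\D}) K^* I(\theta^*)$.
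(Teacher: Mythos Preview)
Your overall plan via the missing-information principle matches the starting point the paper announces, and the reduction from $K^*$ targets to a single target by independence is correct. The divergence is in how the single-target step is actually carried out, and the step you flag as ``hardest'' contains a genuine gap.

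You assert that uniform forgetting makes the one-step predictive Fisher information at each visible time equal $I(\theta^*)+o(1)$ \emph{regardless of the thinning pattern of the past}. Forgetting does not give this. Forgetting guarantees that the filter becomes insensitive to its \emph{initial distribution}; it does not say that the filter conditioned on a thinned past coincides with the filter conditioned on the full past. If the most recent visible time before $0$ is $-k$, then $\bar p_{\theta^*}(x_0\given Y_{T\cap(-\infty,0)})$ is a $k$-step prediction from the time-$(-k)$ filter, which differs from $\bar p_{\theta^*}(x_0\given Y_{-\infty:-1})$, and the variance of the resulting predictive score for $Y_0$ genuinely depends on the gap pattern. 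Moreover, since the identity $\lim_n n^{-1}\bar\bbE_{\theta^*}\big[\nabla_\theta\log\bar p_{\theta^*}(Y_T)\cdot\nabla_\theta\log\bar p_{\theta^*}(Y_T)^{\tr}\big]=p_{\D}^* I(\theta^*)$ is equivalent to the single-target version of the theorem itself, your Louis/Pythagorean framing does not actually reduce the difficulty: the entire content of the result sits in that one limit, for which your forgetting argument is not a proof.

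The paper takes a different route. Rather than delete the undetected observations, it replaces each undetected $Y_t$ by an $\epsilon$-inflated surrogate $Y_t^\epsilon=Y_t+\epsilon Z_t$ (with $Z_t$ uniform on the unit ball), checks that the score of the original thinned model coincides with that of the surrogate as $\epsilon\to\infty$, and then applies \cite[Theorem~5]{Dean2014} to the surrogate. Because every time step now carries an observation, the information-loss formula from \cite{Dean2014} lands on an expression in which the conditioning is on the \emph{full} unperturbed past $Y_{-\infty:-1}$; sending $\epsilon\to\infty$ kills the $Y^\epsilon$-contributions and leaves
\[
(1-p_{\D}^*)\,\bar\bbE_{\theta^*}\big[\nabla_\theta\log\bar p_{\theta^*}(Y_0\given Y_{-\infty:-1})\cdot\nabla_\theta\log\bar p_{\theta^*}(Y_0\given Y_{-\infty:-1})^{\tr}\big]=(1-p_{\D}^*)I(\theta^*).
\]
The $\epsilon$-perturbation is precisely the device that allows the standard full-past identity for $I(\theta^*)$ to be invoked, bypassing any analysis of predictive scores along a random subsequence of times.
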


The proof can be found in \cref{proof:thm:informationLossPd}. It follows from \cref{thm:informationLossPd} that in the considered configuration the Fisher information matrix $\bsI^{1,\infty}(\theta^*)$ can be made arbitrary close to $0$ by making $p^*_{\D}$ tend to~$0$. Also, there is no loss at all when $p^*_{\D} = 1$, as expected. In order to verify the result of \cref{thm:informationLossPd} in practice, a single-object scenario with detection failures and without false alarms is considered. The object starts at time $t=0$ from the position $x_0 = 0$ and evolves in $\bbX = \bbR$ according to a random walk with standard deviation $0.1$ until time $n=50$. The observation is linear and Gaussian with variance equal to $1$. The integral over the state space in the expression of the score is computed by Monte Carlo simulation with $10^3$ samples while the expectation in the Fisher information utilises $10^4$ samples. The relative information loss is displayed in \cref{fig:lossDetectionFailure} and confirms the coefficient $1-p_{\D}$ found analytically in \cref{thm:informationLossPd}. The next example shows how the Fisher information evolves in general when adding new objects without involving them in data association uncertainty.

\begin{example}
The Fisher information $\bsI^{\alpha,\beta}(\theta^*,K)$ of a $K$-object problem can be related to the information $\bsI^{\alpha,\beta}(\theta^*,K+N)$ where the $N$ new objects are not perturbed by data-association uncertainties, i.e.\ when the random variable $\varsigma$ in the observation model \eqref{eq:perturbedModel} verifies $\varsigma|_D = \id$ almost surely with $ D \defeq \{|\bsd_{1:K+1}|, \dots, |\bsd_{1:N}|\}$. It then follows from \cref{thm:informationLossPd} that
\eqns{
\bsI^{\alpha,\beta}(\theta^*,K+N) = \bsI^{\alpha,\beta}(\theta^*,K) + p_{\D}^* N I(\theta^*)
}
for any $\alpha > 0$ and any $\beta \geq 0$. This example gives an upper bound for the increase of the Fisher information when the number of objects is increased, since it depicts the case where there is no data association uncertainty for these objects. This would correspond in practice to a case where the added objects are in an area where there is no false alarm and where these objects are ``far'' from the existing objects as well as ``far'' from each other, where ``far'' depends on the likelihood.
\end{example}

\section{Conclusion}
\label{sec:conclusion}

The first important result in this article is the proof of consistency of the maximum likelihood estimator for multi-target tracking under \emph{weak} conditions, where weak means that these conditions are as often as possible applying to the single-target dynamics and observation. Asymptotic normality holds under additional assumptions and the second part of the article brings understanding to the asymptotic variance of the maximum likelihood estimate by analysing the Fisher information matrix corresponding to multi-target tracking. Qualitative results are obtained in the general case, that is, the Fisher information decreases with data association uncertainty, detection failures and in the presence of false alarms. Quantitative results are also derived in important special cases:
\begin{enumerate*}[label=\alph*)]
\item one static target with false alarm and unknown data association,
\item multiple static targets with unknown data association under a particular observation model, and
\item multiple targets with detection failures.
\end{enumerate*}

Future works include the study of identifiability of specific observation-to-track associations, instead of marginalising over all possibilities as considered in this article. Such an approach involves additional challenges since the parameters to be learned increase in dimensionality with time, so that it is not a special case of the results presented here.

\section*{Acknowledgement}

All authors were supported by the Singapore ministry of education tier 1 grant number R-155-000-182-114.

\appendix

\section{Assumptions for \texorpdfstring{\cref{thm:consistencyMLE}}{Theorem~\ref{thm:consistencyMLE}}}
\label{sec:assumptionAsymptoticNormality}
The following assumptions are required for the proof of the asymptotic normality of the maximum likelihood estimator in multi-target tracking. The norm $\|\cdot\|$ is defined as $\|M\| = \sum_{i,j} |M_{i,j}|$ for any matrix $M$.
\begin{enumerate}[hyp, resume]
\item \label{it:asymptoticNormality1} For all $K \in S^{\T}$, all $\bsx,\bsx' \in \bbX^K$ and all $\bsy \in \bbY^{\times}$, the mappings $\bstheta \mapsto \bsf_{\bstheta}(\bsx \given \bsx')$ and $\bstheta \mapsto \bsg_{\bstheta}(\bsy \given \bsx)$ are twice continuously differentiable on the hyperplane of $\bsTheta$ made of parameters with a number of target equal to $K$.
\item \label{it:asymptoticNormality2} It holds that
\eqnsa{
\sup_{\bstheta \in \bsTheta} \sup_{\bsx,\bsx' \in \bbX^K} \| \nabla_{\bstheta} \log \bsf_{\bstheta}(\bsx \given \bsx') \| < \infty \quad\text{and}\quad \sup_{\bstheta \in \bsTheta} \sup_{\bsx,\bsx' \in \bbX^K} \| \nabla^2_{\bstheta} \log \bsf_{\bstheta}(\bsx \given \bsx') \| < \infty
}
and that
\eqnsa{
\bar\bbE_{\bstheta^*}\Big[\sup_{\bstheta \in \bsTheta} \sup_{\bsx \in \bbX^K} \| \nabla_{\bstheta} \log \bsg_{\bstheta}(\bsY \given \bsx) \| \Big] & < \infty \\
\bar\bbE_{\bstheta^*}\Big[\sup_{\bstheta \in \bsTheta} \sup_{\bsx \in \bbX^K} \| \nabla^2_{\bstheta} \log \bsg_{\bstheta}(\bsY \given \bsx) \| \Big] & < \infty.
}
\item \label{it:asymptoticNormality3} For all $\bsy \in \bbY^{\times}$, there exists an integrable function $h_{\bsy} : \bigcup_{k\geq 0} \bbX^k \to \bbR^+$ such that $\sup_{\bstheta \in \bsTheta} \bsg_{\bstheta}(\bsy \given \bsx) \leq h_{\bsy}(\bsx)$. For all $\bstheta \in \bsTheta$ and for all $\bsx \in \bbX^K$, there exist integrable functions $h^1_{\bsx},h^2_{\bsx} : \bbY^{\times} \to \bbR^+$ such that
\eqns{
\| \nabla_{\bstheta}\, \bsg_{\bstheta}(\bsy \given \bsx) \| \leq h^1_{\bsx}(\bsy) \AND \| \nabla^2_{\bstheta}\, \bsg_{\bstheta}(\bsy \given \bsx) \| \leq h^2_{\bsx}(\bsy).
}
\end{enumerate}

\section{Proof of \texorpdfstring{\cref{lem:boundedness}}{Lemma~\ref{lem:boundedness}}}
\label{proof:lem:boundedness}
It follows from Assumption~\ref{it:boundIntLikelihood} that the supremum $\bsb^{\C}_+$ of the clutter density $\bsp_{(\lambda,\psi)}$ characterised for any $k \in \bbN_0$ and any $\bsy \in \bbY^k$ by $\bsp_{(\lambda,\psi)}(\bsy) = \Po_{\lambda}(k)\prod_{i=1}^k p_{\psi}(\bsy_i)$ verifies $\bsb^{\C}_+ < \infty$ since
\eqnsa{
\sup_{\lambda \in S^{\C}} \bigg( \sum_{k \geq 0} \sup_{(\psi,\bsy) \in \Psi \times \bbY^k} \bsp_{(\lambda,\psi)}(\bsy) \bigg) & = \sup_{\lambda \in S^{\C}} \sum_{k \geq 0} \dfrac{(\lambda b^{\C}_+)^k e^{-\lambda}}{k!} = \sup_{\lambda \in S^{\C}} e^{\lambda(b^{\C}_+ -1)} < \infty,
}
and since all the terms in the sum are positive. It then holds that
\eqnsa{
\hat\bsb_+ & \leq \sup_{(p_{\D},K,\bsy) \in (0,1) \times S^{\T} \times \bbY^{\times}} \bsb^{\C}_+ \sum_{i = 0}^{K \land \#\bsy} \binom{K}{i} (p_{\D}b^{\T}_+)^i (1-p_{\D})^{K-i} \\
& \leq \sup_{(p_{\D},K) \in (0,1) \times S^{\T}} \bsb^{\C}_+ (1-p_{\D}+p_{\D}b^{\T}_+)^K < \infty,
}
which concludes the first part of the proof. For any $k \in \bbN_0$ and any $\bsy \in \bbY^k$
\eqnsa{
\int \bsg_{\bstheta}(\bsy \given \bsx) \d\bsx
& \geq \sum_{\substack{\bsd \in \{0,1\}^K \\ |\bsd| \leq k}} \prod_{i=1}^k \big[b_-^{\T}(\bsy_i) \land b_-^{\C}(\bsy_i) \big] \Po_{\lambda}(k - |\bsd|) q_{\bstheta}(\bsd) \\
& = \sum_{d=0}^{K\wedge k} \prod_{i=1}^k \big[ b_-^{\T}(\bsy_i) \land b_-^{\C}(\bsy_i) \big] \Po_{\lambda}(k - d) \Bi_{p_{\D}}^K (d) \\
& = \Bi_{p_{\D}}^K * \Po_{\lambda}(k) \prod_{i=1}^k \big[ b_-^{\T}(\bsy_i) \land b_-^{\C}(\bsy_i) \big] \\
& \geq \inf_{\bstheta \in \bsTheta}\Bi_{p_{\D}}^K * \Po_{\lambda}(k) \prod_{i=1}^k \big[ b_-^{\T}(\bsy_i) \land b_-^{\C}(\bsy_i) \big].
}
It also holds that $\inf_{\bstheta \in \bsTheta} \Bi_{p_{\D}}^K * \Po_{\lambda}(k) > 0$ for any $k \in \bbN_0$ since the support of $\Po_{\lambda}$ is $\bbN_0$ for any $\lambda \in S^{\C}$ which guarantees that the convolution has also $\bbN_0$ as a support so that the infimum is strictly greater than zero. It follows that $\bsb_-(\bsy) > 0$ and, considering \eqref{eq:logInfConvBiPo}, that $\bar\bbE_{\bstheta^*}[| \log \bsb_-(\bsY)|] < \infty$. Similarly, for any $k \in \bbN_0$ and any $\bsy \in \bbY^k$
\eqns{
\bsb_+(\bsy) \leq \sup_{\bstheta \in \bsTheta} \Bi_{p_{\D}}^K * \Po_{\lambda}(k) \prod_{i=1}^k \big[ b_+^{\T}(\bsy_i) \lor b_+^{\C}(\bsy_i) \big],
}
which is finite when $k$ is finite. In the infinite case, noticing that $\Po_\lambda(k-K) p^K_{\D}$ is the leading term in the convolution, we find that
\eqnsa{
\lim_{k \to \infty} \sup_{\bsy \in \bbY^k} \bsb_+(\bsy) & \leq \sup_{\bstheta \in \bsTheta} \lim_{k \to \infty} K p^K_{\D} \Po_\lambda(k-K) \Big[ \sup_{y\in \bbY} \big(b_+^{\T}(y) \lor b_+^{\C}(y)\big) \Big]^k \\
& \leq \sup_{\bstheta \in \bsTheta} c_{\bstheta} \lim_{k \to \infty} \dfrac{e^{-\lambda}}{(k-K)!} \Big[ \sup_{y\in \bbY} \big(b_+^{\T}(y) \lor b_+^{\C}(y)\big) \Big]^{k-K} < \infty
}
where $c_{\bstheta}$ is a finite constant, which concludes the proof of the lemma.

\section{Proof of \texorpdfstring{\cref{thm:transferIdentifiability}}{Theorem~\ref{thm:transferIdentifiability}}}
\label{proof:thm:transferIdentifiability}
The two cases of \cref{thm:transferIdentifiability} are proved separately as follows:
\begin{enumerate}[label=\alph*),wide]
\item When $\bstheta \in \bsTheta_{\lambda=0}$, the joint probability of the observations when the system is initialised with its stationary distribution is characterised by
\eqns{
\bar\bsP_{\bstheta}(B) = \int \ind{B}(\bsy_{1:n}) \prod_{t = 1}^n \bsg_{\bstheta}(\bsy_t \given \bsx_t) 
\prod_{i=1}^K \bigg[ \pi_{\theta}(\bsx_{0,i}) \prod_{t = 1}^n f_{\theta}(\bsx_{t,i} \given \bsx_{t-1,i}) \bigg]\d \bsy_{1:n} \d \bsx_{0:n},
}
for any measurable subset $B = B_1 \times \dots \times B_n$ of $(\bbY^{\times})^n$ with
\eqns{
\bsg_{\bstheta}(\bsy \given \bsx) \defeq \sum_{\substack{\bsd \in \{0,1\}^K \\ |\bsd| = m}} \bigg[ \sum_{\sigma \in \Sym(m)} \prod_{i = 1}^{m} g_{\theta}\big(\bsy_{\sigma(i)} \given \bsx_{r(i)}\big) u_m(\sigma) \bsq_{\bstheta}(\bsd) \bigg].
}
for any $m \in \bbN_0$ and any $(\bsx,\bsy) \in \bbX^K \times \bbY^m$. Assuming that $B_t$ is a measurable subset of $\bbY^{K}$ of the form $A_t \times \dots \times A_t$ for any $1 \leq t \leq n$, then the sum over $\bsd$ collapses to a single term where all targets are detected and all the terms in the sum over $\sigma$ are equal, so that
\eqnsa{
w^1_{\bstheta} & \defeq \bar\bsP_{\bstheta}(B) \\
& = p_{\D}^{Kn} \int \prod_{i = 1}^K \bigg[ \pi_{\theta}(\bsx_0) \prod_{t = 1}^n \Big[ \ind{A_t}(\bsy_{t,i}) g_{\theta}\big(\bsy_{t,i} \given \bsx_{t,i}\big) f_{\theta}(\bsx_{t,i} \given \bsx_{t-1,i}) \Big] \bigg]\d \bsy_{1:n} \d \bsx_{0:n}.
}
A second case that can be considered is when $B_t$ represents the configuration where there are $m \leq K$ observations without considering their locations for all $1 \leq t \leq n$, i.e.\ $B_t = \bbY \times \dots \times \bbY$, in which case it holds that
\eqns{
w^{2,m}_{\bstheta} \defeq \bar\bsP_{\bstheta}(B) = \big( \Bi_{p_{\D}}^K(m) \big)^n
}
If $(K,p_{\D}) \neq (K^*,p^*_{\D})$ then we can show that $w^{2,K}_{\bstheta} = w^{2,K}_{\bstheta^*}$ and $w^{2,K-1}_{\bstheta} = w^{2,K-1}_{\bstheta^*}$ cannot hold at the same time for any $\bstheta \in \bsTheta_{\lambda=0}$. Alternatively, if $(K,p_{\D}) = (K^*,p^*_{\D})$ then $w^1_{\bstheta} \neq w^1_{\bstheta^*}$ follows easily from the identifiability of $\theta^*$. These two cases considered together show that the distributions associated to $\bstheta$ and $\bstheta^*$ differ in some subset of the multi-target observation space so that $\bar\bsP_{\bstheta} \neq \bar\bsP_{\bstheta^*}$.
\item When $\bstheta \in \bsTheta|_{K=1}$, the multi-target likelihood becomes
\eqns{
\bsg_{\bstheta}(\bsy \given x) =  (1-p_{\D}) \Po_{\lambda}(m) \prod_{i = 1}^m p_{\psi^*}(\bsy_i) + \dfrac{p_{\D}}{m}\sum_{i=1}^m g_{\theta^*}\big(\bsy_i \given x\big) \Po_{\lambda}(m-1) \prod_{\substack{1\leq j \leq m \\ j \neq i}} p_{\psi^*}(\bsy_j),
}
for any $m \in \bbN_0$ and any $(x,\bsy) \in \bbX \times \bbY^m$. 
Marginalising over the location of the observations at each time step and considering the case where there are $m$ observations, i.e.\ $B_t = \bbY \times \dots \times \bbY$, gives
\eqns{
w^m_{\bstheta} \defeq \bar\bsP_{\bstheta}(B) = (1 - p_{\D})\Po_{\lambda}(m) + p_{\D} \Po_{\lambda}(m-1), \qquad m \geq 1
}
and $w^0_{\bstheta} \defeq ( 1 - p_{\D}) e^{-\lambda}$. Assuming that $\bstheta \neq \bstheta^*$ and considering that $(p_{\D},\lambda) \neq (p_{\D}^*,\lambda^*)$ it follows that $w^0_{\bstheta} = w^0_{\bstheta^*}$, $w^1_{\bstheta} = w^1_{\bstheta^*}$ and $w^2_{\bstheta} = w^2_{\bstheta^*}$ cannot all hold at the same time, 
which concludes the proof. 
\end{enumerate}

\section{Proof of \texorpdfstring{\cref{thm:informationLoss}}{Theorem~\ref{thm:informationLoss}}}
\label{proof:thm:informationLoss}

\begin{lemma}
\label{lem:informationLoss}
For given integers $m$ and $K$, let $\bsP_{\theta}$ be a family of probability measures on $\bbY^{mK}$ indexed by $\theta \in \Theta$ and let $\bsp_{\theta}$ denote the corresponding probability density w.r.t.\ a common reference measure, for all $\theta$, on $\bbY^{mK}$. Assume that $\bsp_{\theta}\left(\bsy_{1},\ldots,\bsy_{m}\right)>0$ for any $\theta$ and $\left(\bsy_{1},\ldots,\bsy_{m}\right)$.

 For any integers $\alpha \geq 1$ and $\beta \geq 0$, let the random vectors $\left(\bsY'_{1},\ldots,\bsY'_{m}\right)$ be conditionally
independent given $\left(\bsY_{1},\ldots,\bsY_{m}\right)$, with law $\bsP^{\alpha,\beta}_{\bsY'_{1:m}\vert\bsY_{1:m}}=\bsP^{\alpha,\beta}_{\bsY'_{1}\vert\bsY_{1}}\ldots\bsP^{\alpha,\beta}_{\bsY'_{m}\vert\bsY_{m}}$
and each $\bsP^{\alpha,\beta}_{\bsY'_{i}\vert\bsY_{i}}$ is defined
as in (\ref{eq:perturbedModel}) via a process of thinning, augmentation with clutter with density $p_{\psi}$ on $\bbY$ and random permutation. Assume $p_{\psi} >0$.
\begin{enumerate}
\item Consider any $\theta$ and $(\alpha,\beta)$ such that $\alpha > 1$ or $\beta > 0$. If $f(\bsY_1,\ldots,\bsY_m)=\bbE[ f(\bsY_1,\ldots,\bsY_m) | \bsY'_{1},\ldots,\bsY'_{m} ]$ then $f(\bsY_1,\ldots,\bsY_m)$ is constant almost surely.
\item  Let the probability measure of $\left ( \bsY'_{1},\ldots,\bsY'_{m}\right)$ be $\bsP^{\alpha,\beta}_{\theta}$ and its corresponding probability density be $\bsp^{\alpha,\beta}_{\theta}$. Assume that the densities $\bsp_{\theta}$ and $\bsp^{\alpha,\beta}_{\theta}$ 
are differentiable w.r.t.\ $\theta$  then
\begin{multline}
\label{eq:lem:informationLoss}
\bbE [ \nabla_{\theta} \log \bsp_{\theta}(\bsY_1,\ldots,\bsY_m) \cdot \nabla_{\theta} \log \bsp_{\theta}(\bsY_1,\ldots,\bsY_m)^{\tr} ] \\
\geq \bbE[ \nabla_{\theta} \log \bsp^{\alpha,\beta}_{\theta}(\bsY'_1,\ldots,\bsY'_m) \cdot \nabla_{\theta} \log \bsp^{\alpha,\beta}_{\theta}(\bsY'_1,\ldots,\bsY'_m)^{\tr} ],
\end{multline}
with the inequality being strict if and only if $\alpha > 1$ or $\beta > 0$ and if the l.h.s.\ is strictly greater than $0$.
\end{enumerate}
\end{lemma}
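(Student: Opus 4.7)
The plan is to treat Part~2 as essentially a data-processing inequality for the Fisher information, modulo Part~1, which is the substantive step. Since the conditional law $\bsP^{\alpha,\beta}_{\bsY'_{1:m}\vert\bsY_{1:m}}$ does not depend on $\theta$, I can write $\bsp^{\alpha,\beta}_{\theta}(\bsy') = \int \kappa(\bsy'\vert\bsy) \bsp_{\theta}(\bsy) \d\bsy$ for a $\theta$-free (mixed discrete/continuous) kernel density $\kappa$. Differentiating under the integral (justified by the stated regularity) yields the Fisher identity
\begin{equation*}
\nabla_{\theta} \log \bsp^{\alpha,\beta}_{\theta}(\bsy') = \bbE\big[ \nabla_{\theta} \log \bsp_{\theta}(\bsY_{1:m}) \,\big|\, \bsY'_{1:m} = \bsy'\big].
\end{equation*}
Writing $s = \nabla_{\theta} \log \bsp_{\theta}(\bsY_{1:m})$ and $s' = \bbE[s\vert\bsY'_{1:m}]$, \eqref{eq:lem:informationLoss} then follows from the identity
\begin{equation*}
\bbE[s s^{\tr}] - \bbE[s' s'^{\tr}] = \bbE[(s-s')(s-s')^{\tr}] \geq 0
\end{equation*}
in the positive semi-definite order. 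Equality in the direction of any vector $v$ forces $v^{\tr} s = \bbE[v^{\tr} s\vert\bsY'_{1:m}]$ almost surely, i.e.\ the hypothesis of Part~1 applied to $f = v^{\tr} s$; this yields $v^{\tr} s$ almost surely constant, hence zero (since $\bbE[s]=0$ by standard Fisher regularity), contradicting positivity of the left-hand side of \eqref{eq:lem:informationLoss} in that direction.

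For Part~1, observe that $f(\bsY_{1:m}) = \bbE[f(\bsY_{1:m})\vert\bsY'_{1:m}]$ gives $\mathrm{Var}(f(\bsY_{1:m})\vert\bsY'_{1:m}) = 0$ almost surely, so $f(\bsY_{1:m}) = g(\bsY'_{1:m})$ for some measurable $g$. Consequently, for $\bsP_{\theta}$-a.e.\ $\bsy$, the equality $f(\bsy) = g(\bsy')$ holds on the support of $\kappa(\cdot\vert\bsy)$. Because the kernel factorises across the $m$ blocks, I can argue one block at a time on $\bbY^K$. The strategy is to combine the randomness in $\varsigma$, $\bsD$ and $\hat\bsY$ to exhibit \emph{collisions}: pairs $(\bsy,\tilde{\bsy})$ for which the supports of $\kappa(\cdot\vert\bsy)$ and $\kappa(\cdot\vert\tilde{\bsy})$ share a positive-density realisation $\bsy'$, forcing $f(\bsy) = f(\tilde{\bsy})$.

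When $\alpha > 1$, $A_k^{\alpha}$ contains transpositions; combined with the positivity of $p_{\psi}$, so that at least one clutter point $\hat\bsY_j$ can land anywhere in $\bbY$, any $\bsy'$ whose entries are $K-1$ of the target coordinates together with an extra point $c \in \bbY$ lies in the support of $\kappa(\cdot\vert(y_1,\ldots,y_K))$ (identity permutation, clutter $c$) and also in that of $\kappa(\cdot\vert(c,y_2,\ldots,y_K))$ (transposition sending the clutter slot to position 1, clutter $y_1$). Hence $f$ does not depend on its first argument, and by the symmetry of $u_k^{\alpha}$ and $\bsq^{\beta}_{\bstheta}$ under coordinate permutations the same holds for every coordinate, so $f$ is constant. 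When $\beta > 0$, positive probability is assigned to $\bsd$ with $\vert\one-\bsd\vert=1$, producing realisations of $\bsy'$ of reduced length that do not involve the dropped target's value; this directly shows $f$ does not depend on that coordinate, and the same symmetry argument gives constancy. Iterating across the $m$ blocks closes Part~1.

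The main obstacle will be the rigorous handling of these collision events on a $\bsP_{\theta}$-almost-everywhere basis: the permutation and thinning components are discrete while the clutter component is continuous, and $\bsY'_i$ has variable length, so the support of $\kappa(\cdot\vert\bsy)$ is a singular measure on $\bbY^{\times}$. The delicate bookkeeping is to construct, for $\bsP_{\theta}^{\otimes 2}$-a.e.\ pair $(\bsy,\tilde{\bsy})$ on the appropriate cross-section, a positive-density witness $\bsy'$ common to both kernels while respecting the null-set structure inherited from $\bsp_{\theta}$. This is done by integrating out the clutter coordinates against the strictly positive density $p_{\psi}$ and applying a Fubini-type argument to promote the pointwise identity $f(\bsy)=f(\tilde{\bsy})$ to the required almost-sure statement.
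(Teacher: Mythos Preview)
Your proposal is correct and follows essentially the same route as the paper: Part~2 is the Fisher identity plus conditional Jensen (the paper writes this via $v^{\tr}s$ and $x\mapsto x^2$, you via the conditional-variance decomposition $\bbE[ss^{\tr}]-\bbE[s's'^{\tr}]=\bbE[(s-s')(s-s')^{\tr}]$), with strictness reduced to Part~1 exactly as in the paper; Part~1 is the same mechanism of conditioning on each realisation of $(\varsigma,\bsD,\#\hat\bsY)$ with positive probability, obtaining a family of a.s.\ identities, and pushing them through a change of variables licensed by the mutual absolute continuity that the positivity hypotheses $\bsp_\theta>0$, $p_\psi>0$ provide---what you call the ``collision'' argument is precisely the content of the paper's \cref{lem:multpermute}/\cref{cor:multperm_clutter} and \cref{lem:multidel}. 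One small slip: in your $\alpha>1$ case the shared $\bsy'$ has $K+1$ entries (all $K$ target coordinates plus the clutter point $c$), not $K-1$ plus~$c$; the transposition you want is $(1,K{+}1)$ applied to $(c,y_2,\ldots,y_K,y_1)$.
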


\begin{proof}[Proof of Part 1 of \cref{lem:informationLoss}]
When $\alpha = 1$ and $\beta>0$, which  corresponds to no random permutation (or no observation association uncertainty) but only random thinning, the result follows from \cref{lem:multidel} and \cref{rem:strongerHypMultiDel}.
When $\alpha > 1$ and $\beta=0$, which  corresponds to no random thinning but only random permutation, the result follows from \cref{cor:multperm_vec}. When $\alpha \geq 1$ and $\beta>0$, i.e.\ both random thinning and random permutation are present, the result of \cref{lem:multidel} based on $\bsY_D$, i.e.\ for $(\alpha=1,\beta>0)$, can be extended to the random variable $\bsY'_D = S_{\varsigma}(\bsY_D \oplus \hat\bsY)$ as follows:
\begin{enumerate}[wide]
\item \label{it:permMakesCoarse} By noticing that $\sigma(\bsY'_D) \subseteq \sigma(\varsigma, \bsY_D, \hat\bsY)$ (i.e.\ the $\sigma$-algebra generated by $\bsY'_D$ is coarser than the one generated by $\bar\bsY_D \defeq (\varsigma, \bsY_D, \hat\bsY)$). Indeed, for any $C = B_1 \times B_2 \times \dots \in \calB(\bbY^{\times})$ with all but finitely many $B_i$ equal to $\bbY$, it holds that
\eqns{
\bsY'^{-1}_D(C) = \bigcup_{\sigma} \bar\bsY_D^{-1}(\{\sigma\} \times B_{\sigma(1)} \times B_{\sigma(2)} \times \dots) \in \sigma(\bar\bsY_D)
}
where the inclusion follows by countable union. The family of subsets of the same form as $C$ is a generating family of $\calB(\bbY^{\times})$ from which the result follows. The result extends straightforwardly to any collection of $m$ thinned observation vectors being independently perturbed by false alarm and permutation since the corresponding $\sigma$-algebra is generated by the set of rectangles of the form $C_1 \times \dots \times C_m$ with $C_i \in \calB(\bbY^{\times})$.
\item As a consequence of \ref{it:permMakesCoarse}, the fact that $f(\bsY) = \bbE[f(\bsY) \given \bsY'_D]$ a.s.\ implies that
\eqns{
\bbE[f(\bsY) \given \bar\bsY_D] = \bbE\big[ \bbE[f(\bsY) \given \bsY'_D] \given \bar\bsY_D \big] = \bbE[f(\bsY) \given \bsY'_D] \quad a.s.,
}
so that, since $\hat\bsY$, $\varsigma$ and $\bsY_D$ are independent, it holds that
\eqns{
f(\bsY) = \bbE[f(\bsY) \given \bar\bsY_D] = \bbE[f(\bsY) \given \bsY_D] \quad a.s.,
}
which under the assumption of \cref{lem:multidel} implies that $f(\bsY)$ is constant almost surely. As in \ref{it:permMakesCoarse}, the same result holds for collection of $m$ observation vectors.
\end{enumerate}
\end{proof}

\begin{proof}[Proof of Part 2 of \cref{lem:informationLoss}]

Let $\bsY$ be the $K$ measurements of $K$ targets, $\hat{\bsY}$ be the clutter, $\varsigma$ the random permutation, $\bsD$ the $K$ dimensional vector of deletions. Let $\bsY'=S_{\varsigma}((R_{\bsD}\bsY)\oplus\hat{\bsY})$. The missing target generated observations are $\bsY_{\m} =R_{\bsD'}Y$ where $\bsD'=\one-\bsD$.

Let $\bsp_{\bsY',\varsigma,\bsD,\bsY_{\m}}(\bsy',\sigma,\bsd,\bsy_{m})$ denote the joint pdf/pmf of $(\bsY',\bsY_{\m},\bsD,\varsigma)$ that depends implicitly on $\theta$. Using the change of variable formula, noting that $(\bsY,\hat{\bsY})=F(\bsY',\bsY_{\m},\bsD,\varsigma)$ where the mapping $F$ is a permutation of $(\bsY',\bsY_{\m})$ and hence the Jacobian of the transformation has determinant~$1$, it follows that
\eqns{
\bsp_{\bsY',\bsY_{\m},\bsD,\varsigma}(\bsy',\bsy_{\m},\bsd,\sigma) = \bsp_{\bsY,\hat{\bsY},\bsD,\varsigma}(F(\bsy',\bsy_{\m},\bsd,\sigma),\bsd,\sigma)
}
for $\bsy'=S_{\sigma}((R_{\bsd}\bsy)\oplus\hat{\bsy})$, $\bsy_{\m} = R_{\bsd'}\bsy$ where $\bsd' = \one-d$. Since it holds that $\nabla_{\theta}\log \bsp_{\bsY,\hat{\bsY},\bsD,\varsigma}(\bsy,\hat{\bsy},\bsd,\sigma) = \nabla_{\theta}\log \bsp_{\bsY}(\bsy)$, we deduce that
\eqns{
\nabla_{\theta} \log \bsp_{\bsY',\bsY_{\m},\bsD,\varsigma}(\bsy',\bsy_{\m},\bsd,\sigma) = \nabla_{\theta} \log \bsp_{\bsY}(F_{\T}(\bsy',\bsy_{\m},\bsd,\sigma))
}
where $F_{\T}(\bsy',\bsy_{\m},\bsd,\sigma)$ is the projection of $F(\bsy',\bsy_{\m},\bsd,\sigma)$ on the coordinates describing~$\bsY$.
Now it follows that, almost surely,
\eqns{
\nabla_{\theta} \log \bsp_{\bsY',\bsY_{\m},\bsD,\varsigma}(\bsY',\bsY_{\m},\bsD,\varsigma) = \nabla_{\theta} \log \bsp_{\bsY}(\bsY).
}
Let $\bsp_{\bsY'}$ denote the density of $\bsY'$. Then 
\eqnl{eq:lemma7Part2Fisher}{
\nabla_{\theta} \log \bsp_{\bsY'}(\bsY') = \bbE_{\bsP_{\theta}}[ \nabla_{\theta} \log \bsp_{\bsY}(\bsY)\mid \bsY' ].
}
Applying \eqref{eq:lemma7Part2Fisher} to the joint random variables $\bsY_{1:m}$ and $\bsY'_{1:m}$ defined in the lemma, it follows that
\eqns{
\nabla_{\theta} \log \bsp^{\alpha,\beta}_{\theta}(\bsY'_{1:m}) = \bbE_{\bsP_{\theta}}\big[ \nabla_{\theta} \log \bsp_{\theta}(\bsY_{1:m}) \given \bsY'_{1:m} \big].
}
Let $v \in \bbR^{d_{\Theta}}$, then Jensen's inequality applied to the function $x \mapsto x^2$ and to the random variable $v^{\tr} \nabla_{\theta} \log \bsp_{\theta}(\bsY_{1:m})$ yields
\eqns{
\bbE_{\bsP_{\theta}} \big[ v^{\tr} \nabla_{\theta} \log \bsp_{\theta}(\bsY_{1:m}) \given \bsY'_{1:m} \big]^2 \leq \bbE_{\bsP_{\theta}}\big[ \big(v^{\tr} \nabla_{\theta} \log \bsp_{\theta}(\bsY_{1:m}) \big)^2 \given \bsY'_{1:m} \big]
}
almost surely, so that
\eqnsa{ 
v^{\tr} \bbE_{\bsP^{\alpha,\beta}_{\theta}}[ \nabla_{\theta} \log \bsp^{\alpha,\beta}_{\theta}(\bsY_{1:m}) \cdot \nabla_{\theta} & \log \bsp^{\alpha,\beta}_{\theta}(\bsY_{1:m})^{\tr} ]v \\
& = \bbE_{\bsP^{\alpha,\beta}_{\theta}} \big[ \bbE_{\bsP_{\theta}} \big[ v^{\tr} \nabla_{\theta} \log \bsp_{\theta}(\bsY_{1:m}) \given \bsY'_{1:m} \big]^2 \big] \\
& \leq v^{\tr} \bbE_{\bsP_{\theta}}[ \nabla_{\theta} \log \bsp_{\theta}(\bsY_{1:m}) \cdot \nabla_{\theta} \log \bsp_{\theta}(\bsY_{1:m})^{\tr} ] v
}
which proves \eqref{eq:lem:informationLoss}. Since Jensen's inequality has been applied to a strictly convex function, the case of equality:
\eqnsml{
v^{\tr} \bbE_{\bsP^{\alpha,\beta}_{\theta}}[ \nabla_{\theta} \log \bsp^{\alpha,\beta}_{\theta}(\bsY_{1:m}) \cdot \nabla_{\theta} \log \bsp^{\alpha,\beta}_{\theta}(\bsY_{1:m})^{\tr} ]v \\
= v^{\tr} \bbE_{\bsP_{\theta}}[ \nabla_{\theta} \log \bsp_{\theta}(\bsY_{1:m}) \cdot \nabla_{\theta} \log \bsp_{\theta}(\bsY_{1:m})^{\tr} ] v,
}
holds if and only if, for all $v \in \bbR^{d_{\Theta}}$, $v^{\tr} \nabla_{\theta} \log \bsp_{\theta}(\bsY_{1:m}) = \bbE[ v^{\tr} \nabla_{\theta} \log \bsp_{\theta}(\bsY_{1:m}) \given \bsY'_{1:m} ]$ is $\sigma(\bsY'_{1:m})$-measurable. Part 1 of the lemma shows that $\nabla_{\theta} \log \bsp_{\theta}(\bsY_{1:m})$ is $\sigma(\bsY'_{1:m})$-measurable if and only if it is constant a.s.. Given that $\bbE_{\bsP_{\theta}}[\nabla_{\theta} \log \bsp_{\theta}(\bsY_{1:m})] = 0$ it follows that the function itself is equal to $0$ since it is constant, hence proving the lemma.
\end{proof}

The observation model \eqref{eq:perturbedModel} does not imply the equality of the gradients of $\log \bsp_{\bstheta}(\bsY', \bsY)$ and $\log \bsp_{\bstheta}(\bsY)$ w.r.t.\ $\bstheta$ since $\bsP_{\theta}(\d\bsY' \given \bsY)$ depends on $p_{\D}$, $\lambda$ and $\psi$ which are parameters included in $\bstheta$. The interest is however in the information loss w.r.t.\ $\theta$ so that the result of \cref{lem:informationLoss} is satisfying.

\begin{proof}[Proof of \cref{thm:informationLoss}]
The considered perturbed observation model has the same properties as the one studied in \cite{Dean2014}, i.e.\ that $\nabla_{\theta} \log \bsp^{\alpha,\beta}_{\theta}(\bsY, \bsY^{\alpha,\beta}) = \nabla_{\theta} \log \bsp_{\theta}(\bsY)$ holds almost surely. The result of \cite[Lemma~3]{Dean2014} and \cite[Remark~9]{Dean2014} can therefore be used directly in the context of interest to give
\eqns{
\bsI^{\alpha,\beta}_{\loss}(\theta^*) = \bar\bbE_{\theta^*}\Big[ \bsI^{(m)}_{\bsY_{-\infty:-1},\bsY^{\alpha,\beta}_{m:\infty}}(\theta^*) \Big],
}
where, for any $\bsY_{-\infty:-1}$, any $\bsY^{\alpha,\beta}_{m:\infty}$ and any integer $m \geq 1$,
\eqnsa{
\bsI^{(m)}_{\bsY_{\infty:-1},\bsY^{\alpha,\beta}_{m:\infty}}(\theta^*) = & \dfrac{1}{m} \bar\bbE_{\theta^*}\Big[ \nabla_{\theta} \log \bsp_{\theta^*}\big(\bsY_{0:m-1} \given \bsY_{-\infty:-1},\bsY^{\alpha,\beta}_{m:\infty}\big) \cdot \\
& \qquad\qquad \nabla_{\theta} \log \bsp_{\theta^*}\big(\bsY_{0:m-1} \given \bsY_{-\infty:-1},\bsY^{\alpha,\beta}_{m:\infty}\big)^{\tr} \Given \bsY_{-\infty:-1},\bsY^{\alpha,\beta}_{m:\infty} \Big] \\
& -\dfrac{1}{m} \bar\bbE_{\theta^*}\Big[ \nabla_{\theta} \log \bsp_{\theta^*}\big(\bsY_{0:m-1} \given \bsY_{-\infty:-1},\bsY^{\alpha,\beta}_{m:\infty}\big) \cdot \\
& \qquad\qquad \nabla_{\theta} \log \bsp_{\theta^*}\big(\bsY_{0:m-1} \given \bsY_{-\infty:-1},\bsY^{\alpha,\beta}_{m:\infty}\big)^{\tr} \Given \bsY_{-\infty:-1},\bsY^{\alpha,\beta}_{m:\infty} \Big].
}
The objective is to prove that
\eqnl{eq:condProofInfLoss}{
\bar\bbE_{\theta^*}\Big[\bsI^{(m)}_{\bsY_{-\infty:-1},\bsY^{\alpha,\beta}_{m:\infty}}(\theta^*)\Big] = 0
}
for all $m\geq1$ implies that $I(\theta^*) = 0$. From \cref{lem:informationLoss} applied to the involved conditional laws, \eqref{eq:condProofInfLoss} implies that $\nabla_{\theta} \log \bsp_{\theta^*}(\bsY_{0:m-1} \given \bsY_{-\infty:-1},\bsY^{\alpha,\beta}_{m:\infty}) = 0$ almost surely for almost all $\bsY_{-\infty:-1}$ and almost all $\bsY^{\alpha,\beta}_{m:\infty}$. Following the same principle as in \cite[Lemma~4]{Dean2014}, it follows that if \eqref{eq:condProofInfLoss} holds for all $m \geq 1$ then $\nabla_{\theta} \log \bsp_{\theta^*}(\bsY_0 \given \bsY_{-\infty:-1}) = 0$ almost surely, which in turn implies that $I(\theta^*)=0$.
\end{proof}

\subsection{Supporting results for the proof of Part 1 of Lemma \ref{lem:informationLoss}} \label{subsec_appendix_informationLoss} 
\hfill

\subsubsection{Supporting results for the proof of \cref{lem:informationLoss} for permutation uncertainty but no deletion, i.e. $(\alpha>1, \beta=0)$}

\begin{lemma}[Randomly permuting a random vector.]
\label{lem:multpermute}
Let $\bsY=(Y_{1},\ldots,Y_{n})$ and $\hat{\bsY}=(Y_{n+1},\ldots,Y_{n+m})$. Let $\varsigma$ denote the randomised permutation which is independent of $(\bsY,\hat{\bsY})$ and let $\mathbf{Z}=(Y_{\varsigma(1)},\ldots,Y_{\varsigma(m+n)})$.  Assume $\varsigma$ permits, at the least, the exchange of any two indices, i.e. 
\[
\mathbb{P} (\varsigma(i)=j,\varsigma(j)=i, \{ \varsigma(k)=k:k\neq i,j \}  )>0
\]
for all $i,j$. Furthermore, $\mathbb{P} (\varsigma=(1,\ldots,n) )>0$. Assume the law of $(\bsY,\hat{\bsY})$ satisfies $\nu^{n+m}\gg\mathbb{P}_{\bsY,\hat{\bsY}}\gg\nu^{n+m}$ where $\nu$ is some probability measure and $\nu^{n+m}$ the product probability measure on $\mathbb{Y}^{n+m}$. 

If $f(\bsY)=\mathbb{E} [ f(\bsY)\vert\mathbf{Z} ]$, then $f(\bsY)$ is a constant almost surely.
\end{lemma}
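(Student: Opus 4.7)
The hypothesis $f(\bsY)=\bbE[f(\bsY)\mid\mathbf{Z}]$ forces $f(\bsY)$ to be $\sigma(\mathbf{Z})$-measurable, so there exists a measurable $g\colon\bbY^{n+m}\to\bbR$ with $f(\bsY)=g(\mathbf{Z})$ almost surely. The plan is to extract two functional identities for $g$, and then for $f$, by conditioning on the events $\{\varsigma=\sigma\}$ of positive probability granted by the hypotheses on $\varsigma$, exploiting the independence of $\varsigma$ from $(\bsY,\hat\bsY)$ together with the equivalence $\bbP_{\bsY,\hat\bsY}\sim\nu^{n+m}$ to convert almost-sure relations into $\nu^{n+m}$-a.e.\ statements.

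First, conditioning on $\varsigma$ being the identity, which has positive probability and is independent of $(\bsY,\hat\bsY)$, gives $f(Y_{1:n})=g(Y_{1:n+m})$ almost surely; the equivalence of laws turns this into $g(y_{1:n+m})=f(y_{1:n})$ for $\nu^{n+m}$-almost every $y_{1:n+m}$, so $g$ is $\nu^{n+m}$-a.e.\ independent of its last $m$ arguments. Next, for each $i\in\{1,\dots,n\}$ and $j\in\{n+1,\dots,n+m\}$ the transposition $\pi_{ij}$ exchanging $i$ and $j$ occurs with positive probability under $\varsigma$ by assumption, so conditioning on $\{\varsigma=\pi_{ij}\}$ yields $f(Y_{1:n})=g(\pi_{ij}(Y_{1:n+m}))$ almost surely. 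Because $\pi_{ij}$ preserves the product measure $\nu^{n+m}$, the identity for $g$ established above can be pushed through the substitution, and since $\pi_{ij}$ places the coordinate $y_j$ into position $i$, the right-hand side simplifies to $f(y_1,\dots,y_{i-1},y_j,y_{i+1},\dots,y_n)$.

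Consequently $f(y_1,\dots,y_n)=f(y_1,\dots,y_{i-1},y_j,y_{i+1},\dots,y_n)$ for $\nu^{n+m}$-a.e.\ $y$, and a Fubini argument reduces this to an identity on $\nu^{n+1}$-a.e.\ $(y_{1:n},y_j)$. Fixing a $\nu^{n-1}$-full set of the coordinates other than $y_i$, the identity forces $y_i\mapsto f(y_{1:n})$ to be $\nu$-a.e.\ constant. Running this for every $i\in\{1,\dots,n\}$ shows that $f$ is $\nu^n$-a.e.\ constant; passing through the equivalence $\bbP_{\bsY}\sim\nu^n$, inherited as a marginal of $\bbP_{\bsY,\hat\bsY}\sim\nu^{n+m}$, delivers the conclusion that $f(\bsY)$ is constant almost surely. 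The delicate step is keeping track of null sets so that substituting one $\nu^{n+m}$-a.e.\ identity into another is legitimate; this rests entirely on the invariance of $\nu^{n+m}$ under transpositions and on the equivalence with a product measure, and is the only part of the argument that is not purely combinatorial.
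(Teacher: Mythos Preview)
Your argument is correct and follows essentially the same route as the paper's proof: condition on $\{\varsigma=\sigma\}$ for permutations of positive probability to obtain $f(\bsY)=g(Y_{\sigma(1)},\ldots,Y_{\sigma(n+m)})$ a.s., use the identity permutation to reduce $g$ to $f$ on the first $n$ coordinates, use transpositions between a signal index $i\le n$ and a clutter index $j>n$ together with the invariance of $\nu^{n+m}$ under coordinate swaps to deduce $f(y_{1:n})=f(y_1,\ldots,y_{i-1},y_j,y_{i+1},\ldots,y_n)$ $\nu^{n+1}$-a.e., and then apply a Fubini/independence argument under the product measure to conclude constancy. The paper carries this out explicitly only for the illustrative case $(n,m)=(2,1)$ and asserts the general case follows in the same way; your write-up simply executes the general case directly, which is a cosmetic rather than a substantive difference.
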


\begin{proof}[Proof of \cref{lem:multpermute}] The proof is completed for the case $m=1$ and easily generalised to $m>1$. Likewise, to present the arguments we employ in the clearest way, we consider the case $(n=2,m=1)$. (The case $n+m=2$ sheds too little light to make the generalisation apparent.) 

Let $g(\mathbf{Z})=\mathbb{E} [f(\bsY)\vert\mathbf{Z} ]$. For any $\sigma$ such that $\mathbb{P} (\varsigma=\sigma )>0$,
\[
\mathbb{E} [ |f(\bsY)-g(Y_{\sigma(1)},\ldots,Y_{\sigma(n+1)}) |\mathbb{I}_{[\varsigma=\sigma]} ] =0.
\]
Since $\varsigma$ is independent of $(\bsY,\hat{\bsY})$, we have 
\[
\mathbb{E} [ |f(\bsY)-g(Y_{\sigma(1)},\ldots,Y_{\sigma(n+1)}) | ] =0
\]
and thus $g(Y_{\sigma(1)},\ldots,Y_{\sigma(n+1)})=g(Y_{\sigma'(1)},\ldots,Y_{\sigma'(n+1)})$ almost surely for any other $\sigma'$ such that $\mathbb{P} (\varsigma=\sigma' )>0$. 

Now consider the case $(n=3,m=1)$. The preceding statements imply, $\mathbb{P}_{\bsY,\overline{\bsY}}$ almost everywhere,
\begin{align}
g(Y_{1},Y_{2},Y_{3}) & =f(Y_{1},Y_{2})\label{eq:proof_multpermute_eq1}\\
g(Y_{1},Y_{2},Y_{3}) & =g(Y_{3},Y_{2},Y_{1})\label{eq:proof_multpermute_eq2}\\
g(Y_{1},Y_{2},Y_{3}) & =g(Y_{1},Y_{3},Y_{2}).\label{eq:proof_multpermute_eq2-1}
\end{align}
Indeed these also hold $\nu^{3}$ almost every. (Due to the assumption of mutual absolutely continuity, statements holds $\mathbb{P}_{\bsY,\overline{\bsY}}$ almost everywhere if and only if they hold $\nu^{3}$ almost every.) We will show that the further implication 
\begin{equation}
f(Y_{1},Y_{2})=f(Y_{3},Y_{2})=f(Y_{1},Y_{3})\label{eq:proof_multpermute_eq2-2}
\end{equation}
holds $\nu^{3}$ almost everywhere may be derived. Once this is done, to complete the proof, we will further manipulate (\ref{eq:proof_multpermute_eq2-2}) under the assumption that the random variables $Y_{i}$ are independently and identically distributed with respect to measure $\nu$ to show that $f=c$, for some constant $c$, $\nu^{3}$ almost everywhere.

From the first equality of (\ref{eq:proof_multpermute_eq2-2}),
\eqns{
f(Y_{1},Y_{2})=\mathbb{E}_{\nu^{3}} ( f(Y_{3},Y_{2}) |Y_{1},Y_{2} )=\mathbb{E}_{\nu^{3}} ( f(Y_{3},Y_{2}) |Y_{2} )=h(Y_{2})
}
for some function $h$. That is $f(Y_{1},Y_{2})$ collapses to a function of variable $Y_{2}$ only, which is denoted by $h(Y_{2})$. Using the second equality of (\ref{eq:proof_multpermute_eq2-2}), $h(Y_{2})=f(Y_{1},Y_{3})$ and thus it must be that $h$ is a constant as $Y_{i}$ are independent.

We now verify (\ref{eq:proof_multpermute_eq1})-(\ref{eq:proof_multpermute_eq2}) implies $f(Y_{1},Y_{2})=f(Y_{3},Y_{2})$ of (\ref{eq:proof_multpermute_eq2-2}).  We have 
\[
\mathbb{E}_{\nu^{3}} [ |f(Y_{1},Y_{2})-g(Y_{3},Y_{2},Y_{1}) | ] = 0
\]
and a change of variable gives $\mathbb{E}_{\nu^{3}} [ |f(Y_{3},Y_{2})-g(Y_{1},Y_{2},Y_{3}) | ] =0$.  The same procedure applied to (\ref{eq:proof_multpermute_eq1})-(\ref{eq:proof_multpermute_eq2-1}) shows the second equality of (\ref{eq:proof_multpermute_eq2-2}).
\end{proof}

\Cref{cor:multperm_clutter} extends \cref{lem:multpermute} to the situation when $\hat{\bsY}$ therein follows the law of a clutter process as defined in Section \ref{ssec:atlObservationModel} (see \ref{eq:perturbedModel}.)

\begin{corollary}
\label{cor:multperm_clutter} Let $(\hat{Y}_{1},\hat{Y}_{2},\ldots)$ be an infinite sequence of independent $\mathbb{Y}$-valued random variables with $\hat{Y}_{i}\sim P_{\psi}$. Let $\bsY=(Y_{1},\ldots,Y_{K})$ be a vector of $\mathbb{Y}$-valued random variables which is independent of $(\hat{Y}_{1},\hat{Y}_{2},\ldots)$. Let $\hat{M}\in\mathbb{N}_{0}$ be non-negative random variable independent of $ (\bsY,\hat{Y}_{1:\infty} )$.  Let $\mathbf{Z}=S_{\varsigma} (\bsY\oplus\hat{\bsY} )$ where $\hat{\bsY}=\hat{Y}_{1:\hat{M}}$ and $S_{\varsigma}$ is the random permutation matrix defined as in (\ref{eq:perturbedModel}) of Section \ref{ssec:atlObservationModel}.  Assume $ (P_{\psi} )^{K}\gg\mathbb{P}_{\bsY}\gg (P_{\psi} )^{K}$. If $f(\bsY)=\mathbb{E} [ f(\bsY)\vert\mathbf{Z} ]$, then $f(\bsY)$ is a constant almost surely.
\end{corollary}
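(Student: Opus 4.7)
The plan is to reduce the statement to \cref{lem:multpermute} by conditioning on the random clutter cardinality $\hat{M}$. First, $\mathbf{Z}$ is a vector in $\bbY^{\times}$ of length $K+\hat{M}$, and since $K$ is a deterministic constant, $\hat{M}$ is $\sigma(\mathbf{Z})$-measurable through $\hat{M} = \#\mathbf{Z} - K$. Hence the hypothesis $f(\bsY) = \bbE[f(\bsY) \mid \mathbf{Z}]$ propagates to each event $\{\hat{M} = m\}$ with $\bbP(\hat{M}=m) > 0$, on which the problem becomes the fixed-length setting of \cref{lem:multpermute} with $n = K$ and clutter size $m$.

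Next, I would verify the hypotheses of \cref{lem:multpermute} under this conditioning. Since $\hat{M}$ is independent of $(\bsY, \hat{Y}_{1:\infty})$, the conditional law of $(\bsY, \hat{Y}_{1:m})$ given $\{\hat{M}=m\}$ coincides with the unconditional law $\bbP_{\bsY} \otimes (P_{\psi})^m$, and combined with the assumption $(P_{\psi})^{K} \gg \bbP_{\bsY} \gg (P_{\psi})^{K}$ this yields the mutual absolute continuity with $(P_{\psi})^{K+m}$ required by \cref{lem:multpermute}. The random permutation $\varsigma$ is drawn uniformly from $A^{\alpha}_{K+m}$ with $\alpha \geq 2$ implicitly assumed (the corner case $\alpha = 1$ forces $\varsigma = \id$, under which the corollary's conclusion clearly fails and the statement is understood to exclude it). For $\alpha \geq 2$, $A^{\alpha}_{K+m}$ contains the identity and every transposition, so $\bbP(\varsigma = \id) > 0$ and the pairwise exchange positivity condition both hold, and $\varsigma$ is independent of $(\bsY, \hat{Y}_{1:m})$ by construction.

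Applying \cref{lem:multpermute} on each slice $\{\hat{M} = m\}$ then yields constants $c_m$ such that $f(\bsY) = c_m$ almost surely on that event. The final step uses independence once more: because $f(\bsY)$ depends only on $\bsY$ and the conditional law of $\bsY$ given $\{\hat{M} = m\}$ is $\bbP_{\bsY}$, the identity $f(\bsY) = c_m$ holds $\bbP_{\bsY}$-a.s.\ for every admissible $m$, which forces all the $c_m$ to coincide with a single constant $c$ and yields $f(\bsY) = c$ almost surely. The main subtlety sits at the bookkeeping level, namely verifying that $\sigma(\mathbf{Z})$-measurability of $\hat{M}$ transfers the conditional-expectation identity cleanly to each slice and that the permutation-clutter-target independence structure is preserved under this slicing; once that is established, the invocation of \cref{lem:multpermute} is direct.
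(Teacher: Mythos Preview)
Your proposal is correct and follows essentially the same route as the paper: condition on $\{\hat{M}=m\}$, use the independence of $\hat{M}$ from $(\bsY,\hat{Y}_{1:\infty})$ to see that the conditional setup meets the hypotheses of \cref{lem:multpermute}, apply that lemma to obtain $f(\bsY)=c_m$ a.s.\ on each slice, and then argue that the constants $c_m$ coincide. Your explicit observation that $\hat{M}=\#\mathbf{Z}-K$ is $\sigma(\mathbf{Z})$-measurable and your discussion of the $\alpha\geq 2$ requirement are details the paper leaves implicit, but the underlying argument is the same.
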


\begin{proof}
Let $g(\mathbf{Z})=\mathbb{E}[ f(\bsY)\vert\mathbf{Z} ]$ then $\mathbb{E}[ |f(\bsY)-g(\mathbf{Z}) | |\hat{M}=m ] = 0$ for all $m$ such that $\mathbb{P} (\hat{M}=m )>0$. Since $\hat{M}$ is independent of $ (\bsY,\hat{Y}_{1:\infty} )$ and the random permutation matrix is itself independent of $ (\bsY,\hat{Y}_{1:\infty} )$ given $\hat{M}=m$, the law of $ (\bsY,\mathbf{Z} )$ conditioned on $\hat{M}=m$ satisfies the assumptions of \cref{lem:multpermute}. Thus, by \cref{lem:multpermute}, $\mathbb{E}[ |f(\bsY)-g(\mathbf{Z}) | |\hat{M}=m ] = 0$ implies $f(\bsY)=c_{m}$ almost surely for some constant $c_{m}$.
(It is clear that $c_{m}$ is independent of $m$.)
\end{proof}
The next results extends \cref{cor:multperm_clutter} to the setting where a sequence of vectors $ (\bsY_{1},\ldots,\bsY_{m} )$, with $\bsY_{i}\in\mathbb{Y}^{K}$, is observed indirectly through the sequence of vectors $ (\mathbf{Z}_{1},\ldots,\mathbf{Z}_{m} )$ where each $\mathbf{Z}_{i}$ is generated as in \cref{cor:multperm_clutter} by augmenting $\bsY_{i}$ with clutter and then randomly permuting it. 

\begin{corollary}
\label{cor:multperm_vec}
Let $(\bsY_{1},\ldots,\bsY_{m})$ be a sequence of random vectors with $\bsY_{i}\in\mathbb{Y}^{K}$. Let $ (\mathbf{Z}_{1},\ldots,\mathbf{Z}_{m} )$ be conditionally independent given $ (\bsY_{1},\ldots,\bsY_{m} )$, i.e.\ $\mathbb{P}_{\mathbf{Z}_{1:m}\vert\bsY_{1:m}}=\mathbb{P}_{\mathbf{Z}_{1}\vert\bsY_{1}}\ldots\mathbb{P}_{\mathbf{Z}_{m}\vert\bsY_{m}}$ and each $\mathbb{P}_{\mathbf{Z}_{i}\vert\bsY_{i}}$ is defined as in \cref{cor:multperm_clutter}. Assume
\begin{equation}
 (P_{\psi} )^{K+m}\gg\mathbb{P}_{\bsY_{1:m}}\gg (P_{\psi} )^{K+m}\label{eq:cor_multperm_vec}
\end{equation}
If $f(\bsY_{1:m})=\mathbb{E} [ f(\bsY_{1:m})\vert\mathbf{Z}_{1:m} ]$, then $f(\bsY_{1:m})$ is a constant almost surely. In particular, (\ref{eq:cor_multperm_vec}) holds if both $ (\bsY_{1},\ldots,\bsY_{m} )$ and $P_{\psi}$ have positively valued probability densities, i.e.\ $\bsp(\bsy_{1},\ldots,\bsy_{m})>0$ and $p_{\psi}(y)>0$, w.r.t.\ the dominating measures $\nu^{K+m}$ and $\nu$ for some measure $\nu$ on $\mathbb{Y}$.
\end{corollary}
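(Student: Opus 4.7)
The plan is to reduce to the single-block result \cref{cor:multperm_clutter} by conditioning on all but one block at a time, and then to combine the resulting per-coordinate constancy using the product-like structure of the joint support.

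By the Doob--Dynkin lemma, $f(\bsY_{1:m})=\mathbb{E}[f(\bsY_{1:m})\vert\mathbf{Z}_{1:m}]$ yields a measurable $g$ with $f(\bsY_{1:m})=g(\mathbf{Z}_{1:m})$ almost surely. Fix $i\in\{1,\ldots,m\}$, write $\bsY_{\neq i}:=(\bsY_{j})_{j\neq i}$, $\mathbf{Z}_{\neq i}:=(\mathbf{Z}_{j})_{j\neq i}$, and condition on $(\bsY_{\neq i},\mathbf{Z}_{\neq i})=(\bsy_{\neq i},\mathbf{z}_{\neq i})$. Because of the factorisation $\mathbb{P}_{\mathbf{Z}_{1:m}\vert\bsY_{1:m}}=\prod_{j}\mathbb{P}_{\mathbf{Z}_{j}\vert\bsY_{j}}$, conditioning on $\mathbf{Z}_{\neq i}$ beyond $\bsY_{\neq i}$ does not alter the conditional law of $(\bsY_{i},\mathbf{Z}_{i})$, which remains an instance of the single-block clutter/permutation model of \cref{cor:multperm_clutter}. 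The mutual absolute continuity hypothesis on $\mathbb{P}_{\bsY_{1:m}}$ descends to the assertion that $\mathbb{P}_{\bsY_{i}\vert\bsY_{\neq i}=\bsy_{\neq i}}$ is mutually absolutely continuous with $(P_{\psi})^{K}$ for a.e.\ $\bsy_{\neq i}$, so the hypotheses of \cref{cor:multperm_clutter} are met. Setting $\tilde{f}_{i}(\bsY_{i}):=f(\bsy_{1:i-1},\bsY_{i},\bsy_{i+1:m})$, the identity $f(\bsY_{1:m})=g(\mathbf{Z}_{1:m})$ becomes $\tilde{f}_{i}(\bsY_{i})=g(\mathbf{z}_{1:i-1},\mathbf{Z}_{i},\mathbf{z}_{i+1:m})$ conditionally, hence $\tilde{f}_{i}(\bsY_{i})=\mathbb{E}[\tilde{f}_{i}(\bsY_{i})\vert\mathbf{Z}_{i}]$, and \cref{cor:multperm_clutter} forces $\tilde{f}_{i}$ to be constant. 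Therefore for each $i$ there is a map $h_{i}$ with $f(\bsY_{1:m})=h_{i}(\bsY_{\neq i})$ almost surely.

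To convert these $m$ separate ``independence of coordinate $i$'' statements into joint constancy I would induct on $m$. The base case $m=1$ is immediate. For $m\geq 2$, let $\tilde{f}:=h_{1}$, viewed as a function of $\bsY_{2:m}$. For each $i\geq 2$, $h_{1}(\bsY_{2:m})=h_{i}(\bsY_{\neq i})$ a.s.; since the left-hand side does not depend on $\bsY_{1}$, a Fubini argument against the mutual absolute continuity of the joint produces $\tilde{h}_{i}$, depending only on $\bsY_{\{2,\ldots,m\}\setminus\{i\}}$, with $\tilde{f}=\tilde{h}_{i}$ a.s. The marginal of $\bsY_{2:m}$ inherits mutual absolute continuity with $(P_{\psi})^{K(m-1)}$, so the inductive hypothesis applied to $\tilde{f}$ on $m-1$ variables yields $\tilde{f}$, and therefore $f$, constant a.s.

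The main obstacle is not the per-block invocation of \cref{cor:multperm_clutter}, which is essentially bookkeeping of conditioning, but the combining step: gluing $m$ coordinate-wise constancy statements into joint constancy requires the product-like support of $\mathbb{P}_{\bsY_{1:m}}$ guaranteed by the two-sided absolute continuity hypothesis. Without it the per-coordinate pieces could not be reconciled, so the ``in particular'' positive-density clause is precisely what makes the final step go through.
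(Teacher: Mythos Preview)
Your proposal is correct and follows essentially the same route as the paper: first reduce to \cref{cor:multperm_clutter} one block at a time by conditioning (the paper conditions on $(\mathbf{Z}_i,\bsY_{\neq i})$ and uses the factorisation to drop $\mathbf{Z}_{\neq i}$, you condition on $(\bsY_{\neq i},\mathbf{Z}_{\neq i})$ directly; both yield $f(\bsY_{1:m})=h_i(\bsY_{\neq i})$ a.s.\ for every $i$), and then use the mutual absolute continuity with the product measure to pass from per-coordinate constancy to global constancy. The paper disposes of this last step by saying ``in a manner similar to the proof of \cref{lem:multpermute}; details omitted'', and your explicit induction on $m$ is a clean way to supply precisely those omitted details.
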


\begin{proof}
Due to the conditional independence assumption, $\mathbb{E} [ f(\bsY_{1:m})\vert\mathbf{Z}_{1},\bsY_{2:m} ] =\mathbb{E} [ f(\bsY_{1:m})\vert\mathbf{Z}_{1},\bsY_{2:m},\mathbf{Z}_{2:m} ]$ and since $f(\bsY_{1:m})=\mathbb{E} [ f(\bsY_{1:m})\vert\mathbf{Z}_{1:m} ]$, we have $f(\bsY_{1:m})=\mathbb{E} [ f(\bsY_{1:m})\vert\mathbf{Z}_{1},\bsY_{2:m} ]$ or 
\[
f(\bsY_{1},\bsy_{2:m})=\mathbb{E}[ f(\bsY_{1:m})\vert\mathbf{Z}_{1},\bsy_{2:m} ] \qquad\mathbb{P}_{\bsY_{1},\mathbf{Z}_{1}\vert\bsy_{2:m}}-\textrm{a.s.}
\]
for $\mathbb{P}_{\bsY_{2:m}}$ almost all $\bsy_{2:m}$. Invoking \cref{cor:multperm_clutter} we thus have $f(\bsY_{1},\bsY_{2:m})=g_{1}(\bsY_{2:m})$ a.s.\ for some function $g_{1}.$ Proceeding in this way we can similarly show 
\[
f(\bsY_{1:i-1},\bsY_{i},\bsY_{i+1:m})=g_{i}(\bsY_{1:i-1},\bsY_{i+1:m})
\]
for $i=1,\ldots,m$. We can now invoke assumption (\ref{eq:cor_multperm_vec}) to show that $f(\bsY_{1:m})$ must be a constant almost surely in a manner similar to the proof of \cref{lem:multpermute}; details omitted. The final statement of the corollary is straightforward.
\end{proof}

\subsubsection{Supporting results for the proof of \cref{lem:informationLoss} for deletion but no permutation uncertainty, i.e. $(\alpha=1, \beta>0)$}

The following result is an intermediate result intended to convey the main idea of the analysis while the actual full blown version is contained in \cref{lem:multidel} below.
\begin{lemma}
\label{lem:singledelete}Let $\bsY=(Y_{1},Y_{2})$, $D\in\{1,2\}$, $\bsY$ and $D$ are independent and $0<\mathbb{P}(D=1)<1$. Assume for any measurable set $C$ that $\mathbb{P} (Y_{1}\in C\vert Y_{2}\in C )<1$ whenever $0<\mathbb{P} (Y_{1}\in C )<1$. Then $f(\bsY)=\mathbb{E} [ f(\bsY)\vert\bsY_{D} ]$ implies $f(\bsY)=c$ almost surely for some constant $c$.
\end{lemma}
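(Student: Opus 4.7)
The plan is to first reduce the hypothesis $f(\bsY) = \bbE[f(\bsY)\mid\bsY_D]$ to a functional equation of the form $g(Y_1)=g(Y_2)$ almost surely, and then invoke the non-degeneracy assumption on the joint law of $(Y_1,Y_2)$ to force $g$ to be constant.

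\textbf{Step 1 (reduction via Doob--Dynkin).} The assumption $f(\bsY)=\bbE[f(\bsY)\mid\bsY_D]$ says that $f(\bsY)$ is $\sigma(\bsY_D)$-measurable. By the Doob--Dynkin lemma, there exists a measurable function $g:\bbY\to\bbR$ such that $f(\bsY)=g(\bsY_D)$ almost surely.

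\textbf{Step 2 (decoupling via independence of $D$).} Since $D$ is independent of $\bsY$ and $\bbP(D=1),\bbP(D=2)\in(0,1)$, conditioning the equality $f(Y_1,Y_2)=g(Y_D)$ on the event $\{D=1\}$ gives $f(Y_1,Y_2)=g(Y_1)$ on a set of positive probability; but since the joint law of $(Y_1,Y_2)$ on $\{D=1\}$ equals the unconditional law of $(Y_1,Y_2)$, this equality holds $\bbP_{\bsY}$-almost surely. The same argument on $\{D=2\}$ yields $f(Y_1,Y_2)=g(Y_2)$ almost surely. Combining the two gives $g(Y_1)=g(Y_2)$ almost surely.

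\textbf{Step 3 (contradiction via the non-degeneracy hypothesis).} I argue by contradiction that $g$ must be constant $\bbP_{Y_1}$-almost surely. If not, then there exists $c\in\bbR$ such that the set $C\defeq\{y\in\bbY : g(y)\leq c\}$ satisfies $0<\bbP(Y_1\in C)<1$. Since $g(Y_1)=g(Y_2)$ almost surely, the events $\{Y_1\in C\}$ and $\{Y_2\in C\}$ coincide up to a null set, so $\bbP(Y_1\in C\mid Y_2\in C)=1$, contradicting the assumption that $\bbP(Y_1\in C\mid Y_2\in C)<1$ whenever $0<\bbP(Y_1\in C)<1$. Hence $g$ is almost surely constant, and therefore $f(\bsY)=g(\bsY_D)$ is almost surely equal to that same constant.

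The argument is largely formal; the only subtle point is Step 2, where one must carefully justify that an equality holding on $\{D=k\}$ passes to an unconditional almost-sure equality using the independence of $D$ from $\bsY$. The hypothesis in Step 3 is engineered precisely to rule out the degenerate case where $Y_1$ and $Y_2$ are deterministically coupled on a nontrivial event, which would otherwise prevent $g(Y_1)=g(Y_2)$ a.s.\ from forcing $g$ to be constant.
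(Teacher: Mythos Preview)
Your proof is correct and follows essentially the same route as the paper: obtain a measurable $g$ with $f(\bsY)=g(\bsY_D)$, use independence of $D$ from $\bsY$ to deduce $f(\bsY)=g(Y_1)=g(Y_2)$ almost surely, and then derive a contradiction from a nontrivial preimage of $g$. The only cosmetic differences are that the paper phrases Step~2 via the identity $\bbE[|f(\bsY)-g(\bsY_D)|]=\bbP(D=1)\,\bbE[|f(\bsY)-g(Y_1)|]+\bbP(D=2)\,\bbE[|f(\bsY)-g(Y_2)|]=0$ rather than by conditioning, and in Step~3 uses an arbitrary measurable set $A=g^{-1}(B)$ rather than a sublevel set; neither difference is material.
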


\begin{remark}
The random variables $(Y_{1},Y_{2})$ can always be relabelled if the assumption is met by $\mathbb{P} (Y_{2}\in\cdot\vert Y_{1}\in\cdot )$. The assumption is to be interpreted to mean that if $Y_{1}$ is not concentrated in $C$ then the event $Y_{2}\in C$ does not imply $Y_{1}\in C$. This assures some independence between the random variables.
\end{remark}
\begin{remark}
The assumption is satisfied without the need of densities. For example the assumption is satisfied when $Y_{1}$ and $Y_{2}$ are independent. If $\bsY=(Y_{1},Y_{2})$ has a density, then the assumption is satisfied if $p(y_{1},y_{2})>0$. Indeed in this case $\mathbb{P} (Y_{1}\in C^{c},Y_{2}\in C )>0$ for all $C\subset\text{\ensuremath{\mathbb{Y}}}.$
\end{remark}

\begin{proof}[Proof of \cref{lem:singledelete}] The conditional expectation can always be expressed as follows: there exists a function $g$ such that $\mathbb{E} [ f(\bsY)\vert\bsY_{D} ] = g(\bsY_{D})$ or $g(\bsY_{D})$ is a version of the conditional expectation. Thus we have 
\begin{alignat*}{1}
0 & =\mathbb{E} [ |f(\bsY)-g(\bsY_{D}) | ] \\
 & =\mathbb{P}(D=1)\mathbb{E}[  |f(\bsY)-g(Y_{1}) | ] + \mathbb{P}(D=2)\mathbb{E} [ |f(\bsY)-g(Y_{2}) | ] 
\end{alignat*}
which follows from the independence of $D$ and $\bsY$. Since $0<\mathbb{P}(D=1)<1$, both expectation terms must be zero and hence $g(Y_{1})=g(Y_{2})$ almost surely: 
\[
\mathbb{E}[  |g(Y_{1})-g(Y_{2}) | ] \leq\mathbb{E} [  |f(\bsY)-g(Y_{1}) | ] +\mathbb{E} [  |f(\bsY)-g(Y_{2}) | ] =0.
\]
For any measurable set $B$, let $A=g^{-1}(B)$. Since $g(Y_{1})=g(Y_{2})$ almost surely, we have $\mathbb{P}(g(Y_{1})\in B)=\mathbb{P}(g(Y_{1})\in B,g(Y_{2})\in B)=\mathbb{P}(g(Y_{2})\in B)$ or that 
\begin{equation}
\mathbb{P}(Y_{1}\in A)=\mathbb{P}(Y_{1}\in A,Y_{2}\in A)=\mathbb{P}(Y_{2}\in A).\label{eq:singledel_eq1}
\end{equation}
We can now show that $g(Y_{1})=c$, for some constant $c$, almost surely. (Assuming the converse is true we will arrive at a contradiction.) If $g(Y_{1})=c$ does not hold almost surely then there exist a set $B$ such that $0<\mathbb{P}(g(Y_{1})\in B)<1$ and let $A=g^{-1}(B)$ and thus $0<\mathbb{P}(Y_{1}\in A)<1$. But (\ref{eq:singledel_eq1}) implies $\mathbb{P}(Y_{1}\in A\vert Y_{2}\in A)=1$ which violates the assumption.
\end{proof}

\begin{lemma}[Multiple deletions for $K>2$.]
\label{lem:multidel} Let $\bsY=(Y_{1},\ldots,Y_{K})$ be a random vector, $D\subseteq\{1,\ldots,K\}$ and $\bsY_{D}$ denote the thinned version where components not in $D$ have been removed. Assume $0<\mathbb{P}(D=\sigma)<1$ for all subsets $\sigma\subset \{ 1,\ldots,K \}$ such that $ |\sigma |=K-1$. Furthermore, assume the following:
\begin{itemize}
\item For each $i,j\in \{ 1,\ldots,K \} $, $i\neq j$, let $Z\subseteq Y_{1:K\setminus \{ i,j \} }$. If $\mathbb{P} ( (Y_{i},Z )\in A )<1$ and $\mathbb{I}_{A}(Y_{i},Z)\neq\mathbb{E} [ \mathbb{I}_{A}(Y_{i},Z)\vert Z ]$ then $\mathbb{P} ( (Y_{i},Z )\in A\vert (Y_{j},Z )\in A )<1$. (Here $f\neq g$ means $\mathbb{P} (f\neq g )>0$.) 
\end{itemize}
Then $f(\bsY)=\mathbb{E} [ f(\bsY)\vert\bsY_{D} ]$ implies $f(\bsY)=c$ almost surely for some constant $c$.
\end{lemma}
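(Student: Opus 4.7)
My plan is to exploit the following key observation: since $\bsY_{D}$ always takes values in $\bbY^{K-1}$, the factorisation lemma yields a single measurable function $g\colon \bbY^{K-1}\to\bbR$ with $\mathbb{E}[f(\bsY)\mid\bsY_{D}]=g(\bsY_{D})$. Using the independence of $D$ and $\bsY$ together with $\mathbb{P}(D=\sigma_{i})>0$ for every $\sigma_{i}=\{1,\ldots,K\}\setminus\{i\}$, the hypothesis $f=\mathbb{E}[f\mid\bsY_{D}]$ lifts to the $K$ almost-sure identities $f(\bsY)=g(Y_{-i})$ for $i=1,\ldots,K$, all involving the \emph{same} function $g$. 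This common-$g$ structure is the crucial feature, because the lemma's assumption compares evaluations of a single set $A$ at $(Y_{i},Z)$ and $(Y_{j},Z)$.

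I would then proceed by induction on $K$, the base case $K=2$ being \cref{lem:singledelete}. Fix a measurable $B$ with $0<\mathbb{P}(f(\bsY)\in B)<1$, set $A=g^{-1}(B)\subset\bbY^{K-1}$, and consider an adjacent pair $(i,i+1)$. The tuples $Y_{-i}$ and $Y_{-(i+1)}$ agree in every coordinate except the $i$-th, where they hold $Y_{i+1}$ and $Y_{i}$ respectively. Writing $Z=Y_{-\{i,i+1\}}$ and re-indexing $A$ as a subset $A'$ of $\bbY\times\bbY^{K-2}$ with the varying coordinate placed first, the identity $g(Y_{-i})=g(Y_{-(i+1)})$ becomes $\{(Y_{i+1},Z)\in A'\}=\{(Y_{i},Z)\in A'\}$ almost surely, and in particular $\mathbb{P}((Y_{i+1},Z)\in A'\mid (Y_{i},Z)\in A')=1$. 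The contrapositive of the lemma's assumption then forces $\mathbb{I}_{A'}(Y_{i+1},Z)$ to be $\sigma(Z)$-measurable, and ranging $B$ over a countable generating family yields that $f(\bsY)$ is $\sigma(Y_{-\{i,i+1\}})$-measurable for every $i\in\{1,\ldots,K-1\}$. Taking $i=1$ in particular gives $f=\tilde f(Y_{3:K})$ almost surely for some measurable $\tilde f\colon\bbY^{K-2}\to\bbR$.

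To close the induction I must verify that $\tilde f$ satisfies the lemma's hypotheses on the reduced vector $(Y_{3},\ldots,Y_{K})$. For each $j\in\{3,\ldots,K\}$, combining $f=g(Y_{-j})$ with $f=\tilde f(Y_{3:K})$ yields $\tilde f(Y_{3:K})=g(Y_{1},Y_{2},Y_{4},\ldots,\widehat{Y_{j}},\ldots,Y_{K})$ almost surely, and a second appeal to the assumption---using the common-$g$ structure exactly as in the preceding paragraph---converts this into the single-deletion conditional-expectation identity $\tilde f(Y_{3:K})=\mathbb{E}[\tilde f(Y_{3:K})\mid Y_{3:K}\setminus\{Y_{j}\}]$. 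The restriction of the assumption to the sub-tuple $(Y_{3},\ldots,Y_{K})$ is automatic, so the inductive hypothesis delivers $\tilde f\equiv c$ and hence $f\equiv c$. The hard part will be precisely this last step: it is where the common-$g$ structure of the first paragraph must be re-deployed to turn an adjacency-based measurability reduction into a genuine conditional-expectation identity on the smaller index set.
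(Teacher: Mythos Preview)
Your opening move---extracting a single function $g$ on $\bbY^{K-1}$ and deriving the $K$ almost-sure identities $f(\bsY)=g(\bsY_{-i})$---matches the paper's first step, modulo one slip: the hypotheses do \emph{not} force $\bsY_{D}$ to lie in $\bbY^{K-1}$ (other sizes may have positive probability). The paper handles this by writing $g$ as a family $(g_{k})_{k}$ on $\cup_{k}\bbY^{k}$ and then working only with $g_{K-1}$; this is a harmless fix.

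The substantive gap is in closing your induction. Your ``first appeal'' to the assumption compares $g(\bsY_{-i})$ with $g(\bsY_{-(i+1)})$, which differ in exactly one coordinate; the assumption then yields $f\in\sigma(Y_{-\{i,i+1\}})$ for each adjacent pair. But to feed the inductive hypothesis you need $\tilde f(Y_{3:K})=\bbE[\tilde f(Y_{3:K})\mid Y_{3:K\setminus\{j\}}]$ for every $j\geq 3$, i.e.\ $f\in\sigma(Y_{3:K\setminus\{j\}})$. Adjacent-pair information gives only $f\in\sigma(Y_{3:K})$ and $f\in\sigma(Y_{-\{j-1,j\}})$, and intersecting $\sigma$-algebras of overlapping sub-vectors does \emph{not} in general produce $\sigma(Y_{3:K\setminus\{j\}})$ without an independence-type property you have not established. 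Your ``second appeal---using the common-$g$ structure'' does not help as stated: the identity you write down, $\tilde f(Y_{3:K})=g(\bsY_{-j})$, equates two \emph{different} functions on vectors of \emph{different} lengths, so there is no single coordinate in which they differ and the assumption cannot be invoked in the same way. (For non-adjacent $a,b$ the tuples $\bsY_{-a}$ and $\bsY_{-b}$ differ in several positions, so the one-swap argument does not extend either.)

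The paper avoids induction altogether. Having obtained $g(\bsY_{\sigma})=g(\bsY_{\sigma'})$ a.s.\ for all $|\sigma|=|\sigma'|=K-1$, it argues directly: if $g(\bsY_{\sigma})$ is not constant, descend through sub-tuples of $\bsY_{\sigma}$ to find a \emph{minimal} pair $(Y_{i},Z)$ with $g(\bsY_{\sigma})=\bbE[g(\bsY_{\sigma})\mid Y_{i},Z]$ but $g(\bsY_{\sigma})\neq\bbE[g(\bsY_{\sigma})\mid Z]$. An auxiliary result (\cref{lem:multidel_sublem2}) then converts this into an indicator $\mathbb{I}_{A}(Y_{i},Z)$ with $0<\bbP((Y_{i},Z)\in A)<1$ and $\mathbb{I}_{A}(Y_{i},Z)\neq\bbE[\mathbb{I}_{A}(Y_{i},Z)\mid Z]$. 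Choosing $\sigma,\sigma'$ so that they swap $Y_{i}$ for the single missing index $Y_{j}$ gives $\bbP((Y_{i},Z)\in A\mid (Y_{j},Z)\in A)=1$, contradicting the assumption. The minimal-dependence step and the passage from $g$ to an indicator via \cref{lem:multidel_sublem2} are precisely the ingredients your proposal lacks.
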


\begin{remark}
\label{rem:multidel-rectangles} The assumption of \cref{lem:multidel}
is more easily understood as follows: for each $i,j\in \{ 1,\ldots,K \} $, $i\neq j$, let $Z\subseteq Y_{1:K\setminus \{ 1,j \} }$.
If $\mathbb{P} (Y_{i}\in A\vert Z\in B )<1$ then $\mathbb{P} (Y_{i}\in A\vert Z\in B,Y_{j}\in A )<1$

Unfortunately stating the assumption via (measurable) rectangle sets is not sufficient as the product sigma algebra is much richer than just measurable rectangles. In fact, if the set $A$ of $\mathbb{I}_{A}(Y_{i},Z)$ in the assumption of \cref{lem:multidel} is the measurable rectangle $\mathbb{I}_{A}(Y_{i})\mathbb{I}_{B}(Z)$ then the assumption reduces to that in this remark.
\end{remark}

\begin{remark}
\label{rem:strongerHypMultiDel}
The main assumption of \cref{lem:multidel} is satisfied if $\nu_{1}\times\cdots\times\nu_{K}\ll\mathbb{P}_{\bsY}\ll\nu_{1}\times\cdots\times\nu_{K}$ where $\nu_{i}$ are probability measures, i.e. $\mathbb{P}_{\bsY}$ and $\nu_{1}\times\cdots\times\nu_{K}$ are mutually absolutely continuous. (The proof of this fact uses \cref{lem:multidel_sublem1} and the fact that trace of a set is measurable; this is omitted.) 
\end{remark}

\begin{proof}[Proof of Remark \ref{rem:multidel-rectangles}] The condition $\mathbb{I}_{A}(Y_{i},Z)\neq\mathbb{E} [ \mathbb{I}_{A}(Y_{i},Z)\vert Z ]$
translates to
\eqns{
\mathbb{I}_{A}(Y_{i})\mathbb{I}_{B}(Z)\neq\mathbb{E}[ \mathbb{I}_{A}(Y_{i})\mathbb{I}_{B}(Z)\vert Z ]
}
and from \cref{lem:multidel_sublem1} this implies that for all measurable sets $C$ in the range of $Z$ 
\[
\mathbb{E}[ \mathbb{I}_{A}(Y_{i})\mathbb{I}_{B}(Z) ] \neq\mathbb{E}[ \mathbb{I}_{C}(Z) ].
\]
Taking $C=B$ this then implies $\mathbb{P} (Y_{i}\in A,Z\in B )<\mathbb{P} (Z\in B )$
and so 
\[
\mathbb{P} (Y_{i}\in A\vert Z\in B )<1.
\]
Clearly the probability $\mathbb{P} ( (Y_{i},Z )\in A\vert (Y_{j},Z )\in A )$ of the assumption of \cref{lem:multidel} for the rectangle is $\mathbb{P} (Y_{i}\in A\vert Y_{j}\in A,Z\in B ).$ 
\end{proof}

\begin{proof}[Proof of \cref{lem:multidel}]
The random variable $\bsY_{D}$ belongs to $\bbY^{\times}$, that is to the disjoint union $\cup_{k=0}^{K}\mathbb{Y}^{k}$ with $\mathbb{Y}^{0}\equiv\emptyset.$ Thus we can write
\begin{alignat*}{1}
0=\mathbb{E}[ |f(\bsY)-g(\bsY_{D}) | ] & =\sum_{i=0}^{K}\sum_{\sigma:|\sigma|=i}\mathbb{E}[  |f(\bsY)-g_{i}(\bsY_{\sigma}) | \mathbb{I}_{[D=\sigma]} ] \\
 & =\sum_{i=0}^{K}\sum_{\sigma:|\sigma|=i}\mathbb{E}[  |f(\bsY)-g_{i}(\bsY_{\sigma}) | ] \mathbb{P}(D=\sigma)
\end{alignat*}
where $g_{0}$ is a constant, $g_{i}:\mathbb{Y}^{i} \to \mathbb{R}$ are measurable functions and independence of $D$ and $\bsY$ has been invoked. If $\mathbb{P}(D=\emptyset)>0$ then it is trivial since this implies $\mathbb{E}[ |f(\bsY)-g_{0} | ] = 0$. So assume $\mathbb{P}(D=\emptyset)=0$.

Having assumed $0<\mathbb{P}(D=\sigma)<1$ for all subsets $\sigma\subset \{ 1,\ldots,K \}$ such that $ |\sigma |=K-1$, we focus on these terms only:
\begin{alignat*}{1}
\sum_{\sigma:|\sigma|=K-1}\mathbb{E}[  |f(\bsY)-g_{K-1}(\bsY_{\sigma}) |] \mathbb{P}(D=\sigma)
\end{alignat*}
which also implies 
\begin{equation}
g_{K-1}(\bsY_{\sigma})=g_{K-1}(\bsY_{\sigma'})\quad\text{or}\quad\mathbb{I}_{A}(\bsY_{\sigma})=\mathbb{I}_{A}(\bsY_{\sigma'})\quad\text{almost surely}\qquad\label{eq:multidel_eq1}
\end{equation}
for all $\sigma,\sigma'$ and $A=g_{K-1}^{-1}(B)$ for a measurable set $B$ in $\mathbb{R}$. For example, when $\sigma=(1,3,\ldots,K)$, $\sigma=(2,3,\ldots,K)$ and $Z= (Y_{3},\ldots,Y_{K} )$, we get 
\[
\mathbb{P} ( (Y_{1},Z )\in A )=\mathbb{P} ( (Y_{1},Z )\in A, (Y_{2},Z )\in A )=\mathbb{P} ( (Y_{2},Z )\in A ).
\]

Henceforth we refer to $g_{K-1}$ simply as $g$. We need to show that $g(\bsY_{\sigma})=c$, for some constant $c$, almost surely.  If this is not the case then there exists subsets of variables $Y_{i}\in\bsY_{\sigma}$, $Z\subset\bsY_{\sigma}$ and $Y_{i}\notin Z$ (recall $\sigma\subset \{ 1,\ldots,K \}$ with $ |\sigma |=K-1$) such that 
\begin{equation}
g(\bsY_{\sigma})=\mathbb{E}[ g(\bsY_{\sigma}) |Y_{i},Z ] \quad\text{a.s. and}\quad g(\bsY_{\sigma})\neq\mathbb{E}[ g(\bsY_{\sigma}) |Z ] \quad\text{a.s.}\qquad\label{eq:multidel_eq2}
\end{equation}
The interpretation is that $g(\bsY_{\sigma})$ can potentially be a function of the reduced set of variables $(Y_{i},Z)$ (as asserted by the first equality) but it must genuinely be a function of at least
the variable $Y_{i}$. 

For clarity and simplicity assume $i=1$ and $Z= (Y_{3},\ldots,Y_{K} )$. Consider the terms in the sum due to $\sigma=(1,3,\ldots,K)$ and $\sigma=(2,3,\ldots,K)$. By \cref{lem:multidel_sublem2} below, there exists a measurable set $A=g^{-1}(B)$ such that $0<\mathbb{P} (A )<1$ and $\mathbb{I}_{A}(Y_{1},Z)\neq\mathbb{E}[ \mathbb{I}_{A}(Y_{1},Z)\vert Z ]$.  But (\ref{eq:multidel_eq1}) implies $\mathbb{P}( (Y_{1},Z )\in A\vert (Y_{2},Z )\in A)=1$ which violates the main assumption of the lemma.
\end{proof}

The following two lemmas concerns the random variables declared in the statement of \cref{lem:multidel}

\begin{lemma} \label{lem:multidel_sublem1} $\mathbb{I}_{A}(Y_{i},Z)=\mathbb{E}[ \mathbb{I}_{A}(Y_{i},Z)\vert Z ]$ a.s.\ if and only if there exits a set $C$ in $\sigma(Z)$ such that $\mathbb{I}_{A}(Y_{i},Z)=\mathbb{I}_{C}$ almost surely. 
\end{lemma}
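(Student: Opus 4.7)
The plan is to prove the two directions separately, relying only on elementary properties of conditional expectation.

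For the forward direction, suppose $\mathbb{I}_A(Y_i,Z)=\mathbb{E}[\mathbb{I}_A(Y_i,Z)\mid Z]$ a.s. The right-hand side is $\sigma(Z)$-measurable by definition of conditional expectation, so the left-hand side is $\sigma(Z)$-measurable up to a null set. Since $\mathbb{I}_A(Y_i,Z)$ is $\{0,1\}$-valued, so is $\mathbb{E}[\mathbb{I}_A(Y_i,Z)\mid Z]$ a.s., and I would set
\eqns{
C \defeq \{\omega \st \mathbb{E}[\mathbb{I}_A(Y_i,Z)\mid Z](\omega)=1\}.
}
Because the conditional expectation is $\sigma(Z)$-measurable, $C\in\sigma(Z)$, and by construction $\mathbb{I}_C=\mathbb{E}[\mathbb{I}_A(Y_i,Z)\mid Z]=\mathbb{I}_A(Y_i,Z)$ almost surely.

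For the backward direction, suppose there exists $C\in\sigma(Z)$ such that $\mathbb{I}_A(Y_i,Z)=\mathbb{I}_C$ a.s. Then $\mathbb{I}_A(Y_i,Z)$ is $\sigma(Z)$-measurable (up to a null set), so it is its own conditional expectation given $Z$, which gives the desired equality.

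This is essentially a tautology about indicator functions and conditional expectations, so there is no substantive obstacle; the only thing worth flagging is the small observation that a $\{0,1\}$-valued random variable that equals its own conditional expectation must have a conditional expectation taking only the values $0$ and $1$ almost surely, which is what lets us realise it as an indicator of a set in $\sigma(Z)$.
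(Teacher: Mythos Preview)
Your proof is correct and follows essentially the same approach as the paper's: both define $C=\{F=1\}$ where $F=\mathbb{E}[\mathbb{I}_A(Y_i,Z)\mid Z]$, use that $F$ is $\{0,1\}$-valued a.s.\ (since it equals an indicator a.s.) to conclude $\mathbb{I}_C=F=\mathbb{I}_A(Y_i,Z)$ a.s., and treat the reverse implication as immediate. The paper just spells out the null-set decomposition of $\{\mathbb{I}_A(Y_i,Z)\neq F\}$ a bit more explicitly, but the argument is the same.
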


\begin{proof}
Let $F$ denote $\mathbb{E}[ \mathbb{I}_{A}(Y_{i},Z)\vert Z ]$ and let $C= \{ \omega:\thinspace F(\omega)=1 \} $. Clearly $C\in\sigma(Z)$ and we show $\mathbb{I}_{A}(Y_{i},Z)=\mathbb{I}_{C}$ almost surely. 

The set $\mathbb{I}_{A}(Y_{i},Z)\neq F$ has measure zero and is comprised of the disjoint sets $ \{ \omega:\thinspace\mathbb{I}_{A}(Y_{i},Z)=1,F\neq1 \}$, $ \{ \omega:\thinspace\mathbb{I}_{A}(Y_{i},Z)=0,F=1 \}$ and $ \{ \omega:\thinspace\mathbb{I}_{A}(Y_{i},Z)=0,F\neq0,F\neq1 \}$.  Thus the result follows since $\mathbb{I}_{C}$ and $\mathbb{I}_{A}(Y_{i},Z)$ differ precisely on the first two sets. (The reverse implication is obvious.)
\end{proof}

\begin{lemma}
\label{lem:multidel_sublem2} If $g(Y_{i},Z)\neq\mathbb{E}[ g(Y_{i},Z)\vert Z ]$ then there exists a measurable function $\mathbb{I}_{A}(Y_{i},Z)$ such that $0<\mathbb{P} ((Y_{i},Z)\in A )<1$ and $\mathbb{I}_{A}(Y_{i},Z)\neq\mathbb{E}[ \mathbb{I}_{A}(Y_{i},Z)\vert Z ]$.
\end{lemma}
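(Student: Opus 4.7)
The plan is to argue by contrapositive: if every indicator of the form $\mathbb{I}_A(Y_i,Z)$ with $0<\mathbb{P}((Y_i,Z)\in A)<1$ already coincides almost surely with its conditional expectation given $Z$, then the same will have to hold for $g(Y_i,Z)$, contradicting the hypothesis. The bridge between indicators and $g$ is provided by sub-level sets together with \cref{lem:multidel_sublem1}.

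First I would assume, for contradiction, that for every measurable $A$ (in the range of $(Y_i,Z)$) one has $\mathbb{I}_A(Y_i,Z)=\mathbb{E}[\mathbb{I}_A(Y_i,Z)\mid Z]$ almost surely whenever $0<\mathbb{P}((Y_i,Z)\in A)<1$. (Sets of trivial probability are harmless because their indicator is a.s.\ constant, hence a.s.\ equal to its conditional expectation.) For each rational $q$, apply this hypothesis to the sub-level set $A_q=\{(y,z):g(y,z)\leq q\}$; by \cref{lem:multidel_sublem1} there exists $C_q\in\sigma(Z)$ with $\mathbb{I}_{A_q}(Y_i,Z)=\mathbb{I}_{C_q}$ almost surely, i.e.\ the event $\{g(Y_i,Z)\leq q\}$ coincides modulo a null set $N_q$ with a set in $\sigma(Z)$.

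The second step is the reconstruction. Set $N=\bigcup_{q\in\mathbb{Q}}N_q$, which is still a null set, and define
\eqns{
\tilde{g}(\omega)\defeq\inf\{q\in\mathbb{Q}:\omega\in C_q\},
}
which is $\sigma(Z)$-measurable as the infimum of a countable family of $\sigma(Z)$-measurable indicators. For $\omega\notin N$ the equivalence $\omega\in C_q\Leftrightarrow g(Y_i,Z)(\omega)\leq q$ holds for every rational $q$, so $\tilde{g}(\omega)=g(Y_i,Z)(\omega)$. Hence $g(Y_i,Z)$ is almost surely equal to a $\sigma(Z)$-measurable random variable, which forces $\mathbb{E}[g(Y_i,Z)\mid Z]=g(Y_i,Z)$ almost surely and contradicts the standing hypothesis.

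The conclusion is that there must exist some $q\in\mathbb{Q}$ for which the set $A=A_q$ fails the equality $\mathbb{I}_A(Y_i,Z)=\mathbb{E}[\mathbb{I}_A(Y_i,Z)\mid Z]$. Such an $A$ automatically satisfies $0<\mathbb{P}((Y_i,Z)\in A)<1$, since otherwise $\mathbb{I}_A$ would be a.s.\ constant and the equality would hold trivially, so this $A$ is the required witness. The only delicate point, and what I would treat as the main obstacle, is to make sure that the null-set exception in the passage from $A_q$ to $C_q$ can be absorbed uniformly in $q$; restricting to rational thresholds makes the union of exceptional null sets countable and hence still null, which is what validates the $\sigma(Z)$-measurability of $\tilde{g}$ above.
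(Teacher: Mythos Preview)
Your argument is correct and follows the same contrapositive strategy as the paper: assume every indicator built from $g$ already equals its conditional expectation, invoke \cref{lem:multidel_sublem1} to replace each such indicator by a $\sigma(Z)$-set, and conclude that $g(Y_i,Z)$ itself is a.s.\ $\sigma(Z)$-measurable, which contradicts the hypothesis.

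The only difference is the technical device used to pass from indicators to $g$. The paper approximates $g$ by simple functions $g_n$, replaces each level set of $g_n$ by its $\sigma(Z)$-counterpart to obtain $\bar g_n$, and then passes to the limit via $\mathbb{E}[|g-\mathbb{E}[g\mid Z]|]=\lim_n\mathbb{E}[|g_n-\mathbb{E}[g_n\mid Z]|]=0$. You instead use the countable family of rational sub-level sets $A_q=\{g\le q\}$ and reconstruct $g$ directly as $\tilde g(\omega)=\inf\{q\in\mathbb{Q}:\omega\in C_q\}$. Your route is slightly more direct in that it avoids a limiting argument, and the restriction to rationals is exactly what makes the exceptional null set countable and hence negligible --- a point you correctly identify as the only place requiring care. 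One cosmetic remark: $\tilde g$ is not literally ``the infimum of a countable family of indicators''; its $\sigma(Z)$-measurability follows because $\{\tilde g<t\}=\bigcup_{q\in\mathbb{Q},\,q<t}C_q\in\sigma(Z)$.
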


\begin{proof}
The proof is complete once we show that $g(Y_{i},Z)\neq\mathbb{E}[ g(Y_{i},Z)\vert Z ]$ implies there exists a set $A'\in\sigma(g)$ such that $\mathbb{I}_{A'}\neq\mathbb{E}[ \mathbb{I}_{A'}\vert Z ]$, which in turn implies $0<\mathbb{P} (A' )<1$. (Here we write $\sigma(g)$, rather than $\sigma (g(Y_{i},Z) )$, for the smallest $\sigma$-algebra with respect to which the random variable $g(Y_{i},Z)$ is measurable.) Note that for every $A'\in\sigma(g)$ there exists a measurable set $B$ in $\mathbb{R}$ such that $A'= \{ \omega:(Y_{i}(\omega),Z(\omega))\in g^{-1}(B) \} $.

Assume that $\mathbb{I}_{A'}=\mathbb{E}[ \mathbb{I}_{A'}\vert Z ]$ almost surely for all $A'\in\sigma(g)$. Then by \cref{lem:multidel_sublem1}, for each $A'\in\sigma(g)$ there is a set $C$ in $\sigma(Z)$ such that $\mathbb{I}_{A'}=\mathbb{I}_{C}$ almost surely. This will violate $g(Y_{i},Z)\neq\mathbb{E}[ g(Y_{i},Z)\vert Z ]$ if $g(Y_{i},Z)=\mathbb{I}_{A'}$ and hence also if $g$ was a simple function; if we replace each indicator function (measurable set) of the simple function with its almost sure counterpart from $\sigma(Z)$ then it becomes $\sigma(Z)$ measurable. In the general case, the result may be established since $g$ may be approximated by a sequence of simple functions $ \{ g_{n} \}$ tending to $g$ and each simple function $g_{n}$ itself has a $\sigma(Z)$ measurable version, say $\overline{g}_{n}$, and $\overline{g}_{n}=g_{n}$. Thus $\mathbb{E}[ g_{n}\vert Z] =\mathbb{E}[ \overline{g}_{n}\vert Z] =g_{n}$. Letting $\overline{g}=\mathbb{E}[ g(Y_{i},Z)\vert Z ]$, we see that $\mathbb{E}[|g-\overline{g} |] = \lim\mathbb{E}\big[ |g_{n}-\mathbb{E}[g_{n}\vert Z] |\big]=0.$ 
\end{proof}

\section{Proof of \texorpdfstring{\cref{thm:informationLossPd}}{Theorem~\ref{thm:informationLossPd}}}
\label{proof:thm:informationLossPd}
The case where $K^*=1$ is first considered so that the number of observations $M_t$ at time $t$ can only be equal to zero or one. The joint probability of the observations and states becomes
\eqns{
\bar\bsp_{\bstheta}(\bsy_{1:n}, x_{0:n}) = \pi_{\theta}(x_0) \prod_{t=1}^n \big[ (1-p_{\D})^{1-M_t}(p_{\D}g_{\theta}(\bsy_t \given x_t))^{M_t} f_{\theta}(x_t \given x_{t-1}) \big],
}
where $\bsy_t$ is the empty sequence when $M_t = 0$. The size of $\bsy_t$ at any time $t$ can be made explicit in this expression for the sake of clarity as follows
\eqns{
\bar\bsp_{\bstheta}(\bsy_{1:n}, x_{0:n}, m_{1:n}) = \pi_{\theta}(x_0) \prod_{t=1}^n \big[ (1-p_{\D})^{1-m_t}(p_{\D}g_{\theta}(\bsy_t \given x_t))^{m_t} f_{\theta}(x_t \given x_{t-1}) \big].
}

Let $Y_t^{\epsilon}$ be a noisy version of the original observation $Y_t$ for any $t \geq 1$ so that the HMM $(X_t,Y^{\epsilon}_t)_t$ is equal in law to the HMM $(X_t,Y_t + \epsilon Z_t)_t$ where $(Z_t)_t$ is an i.i.d.\ sequence of random variables which common law is the uniform distribution over the ball of radius $1$ and centre $0$. A switching process $(s_t)_t$ is also introduced as follows: $s_t = 1$ when the target is detected and $s_t = 0$ otherwise. In order to study the Fisher information more easily, we introduce an alternative observation model where a detection failure at time $t$ is replaced by an observation $Y^{\epsilon}_t$ from the target. The law of this observation model is
\eqnsa{
\bar{p}_{\theta}^{\epsilon}(\tilde{y}_{1:n}, x_{0:n}, s_{1:n}) & = \pi_{\theta}(x_0) \prod_{t=1}^n \big[ p_{\D}^{s_t} (1-p_{\D})^{1-s_t} g_{\theta}(\tilde{y}_t \given x_t) f_{\theta}(x_t \given x_{t-1}) \big] \\
& = \pi_{\theta}(x_0) \prod_{t=1}^n \Big[ [p_{\D} g_{\theta}(y_t \given x_t)]^{s_t} [(1-p_{\D}) g_{\theta}(y^{\epsilon}_t \given x_t)]^{1-s_t} f_{\theta}(x_t \given x_{t-1}) \Big]
}
where $\tilde{y}_t = y_t$ if $s_t = 1$ and $\tilde{y}_t = y^{\epsilon}_t$ if $s_t = 0$. The quantity of interest is
\eqnsml{
\bar{p}_{\theta}^{\epsilon}(\tilde{y}_0, s_0 \given y_{-\infty:-1},y^{\epsilon}_{1:\infty}) = \\
[p_{\D} g_{\theta}(y_0 \given x_0)]^{s_0} [(1-p_{\D}) g_{\theta}(y^{\epsilon}_0 \given x_0)]^{1-s_0} \bar{p}_{\theta}(x_0 \given y_{-\infty:-1},y^{\epsilon}_{1:\infty}),
}
which we compare with the full-detection case
\eqns{
\bar{p}_{\theta}^{\epsilon}(y_0 \given y_{-\infty:-1},y^{\epsilon}_{1:\infty}) = g_{\theta}(y_0 \given x_0) \bar{p}_{\theta}(x_0 \given y_{-\infty:-1},y^{\epsilon}_{1:\infty}).
}
To justify the equivalence of the two observation model for the considered purpose, we can verify that the score $\nabla_{\theta} \log \bar{\bsp}_{\bstheta}(\bsy_0, m_0 \given y_{-\infty:-1})$ is equal to the score $\nabla_{\theta} \log \bar{p}_{\theta}^{\epsilon}(\tilde{y}_0, s_0 \given y_{-\infty:-1},y^{\epsilon}_{1:\infty})$ when $\epsilon \to \infty$. With the required modifications and after \cite[Theorem~5]{Dean2014}, it follows that the loss of information $I^{\epsilon}_{\loss}(\theta^*)$ when replacing the original observations by the $\epsilon$-perturbed ones can be expressed as
\eqnsml{
I^{\epsilon}_{\loss}(\theta^*) = \bar\bbE_{\theta^*} \Big[ \nabla_{\theta} \log \bar{p}_{\theta^*}(\bsY_0 \given \bsY_{-\infty:-1}, \bsY^{\epsilon}_{1:\infty}) \cdot \nabla_{\theta} \log \bar{p}_{\theta^*}(\bsY_0 \given \bsY_{-\infty:-1}, \bsY^{\epsilon}_{1:\infty})^{\tr} \Big] \\
- p_{\D} \bar\bbE_{\theta^*} \Big[ \nabla_{\theta} \log \bar{p}_{\theta^*}(\bsY_0 \given \bsY_{-\infty:-1}, \bsY^{\epsilon}_{1:\infty}) \cdot \nabla_{\theta} \log \bar{p}_{\theta^*}(\bsY_0 \given \bsY_{-\infty:-1}, \bsY^{\epsilon}_{1:\infty})^{\tr} \Big] \\
- (1-p_{\D}) \bar\bbE_{\theta^*} \Big[ \nabla_{\theta} \log \bar{p}_{\theta^*}(\bsY^{\epsilon}_0 \given \bsY_{-\infty:-1}, \bsY^{\epsilon}_{1:\infty}) \cdot \nabla_{\theta} \log \bar{p}_{\theta^*}(\bsY^{\epsilon}_0 \given \bsY_{-\infty:-1}, \bsY^{\epsilon}_{1:\infty})^{\tr} \Big].
}
Considering the limit $\epsilon \to \infty$, it follows that
\eqnsa{
I_{\loss}(\theta^*) & \defeq \lim_{\epsilon \to \infty} I^{\epsilon}_{\loss}(\theta^*) \\
& = (1-p_{\D}) \bar\bbE_{\theta^*} \Big[ \nabla_{\theta} \log \bar\bsp_{\theta^*}(\bsY_0 \given \bsY_{-\infty:-1}) \cdot \nabla_{\theta} \log \bar\bsp_{\theta^*}(\bsY_0 \given \bsY_{-\infty:-1})^{\tr} \Big].
}
In the multi-target case, it simply holds that the information loss is equal to $(1-p^*_{\D}) K^* I(\theta^*)$ since targets' detection are independent when the data association is known, which terminates the proof of the proposition.

\bibliographystyle{siamplain}
\bibliography{Identifiability}

\end{document}